\title{Almost sure scattering for the energy critical nonlinear wave equation}
\author{Bjoern Bringmann}
\begin{document}

\allowdisplaybreaks[1]

\renewcommand{\thepage}{\arabic{page}}
\maketitle
\let\thefootnote\relax\footnotetext{\emph{MSC2010}: 35L05, 42B20, 42B37}
\let\thefootnote\relax\footnotetext{\emph{Keywords}: nonlinear wave equation, probabilistic well-posedness, scattering, wave packet decomposition }

\begin{abstract}
\noindent
We study the defocusing energy-critical nonlinear wave equation in four dimensions. Our main result proves the stability of the scattering mechanism under random pertubations of the initial data. The random pertubation is defined through a microlocal randomization, which is based on a unit-scale decomposition in physical and frequency space.  In contrast to the previous literature, we do not require the spherical symmetry of the pertubation. \\ 
The main novelty lies in a wave packet decomposition of the random linear evolution. Through this decomposition, we can adaptively estimate the interaction between the rough and regular components of the solution. Our argument relies on techniques from restriction theory, such as Bourgain's bush argument and Wolff's induction on scales. 
\end{abstract}
\tableofcontents

\section{Introduction}
We consider the defocusing cubic nonlinear wave equation in four space dimensions, that is, 
\begin{equation}\label{intro:eq_nlw}
\begin{cases}
-\partial_{tt} u + \Delta u = u^3 \qquad (t,x)\in \reals\times \reals^4~,\\
u|_{t=0}  = u_0 \in \dot{H}^s(\rfour), \quad \partial_t u|_{t=0} = u_1 \in \dot{H}^{s-1}(\rfour)~. 
\end{cases}
\end{equation}
If \( u \) is a regular solution of \eqref{intro:eq_nlw}, then it conserves the energy
\begin{equation}\label{intro:energy}
E[u](t):= \int_{\rfour} \frac{|\nabla u(t,x)|^2}{2} + \frac{(\partial_t u(t,x))^2}{2} + \frac{u(t,x)^4}{4} \dx ~. 
\end{equation}
From the Sobolev embedding \( \dot{H}^1(\rfour) \hookrightarrow L^4(\rfour) \), it follows that the initial data has finite energy if and only if \( (u_0,u_1)\in \dot{H}^1(\rfour)\times L^2(\rfour) \). Thus, we also refer to \( \dot{H}^1(\rfour)\times L^2(\rfour)\) as the energy space. In addition to the energy conservation law, \eqref{intro:eq_nlw} obeys the scaling symmetry \( u(t,x) \mapsto u_\lambda(t,x) = \lambda u(\lambda t, \lambda x) \). Since the scaling leaves the energy invariant, the equation is called energy critical. Due to the positive sign in front of the potential term \( u^4 \), we call \eqref{intro:eq_nlw} defocusing. There also exists analogues of \eqref{intro:eq_nlw} with a power-type nonlinearity in any dimension \( d \geq 3 \). \\

The Cauchy problem for (deterministic) initial data in the energy space is well-understood.  We summarize the relevant results in the following theorem. 
\begin{thm}[{Global well-posedness and scattering \cite{BG99,Grillakis90,Grillakis92,Rauch81,SS93,SS94,Strauss68,Struwe88,Tao06b}}]\label{intro:thm_deterministic}
Let \( (u_0, u_1) \in \dot{H}^1(\rfour)\times L^2(\rfour) \). Then, there exists a maximal time interval of existence \( I \) and a unique solution \( u\colon I\times \rfour \rightarrow \reals \) of \eqref{intro:eq_nlw} such that \( u \in C_t^0 \dot{H}_x^1(I \times \rfour) ~ \mathsmaller{\bigcap} ~ L_ {t,\operatorname{loc}}^3 L_x^6(I\times \rfour) \) and \( \partial_t u \in C_t^0 L_x^2(I\times \rfour) \). Furthermore, we have that 
\begin{enumerate}
\item \( u \) is global, i.e., \( I = \reals \). \label{intro:item_deterministic_gwp}
\item \( u \) obeys a global space-time bound of the form
\begin{equation*}
\| u \|_{L_t^3 L_x^6(\reals\times \rfour)} \leq C(E[u_0,u_1])~. 
\end{equation*}
\item \( u \) scatters to a solution of the linear wave equation. Thus, there exist scattering states \( (u_0^\pm, u_1^\pm ) \in \dot{H}^1(\rfour)\times L^2(\rfour) \) s.t.
\begin{equation*}
\lim_{t\rightarrow \pm \infty} \| (u(t)-W(t)(u_0^\pm,u_1^\pm), \partial_t u(t) - \partial_t W(t) (u_0^\pm,u_1^\pm)) \|_{\dot{H}^1\times L^2} = 0~. 
\end{equation*}
Here, \(W(t)(u_0^\pm,u_1^\pm) = \cos(t|\nabla|) u_0^\pm + (\sin(t|\nabla|)/|\nabla|)u_1^\pm \) denotes the solution to the linear wave equation with initial data \( (u_0^\pm,u_1^\pm) \). 
\end{enumerate}
\end{thm}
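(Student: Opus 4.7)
The plan is to treat the three conclusions in the standard order: local theory, global space-time bound, and scattering as a corollary.

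For local well-posedness I would begin with the Strichartz estimates for the free wave equation in four dimensions; the pair $(3,6)$ is admissible at regularity $\dot{H}^1$, and so is the dual pair for the inhomogeneous term with nonlinearity $u^3$. Setting up the Duhamel formula
\[ u(t) = W(t)(u_0,u_1) - \int_0^t \frac{\sin((t-s)|\nabla|)}{|\nabla|} u(s)^3 \, ds, \]
a contraction argument in the ball of small $L^3_t L^6_x$ norm inside $C_t \dot{H}^1_x \cap L^3_t L^6_x$ gives local existence, uniqueness, and continuous dependence. Two standard byproducts follow: a small-data global theory with scattering (since $\|W(t)(u_0,u_1)\|_{L^3_t L^6_x}$ small forces the nonlinear solution to have comparable norm on all of $\mathbb{R}$), and the blowup criterion that a solution can be extended past time $T$ provided $\|u\|_{L^3_t L^6_x([0,T]\times\rfour)}$ stays finite. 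Thus claims (1) and (3) both reduce to the global bound in claim (2).

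The heart of the proof is therefore the a priori estimate $\|u\|_{L^3_t L^6_x(\reals\times\rfour)} \leq C(E[u_0,u_1])$. I would approach it in two stages. First, the conservation of energy \eqref{intro:energy} gives uniform control of $(u,\partial_t u)$ in $\dot{H}^1\times L^2$. Second, I would invoke a Morawetz-type monopole identity, in the spirit of Grillakis and Struwe, of the shape
\[ \int_{\reals} \int_{\rfour} \frac{u(t,x)^4}{|x|} \, dx \, dt \lesssim E[u_0,u_1], \]
and combine it with an energy-flux/finite-speed-of-propagation analysis to rule out concentration of energy at a point. In the spherically symmetric setting this directly prevents blowup and yields the global space-time bound; in the general case one either follows Grillakis' local refinement of Morawetz combined with a lightcone analysis, or proceeds through the Bahouri--Gérard profile decomposition plus a Kenig--Merle rigidity argument reducing a putative minimal-energy blowup scenario to a soliton-like object that is then excluded by Morawetz.

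Given the global space-time bound, scattering in claim (3) is routine: partition $\mathbb{R}$ into finitely many intervals on each of which $\|u\|_{L^3_t L^6_x}$ is small, apply Strichartz to the Duhamel integral $\int_t^{\pm\infty} \frac{\sin((t-s)|\nabla|)}{|\nabla|} u(s)^3\,ds$, and verify it is Cauchy in $\dot{H}^1\times L^2$ as $t\to\pm\infty$; the limit defines the scattering data $(u_0^\pm,u_1^\pm)$. The principal obstacle is clearly the second stage: proving absence of energy concentration for \emph{non-radial} large data. Historically this required either the delicate local Morawetz estimates of Grillakis or the later concentration-compactness machinery; either route is substantial and constitutes the main work of the theorem, whereas the Strichartz/contraction and scattering steps are comparatively mechanical.
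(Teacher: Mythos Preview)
The paper does not prove this theorem at all: it is stated as background and attributed entirely to the cited references \cite{BG99,Grillakis90,Grillakis92,Rauch81,SS93,SS94,Strauss68,Struwe88,Tao06b}, with no argument given. So there is no ``paper's own proof'' to compare your proposal against.

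That said, your sketch is a faithful outline of the classical route taken in that literature: Strichartz-based local theory with the blowup criterion, energy conservation plus a Morawetz/non-concentration argument (\`a la Struwe--Grillakis--Shatah--Struwe, or alternatively Bahouri--G\'erard profile decomposition with Kenig--Merle rigidity) to obtain the global $L^3_t L^6_x$ bound, and then scattering from the finiteness of that norm. Your identification of the non-concentration step as the substantive part is correct. If anything, note that for the \emph{defocusing} energy-critical wave equation in $d=4$ the result predates the Kenig--Merle framework and was settled by the direct Morawetz/lightcone analysis of Grillakis and Shatah--Struwe; the concentration-compactness route is an alternative but not the historical one reflected in the citations.
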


Global well-posedness and scattering results such as Theorem \ref{intro:thm_deterministic} are known for many defocusing dispersive partial differential equations, and hold for the energy-critical nonlinear Schrödinger equation \cite{Bourgain99,CKSTT08,VR07,Visan07}, the mass-critical nonlinear Schrödinger equation \cite{Dodson12,Dodson16b,Dodson16,KTV08,KVZ08}, the mass-critical generalized KdV \cite{Dodson17}, and the \( \dot{H}^{\frac{1}{2}} \)-critical radial nonlinear wave equation \cite{Dodson18}. \\
Since Theorem \ref{intro:thm_deterministic} provides a complete description of the Cauchy problem with initial data in the energy space, we now seek a similar result for initial data in a rough Sobolev space \( H_x^s \times H_x^{s-1} \), where \( s \in [0,1) \). However, since this leads to a scaling super-critical problem,  all of the above properties can fail. In fact, \cite{CCT03} proved that \eqref{intro:eq_nlw} exhibits norm inflation, which means that arbitrarily small data in \( H^s \times H^{s-1} \) can grow arbitrarily fast. More precisely, we have for all \( \epsilon > 0 \) that there exists Schwartz initial data \( (u_0,u_1) \) and a time \( 0 < t_\epsilon < \epsilon \)  such that 
\( \| (u_0,u_1)\|_{H^s\times H^{s-1}} < \epsilon \) and \( \| (u(t_\epsilon),\partial_t u(t_\epsilon)) \|_{H^s \times H^{s-1}} > \epsilon^{-1} \).  Using finite speed of propagation, one may then also construct solutions whose \( H^s \times H^{s-1} \)-norm blows up instantaneously.

\subsection{The random data Cauchy problem}

Many researchers in dispersive partial differential equations have recently examined whether blow-up behaviour, such as the norm-inflation described above, occurs for generic or only exceptional sets of rough initial data. To quantify this, one is quickly lead to random initial data. Indeed, one natural form of rough initial data is \( (u_0+ f_0^\omega, u_1+ f_1^\omega) \), where the functions \( (u_0,u_1) \in \dot{H}^1\times L^2 \) are regular and deterministic, while the functions \( (f_0^\omega,f_1^\omega)\in H^s\times H^{s-1} \) are rough and random. An analogue of Theorem \ref{intro:thm_deterministic} in this case would imply the stability of the scattering mechanism under a perturbation by noise. 

The literature on random dispersive partial differential equations is vast. We refer the interested reader to the survey \cite{BOP18}, and mention the related works \cite{BOP15, BOP17, Bourgain94,Bourgain96,BB14,Bringmann18b,BT08I,BT08II,CCMNS18,LM13,LM16,NORS12,NPS13,Pocovnicu17}. 
In the following discussion, we focus on the Wiener randomization \cite{BOP2014,LM13} of a function \( f \in H^s(\rd) \). Let \( \varphi \in C^\infty(\rd) \) be a smooth and symmetric function satisfying \( \varphi|_{[-3/8,3/8]^d}=1 \), \( \varphi|_{\rd\backslash [-5/8,5/8]^d} = 0 \), and \( \sum_{k\in \zd} \varphi(\xi-k) = 1 \) for all \( \xi \in \rd \). We then define the associated operator \( P_k \) by 
\begin{equation*}
\widehat{P_k f}(\xi) := \varphi(\xi-k) \widehat{f}(\xi)~. 
\end{equation*}
Since the translates \( \{ \varphi(\cdot-k)\}_k \) form a partition of unity, we have that 
\begin{equation}\label{intro:eq_wiener_decomposition}
f= \sum_{k\in \zd} P_k f ~, 
\end{equation}
which is called the Wiener decomposition of \( f \). The Wiener randomization is obtained by randomizing the coefficients in \eqref{intro:eq_wiener_decomposition}. Let \( I \subseteq \zd \) by an index set such that \( \zd = I~ \raisebox{0.25ex}{$\mathsmaller{\dot{\bigcup}}$}~ \{ 0 \} ~ \raisebox{0.25ex}{$\mathsmaller{\dot{\bigcup}}$} ~ (-I) \). Let \( \{ X_k \}_{k\in I \cup \{ 0 \}} \) be a sequence of symmetric, independent, and uniformly sub-gaussian random variables (see Definition \ref{prelim:def_sub_gaussian}). We set \( X_{-k} := \overline{X_k} \) for all \( k \in I \), and assume that \( X_0 \) is real-valued. Then, the Wiener randomization \( f^W\) is defined as
\begin{equation}\label{intro:eq_wiener_randomization}
f^W := \sum_{k\in \zd} X_k \cdot P_k f~. 
\end{equation}
The reason for introducing the set \( I \) is to preserve the real-valuedness of \( f \).  The Wiener randomization \( f^W\) is a random linear combination of functions with unit-scale frequency uncertainty, and therefore resembles a random Fourier series. We then examine the random data Cauchy problem

\begin{equation}\label{intro:eq_nlw_random}
\begin{cases}
-\partial_{tt} u + \Delta u = u^3 \qquad (t,x)\in \reals\times \reals^4~\\
u|_{t=0}  = u_0 + f_0^W, \quad \partial_t u|_{t=0} = u_1 +f_1^W
\end{cases}~. 
\end{equation}

We now seek an almost sure version of Theorem \ref{intro:thm_deterministic} for \eqref{intro:eq_nlw_random}. Before we summarize the recent results, let us sketch the overall strategy, which was developed by Pocovnicu in \cite{Pocovnicu17}.
 We let \( F:= \cos(t|\nabla|) f_0^W + (\sin(t|\nabla|)/|\nabla|) f_1^W \) be the solution of the linear wave equation with the rough and random initial data. We then define the nonlinear component \( v \) by  \( v := u - F\), and obtain the forced nonlinear wave equation
\begin{equation}\label{intro:eq_nlw_forced}
\begin{cases}
-\partial_{tt} v + \Delta v = (v+F)^3 \qquad (t,x)\in \reals\times \reals^4~,\\
v|_{t=0}  = u_0 , \quad \partial_t v|_{t=0} = u_1 . 
\end{cases}
\end{equation}
At the cost of introducing a rough forcing term, we have therefore removed the rough part of the initial data. This transformation is related to the Da Prato-Debussche trick \cite{PD02}. Due to the smoothing effect of the Duhamel integral, we hope to control the nonlinear component \( v \) in the energy space. The local well-posedness of \eqref{intro:eq_nlw_forced} follows readily from probabilistic Strichartz estimates (cf. \cite{BOP2014,LM13}) and a contraction mapping argument. Thus our main interest lies in the global well-posedness and the long-time behaviour of the solution. Using the deterministic well-posedness theorem and stability theory, it can be shown (cf. \cite{DLM17,Pocovnicu17}) that the solution to \eqref{intro:eq_nlw_forced} exists as long as the energy of \( v \) remains bounded. Of course, due to the forcing term in \eqref{intro:eq_nlw_forced}, the energy is no longer conserved. In addition, a global bound on the energy of \( v \) implies a global bound on the \( L_t^3 L_x^6\)-norm, and hence also implies scattering. A short calculation shows that 
\begin{equation}\label{intro:eq_energy_increment}
\frac{\mathrm{d}}{\dt} E[v](t) = \int_{\rfour} (v^3 - (v+F)^3) \partial_t v ~ \dx \approx -  3 \int_\rfour F v^2 \partial_t v\dx~. 
\end{equation} 
In the formula above, we have neglected terms that contain more than a single factor of \( F \), since they are simpler to estimate. Therefore, the remaining obstacle lies in the control of the right-hand side of \eqref{intro:eq_energy_increment}. With this overall strategy in mind, we summarize the recent literature. \\

In \cite{Pocovnicu17}, Pocovnicu proved the almost sure global existence of solutions for all \( s>0 \). Using a Gronwall-type argument and a probabilistic Strichartz estimate, \eqref{intro:eq_energy_increment} leads (at top order) to the growth estimate
\begin{equation}\label{intro:eq_pocovnicu_bound}
E[v](T) \lesssim E[v](0) \exp(C \| F \|_{L_t^1 L_x^\infty([0,T]\times \rfour)}) \lesssim E[v](0) \exp(C_\omega T^{\frac{1}{2}})~. 
\end{equation}
Since this prevents the finite time blow-up of the energy, this yields an analogue of Theorem \ref{intro:thm_deterministic}.\ref{intro:item_deterministic_gwp}. Similar theorems are also known in dimension five \cite{Pocovnicu17}, dimension three \cite{OP16}, and for the high-dimensional energy-critical nonlinear Schrödinger equation \cite{BOP17b}. \\
The bound \eqref{intro:eq_pocovnicu_bound}, however, is not sufficient to obtain global control on the energy of \( v \), and hence does not prove almost sure scattering. Assuming the regularity condition \( s > \frac{1}{2} \) and that the (deterministic) data \( (f_0,f_1) \) is spherically symmetric, Dodson, Lührmann, and Mendelson proved almost sure scattering  in \cite{DLM17}. In their argument, the energy increment is estimated by
\begin{equation}\label{intro:eq_dlm_bound}
\Big| \int_0^T \int_\rfour F v^2 \partial_t v \dx \dt \Big | \lesssim \| |x|^{\frac{1}{2}} F \|_{L_t^2 L_x^\infty([0,T]\times \rfour)} ~ \| |x|^{-\frac{1}{4}} v\|_{L_t^4 L_x^4([0,T]\times \rfour)}^2~  \|\partial_t v\|_{L_t^\infty L_x^2(\reals\times \rfour)}~. 
\end{equation}
The first factor is controlled using Khintchine's inequality and a square-function estimate, and heavily relies on the spherical symmetry of \( f_0 \) and \( f_1 \). The main novelty lies in the control of the second factor, and involves a double bootstrap argument in the energy and a Morawetz term. Under the bootstrap hypothesis, one can then control the second factor in \eqref{intro:eq_dlm_bound} by the square-root of the energy, and this eventually leads to a global bound. \\
The method of \cite{DLM17} has since been used in several related works. In \cite{DLM18}, Dodson, Lührmann, and Mendelson used local energy decay to improve the regularity condition to \( s > 0 \). After replacing the cubes in the Wiener randomization by thin annuli, the author proved almost sure scattering for radial data in dimension three \cite{Bringmann18}. The main new ingredient is an interaction flux estimate between the linear and nonlinear components of the solution. Finally, the almost sure scattering for the radial energy-critical nonlinear Schrödinger equation in four dimensions has been obtained in \cite{DLM18,KMV17}.

\begin{figure}[t!]
\begin{center}
\begin{tikzpicture}[scale=0.8]
\draw[xshift=0cm, yshift=-0.5cm, thick, fill opacity= 0.75, fill=green] (-2.5,0) -- (2.5,0) -- (2.5,1) -- (-2.5,1) -- (-2.5,0);
\draw[xshift=0cm, yshift=1.5cm, thick, fill opacity= 0.75, fill=green] (-2.5,0) -- (2.5,0) -- (2.5,1) -- (-2.5,1) -- (-2.5,0);
\draw[xshift=0cm, yshift=-2.5cm, thick, fill opacity= 0.75, fill=green] (-2.5,0) -- (2.5,0) -- (2.5,1) -- (-2.5,1) -- (-2.5,0);
\draw[xshift=0cm, yshift=0.5cm, thick, fill opacity= 0.75, fill=red] (-2.5,0) -- (2.5,0) -- (2.5,1) -- (-2.5,1) -- (-2.5,0);
\draw[xshift=0cm, yshift=-1.5cm, thick, fill opacity= 0.75, fill=red] (-2.5,0) -- (2.5,0) -- (2.5,1) -- (-2.5,1) -- (-2.5,0);
\draw[ultra thick, ->] (-3,0) -- (3,0);
\draw[ultra thick, ->] (0,-3) -- (0,3);
\node at (0,3.5) {\large $\xi$};
\node at (3.5,0) {\large $x$};

\begin{scope}[xshift=12.5cm]
\draw[thick, fill opacity= 0.5, fill=green] (-1/2,-1/2) -- (1/2,-1/2) -- (1/2,1/2) -- (-1/2,1/2) -- (-1/2,-1/2);
\draw[xshift=2cm, yshift=0cm, thick, fill opacity= 0.75, fill=green] (-1/2,-1/2) -- (1/2,-1/2) -- (1/2,1/2) -- (-1/2,1/2) -- (-1/2,-1/2);
\draw[xshift=-2cm, yshift=0cm, thick, fill opacity= 0.75, fill=green] (-1/2,-1/2) -- (1/2,-1/2) -- (1/2,1/2) -- (-1/2,1/2) -- (-1/2,-1/2);
\draw[xshift=-2cm, yshift=2cm, thick, fill opacity= 0.75, fill=green] (-1/2,-1/2) -- (1/2,-1/2) -- (1/2,1/2) -- (-1/2,1/2) -- (-1/2,-1/2);
\draw[xshift=-2cm, yshift=-2cm, thick, fill opacity= 0.75, fill=green] (-1/2,-1/2) -- (1/2,-1/2) -- (1/2,1/2) -- (-1/2,1/2) -- (-1/2,-1/2);
\draw[xshift=0cm, yshift=2cm, thick, fill opacity= 0.75, fill=green] (-1/2,-1/2) -- (1/2,-1/2) -- (1/2,1/2) -- (-1/2,1/2) -- (-1/2,-1/2);
\draw[xshift=0cm, yshift=-2cm, thick, fill opacity= 0.75, fill=green] (-1/2,-1/2) -- (1/2,-1/2) -- (1/2,1/2) -- (-1/2,1/2) -- (-1/2,-1/2);
\draw[xshift=2cm, yshift=-2cm, thick, fill opacity= 0.75, fill=green] (-1/2,-1/2) -- (1/2,-1/2) -- (1/2,1/2) -- (-1/2,1/2) -- (-1/2,-1/2);
\draw[xshift=2cm, yshift=2cm, thick, fill opacity= 0.75, fill=green] (-1/2,-1/2) -- (1/2,-1/2) -- (1/2,1/2) -- (-1/2,1/2) -- (-1/2,-1/2);
\draw[xshift=1cm, yshift=1cm, thick, fill opacity= 0.75, fill=green] (-1/2,-1/2) -- (1/2,-1/2) -- (1/2,1/2) -- (-1/2,1/2) -- (-1/2,-1/2);
\draw[xshift=-1cm, yshift=1cm, thick, fill opacity= 0.75, fill=green] (-1/2,-1/2) -- (1/2,-1/2) -- (1/2,1/2) -- (-1/2,1/2) -- (-1/2,-1/2);
\draw[xshift=1cm, yshift=-1cm, thick, fill opacity= 0.75, fill=green] (-1/2,-1/2) -- (1/2,-1/2) -- (1/2,1/2) -- (-1/2,1/2) -- (-1/2,-1/2);
\draw[xshift=-1cm, yshift=-1cm, thick, fill opacity= 0.75, fill=green] (-1/2,-1/2) -- (1/2,-1/2) -- (1/2,1/2) -- (-1/2,1/2) -- (-1/2,-1/2);
\draw[xshift=1cm, yshift=0cm, thick, fill opacity= 0.75, fill=red] (-1/2,-1/2) -- (1/2,-1/2) -- (1/2,1/2) -- (-1/2,1/2) -- (-1/2,-1/2);
\draw[xshift=1cm, yshift=2cm, thick, fill opacity= 0.75, fill=red] (-1/2,-1/2) -- (1/2,-1/2) -- (1/2,1/2) -- (-1/2,1/2) -- (-1/2,-1/2);
\draw[xshift=1cm, yshift=-2cm, thick, fill opacity= 0.75, fill=red] (-1/2,-1/2) -- (1/2,-1/2) -- (1/2,1/2) -- (-1/2,1/2) -- (-1/2,-1/2);
\draw[xshift=0cm, yshift=1cm, thick, fill opacity= 0.75, fill=red] (-1/2,-1/2) -- (1/2,-1/2) -- (1/2,1/2) -- (-1/2,1/2) -- (-1/2,-1/2);
\draw[xshift=0cm, yshift=-1cm, thick, fill opacity= 0.75, fill=red] (-1/2,-1/2) -- (1/2,-1/2) -- (1/2,1/2) -- (-1/2,1/2) -- (-1/2,-1/2);
\draw[xshift=-1cm, yshift=2cm, thick, fill opacity= 0.75, fill=red] (-1/2,-1/2) -- (1/2,-1/2) -- (1/2,1/2) -- (-1/2,1/2) -- (-1/2,-1/2);
\draw[xshift=-1cm, yshift=0cm, thick, fill opacity= 0.75, fill=red] (-1/2,-1/2) -- (1/2,-1/2) -- (1/2,1/2) -- (-1/2,1/2) -- (-1/2,-1/2);
\draw[xshift=-1cm, yshift=-2cm, thick, fill opacity= 0.75, fill=red] (-1/2,-1/2) -- (1/2,-1/2) -- (1/2,1/2) -- (-1/2,1/2) -- (-1/2,-1/2);
\draw[xshift=2cm, yshift=1cm, thick, fill opacity= 0.75, fill=red] (-1/2,-1/2) -- (1/2,-1/2) -- (1/2,1/2) -- (-1/2,1/2) -- (-1/2,-1/2);
\draw[xshift=2cm, yshift=-1cm, thick, fill opacity= 0.75, fill=red] (-1/2,-1/2) -- (1/2,-1/2) -- (1/2,1/2) -- (-1/2,1/2) -- (-1/2,-1/2);
\draw[xshift=-2cm, yshift=1cm, thick, fill opacity= 0.75, fill=red] (-1/2,-1/2) -- (1/2,-1/2) -- (1/2,1/2) -- (-1/2,1/2) -- (-1/2,-1/2);
\draw[xshift=-2cm, yshift=-1cm, thick, fill opacity= 0.75, fill=red] (-1/2,-1/2) -- (1/2,-1/2) -- (1/2,1/2) -- (-1/2,1/2) -- (-1/2,-1/2);
\draw[ultra thick, ->] (-3,0) -- (3,0);
\draw[ultra thick, ->] (0,-3) -- (0,3);
\node at (0,3.5) {\large $\xi$};
\node at (3.5,0) {\large $x$};
\end{scope}
\end{tikzpicture}
\end{center}
\caption*{\small{In the left image, we display a partition of the phase space \( \rd \times \rd \) into horizontal strips, which forms the basis of the Wiener randomization. In the right image, we display a partition of \( \rd \times \rd\) into cubes, which forms the basis of the microlocal randomization. A similar figure has been used in the author's previous work \cite[Figure 1]{Bringmann18}}.}
\caption{Partions of phase space}
\label{figure:randomization}
\end{figure}

\subsection{Main result and ideas}

The remaining open question is concerned with almost sure scattering for non-radial data. In order to state the main result of this paper, we first need to introduce a microlocal randomization. While the Wiener randomization is based on a unit-scale decomposition in frequency space, the microlocal randomization is based on a unit-scale decomposition in phase space (see Figure \ref{figure:randomization}). 

\begin{definition}[Microlocal randomization]\label{intro:def_microlocal}
Let \( \{ X_{k,l} \}_{k\in I \cup \{ 0 \}, l\in \zd} \) be a sequence of symmetric, independent, and uniformly sub-gaussian random variables. We set \( X_{-k,l} := \overline{X_{k,l}} \) for all \( k \in I \), and assume that \( X_{0,l} \) is real-valued. For any \( f \in H^s(\rd) \), we define its microlocal randomization \( f^\omega \) by 
\begin{equation}\label{intro:eq_microloca}
f^\omega(x):= \sum_{k,l\in \mathbb{Z}^d} X_{k,l} P_k (\varphi(\cdot-l) f)(x)~. 
\end{equation}
\end{definition}
The microlocal randomization is inspired by \cite{Murphy17}, which used a randomization in physical space. 

\begin{thm}[Almost sure scattering for the microlocal randomization] \label{main_theorem}
Let \( (u_0,u_1)\in \dot{H}^1(\rfour)\times L^2(\rfour) \), and let \( (f_0,f_1)\in H^s(\rfour)\times H^{s-1}(\rfour) \), where \( s > \frac{11}{12} \). Then, there exists a random maximal time interval of existence \( I \) and a solution \( u \colon I \times \rfour \rightarrow \reals \) of \eqref{intro:eq_nlw_random} such that
\begin{equation*}
u\in W(t)(f_0^\omega,f_1^\omega)+ \big(C_t^0 \dot{H}_x^1(I\times \rfour) ~ \mathsmaller{\bigcap}~ L_{t,\operatorname{loc}}^3L_x^6(I\times \rfour ) \big) ~~ \text{and} ~~ \partial_t u \in \partial_t W(t) (f_0^\omega,f_1^\omega) + C_t^0 L^2_x(I\times \rfour)~. 
\end{equation*}
Furthermore, we have that 
\begin{enumerate}
\item \( u \) is almost surely global, i.e., \( I = \reals \). 
\item \( u \) almost surely satisfies the global space-time bound
\( \| u \|_{L_t^3 L_x^6(\reals\times \rfour)} < \infty \)~.
\item \( u \) almost surely scatters to a solution of the linear wave equation. Thus, there exist random scattering states \( (u_0^\pm, u_1^\pm ) \in \dot{H}^1(\rfour)\times L^2(\rfour) \) s.t.
\begin{equation*}
\lim_{t\rightarrow \pm \infty} \| (u(t)-W(t)(u_0^\pm+f_0^\omega,u_1^\pm+f_1^\omega), \partial_t u(t) - \partial_t W(t) (u_0^\pm+f_0^\omega,u_1^\pm+f_1^\omega)) \|_{\dot{H}^1\times L^2} = 0~. 
\end{equation*}
\end{enumerate}
\end{thm}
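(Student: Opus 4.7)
The plan is to follow the Da Prato--Debussche style decomposition that has become standard in this line of work. Writing $F := W(t)(f_0^\omega,f_1^\omega)$ and $v := u - F$, the function $v$ satisfies the forced equation \eqref{intro:eq_nlw_forced} with deterministic initial data $(u_0,u_1) \in \dot{H}^1 \times L^2$. Local well-posedness for $v$ in the energy space follows from a contraction argument once probabilistic Strichartz-type estimates for $F$ are in place, and the deterministic stability theory behind Theorem \ref{intro:thm_deterministic} then upgrades any a priori global bound on $E[v]$ to global existence and to the space-time bound $\|v\|_{L^3_t L^6_x} < \infty$. Combined with analogous bounds on $F$, this yields the $L^3_t L^6_x$ bound on $u$ and, via Duhamel, scattering to random asymptotic states $W(t)(u_0^\pm + f_0^\omega, u_1^\pm + f_1^\omega)$. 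All that remains, then, is to control $E[v]$ globally.

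From \eqref{intro:eq_energy_increment} the dominant contribution to the energy increment is
\[
\frac{\mathrm{d}}{\mathrm{d}t} E[v](t) \approx -3 \int_{\mathbb{R}^4} F\, v^2\, \partial_t v \,\mathrm{d}x.
\]
In the radial setting of \cite{DLM17}, this is absorbed through the weighted bound $\||x|^{1/2}F\|_{L^2_t L^\infty_x}$ paired with a weighted Morawetz estimate on $v$, both of which require spherical symmetry. Without it, the weight $|x|^{1/2}$ is unavailable, so a genuinely new estimate on the triple interaction $F\,v^2\,\partial_t v$ is required. My strategy is to exploit the microlocal structure of $f^\omega$ in Definition \ref{intro:def_microlocal}: because the randomization is indexed by phase-space cubes $(k,l)$, each dyadic piece $F_N$ of $F$ admits a decomposition into wave packets supported in tubes of transverse width $\sim N^{-1/2}$ and length $\sim 1$, with independent (symmetric, sub-Gaussian) random amplitudes. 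The unit-scale physical cutoffs $\varphi(\cdot-l)$ keep distinct wave packets essentially orthogonal, so Khintchine--type square-function estimates give sharp control, with high probability, over geometric quantities such as the $L^p$-masses of $F_N$ on balls, plates, or single tubes.

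The main estimate to establish is of the form
\[
\Big|\int_0^T \!\!\int_{\mathbb{R}^4} F\, v^2\, \partial_t v \,\mathrm{d}x \,\mathrm{d}t\Big| \lesssim_\omega \,\langle T\rangle^{\delta} \,\|v\|_{X}^\alpha \,(\sup_{t \leq T} E[v](t))^{\beta},
\]
with $\alpha+2\beta < 2$ and $X$ a Strichartz-type norm controlled by $\sqrt{E[v]}$; this suffices to close a double bootstrap in the energy and a suitable Morawetz quantity on $v$, in the spirit of \cite{DLM17}. For each dyadic $F_N$ one expands in wave packets, discards rare events using sub-Gaussian tail bounds on the coefficients, and then estimates the pairing against $v^2\,\partial_t v$ by first grouping tubes that meet a common ball and then running Bourgain's bush argument to isolate a heavy tube surrounded by many incident ones. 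Wolff's induction on scales iteratively reduces the estimate to smaller spatial scales, converting cumulative Kakeya/Nikodym bounds for families of $N^{-1/2}$-tubes into the needed gain of a small power of $N^{-s}$ per wave packet. Summing over $N$ yields a net frequency gain precisely in the regime $s > \tfrac{11}{12}$.

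The main obstacle is, without question, this interaction estimate: it is where randomness, the geometry of the cone, and the regularity threshold all interact. The probabilistic gain coming from the microlocal randomization must exactly compensate both the combinatorial cost of counting tubes through a given ball (where Kakeya-type bounds in four dimensions enter) and the loss of the critical weight $|x|^{1/2}$ that drove the radial argument. I expect the value $s = \tfrac{11}{12}$ to appear as the exponent at which the Wolff induction, combined with the sub-Gaussian concentration of the wave packet coefficients, balances the worst case of a bush configuration aligned with the spatial support of $v$. Once this estimate is secured, the remainder of the proof---Gronwall-type control of $E[v]$, extraction of scattering states, removal of null probability events---is of a more standard flavor and parallels the treatments in \cite{Pocovnicu17,DLM17,DLM18}.
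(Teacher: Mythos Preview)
Your overall architecture---Da Prato--Debussche decomposition, reduction to a global bound on $E[v]$ via stability theory, wave packet decomposition of $F_N$, Bourgain's bush argument, and Wolff's induction on scales---matches the paper. But three concrete points diverge from the actual proof, and the second is a genuine gap.

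\textbf{Tube geometry.} The unit-scale frequency blocks $P_k$ with $|k|\sim N$ produce wave packets of width $\sim 1$ and length $\sim N$ on the time interval $[0,N]$ (see the tubes $T_{k,l}^\pm$ in Section~\ref{section:wp}), not tubes of width $N^{-1/2}$ and length $1$. The natural timescale at frequency $N$ is $T=N$, and the whole argument is built around this scale. Your $N^{-1/2}$-tube picture would correspond to a cap decomposition of the sphere, which is not what the microlocal randomization gives.

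\textbf{The role of the physical randomization.} You use the index $l$ only for orthogonality, but in the paper the physical randomization is essential and carries half of the regularity threshold. The time axis is split as $[0,\infty)=\bigcup_{n=0}^{\lfloor N^\theta\rfloor}[nN,(n+1)N)\cup[N^{1+\theta},\infty)$. The wave packet/bush/induction argument handles only the $\sim N^\theta$ short intervals and yields the constraint $s>\tfrac34+\theta$. On the long interval $[N^{1+\theta},\infty)$ the paper uses a probabilistic $L^1_tL^\infty_x$ decay estimate (Lemma~\ref{prob:lem_long_time_decay}, Corollary~\ref{prob:cor_long_time_decay}) that comes \emph{specifically} from the physical-space randomness and yields $s>1-\tfrac\theta2$. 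Balancing the two gives $\theta=\tfrac16$ and $s>\tfrac{11}{12}$. Without the long-time decay input, the wave packet argument alone does not close at any $s<1$: summing over unboundedly many intervals of length $N$ destroys the gain.

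\textbf{Flux instead of Morawetz; local cones instead of Gronwall.} The paper does not run a Morawetz/Gronwall bootstrap as in \cite{DLM17}. The companion quantity to the energy is a flux $\FluxNtilde[w]$, namely the $L^4$ mass of $w$ on thin neighbourhoods of light cones (Lemma~\ref{prelim:lem_averaged_flux}). This pairs with the bush argument because every bush lies on a single light cone, so $\|w\|_{L^4(\text{bush})}$ is controlled by the flux. The bootstrap is then closed not globally in time but on truncated $\ell^\infty$-cones $\KR$ via induction on the scale $R$ (Proposition~\ref{apriori:prop_induction_scales}); sending $R\to\infty$ gives the global energy bound directly, with no Gronwall step.
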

While Theorem \ref{main_theorem} is only proven for the microlocal randomization, the majority of our argument directly applies to the Wiener randomization. \\
The main novelty in this paper lies in the application of a wave packet decomposition. To illustrate this idea, fix some \( k \in \zd \) with \( \| k\|_\infty \sim N \), and assume that \( \widehat{f}_k(\xi) = N^{-s} \varphi(\xi-k) \). Then, \( f_k \) will essentially be unaffected by both the Wiener and microlocal randomizations, and hence forms an important example. From the method of non-stationary phase, it follows for all times \( t\in [0,N] \) that the evolution \( \exp(\pm it|\nabla|) f_k \) is concentrated in the ball \( |x\pm t k/\|k\|_2|\lesssim 1 \), and has amplitude \( \sim N^{-s} \). In space-time, we can therefore view the evolution as a tube, see Figure \ref{subfigure:single}. For larger times, the dispersion of the evolution becomes significant, and the physical localization deteriorates. The wave packet perspective also explains the effect of the frequency randomization on the evolution. In Figure \ref{subfigure:bush}, we display a bush (cf. \cite{Bourgain91}), which is a collection of wave packets intersecting at a single point. If all wave packets in the bush have comparable amplitudes and the data is deterministic, one expects that the \( L_t^\infty L_x^\infty \)-norm is proportional to the number of wave packets. For random data, however, the phases of the wave packets are all independent, and the central limit theorem predicts that the \( L_t^\infty L_x^\infty\)-norm should instead be proportional to the square-root of the number of wave packets. \\

\begin{figure}[t!]
\begin{subfigure}{0.5\textwidth}
\begin{center}
\includegraphics[height=5cm]{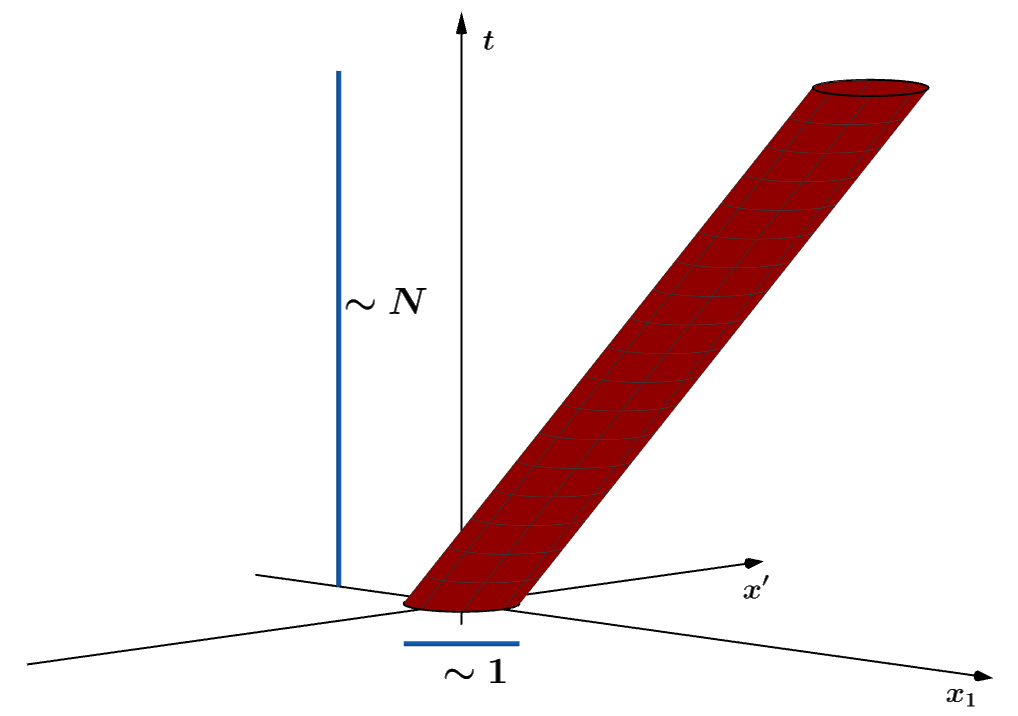}
\end{center}
\subcaption{Single wave packet}\label{subfigure:single}
\end{subfigure}
\begin{subfigure}{0.5\textwidth}
\includegraphics[height=5cm]{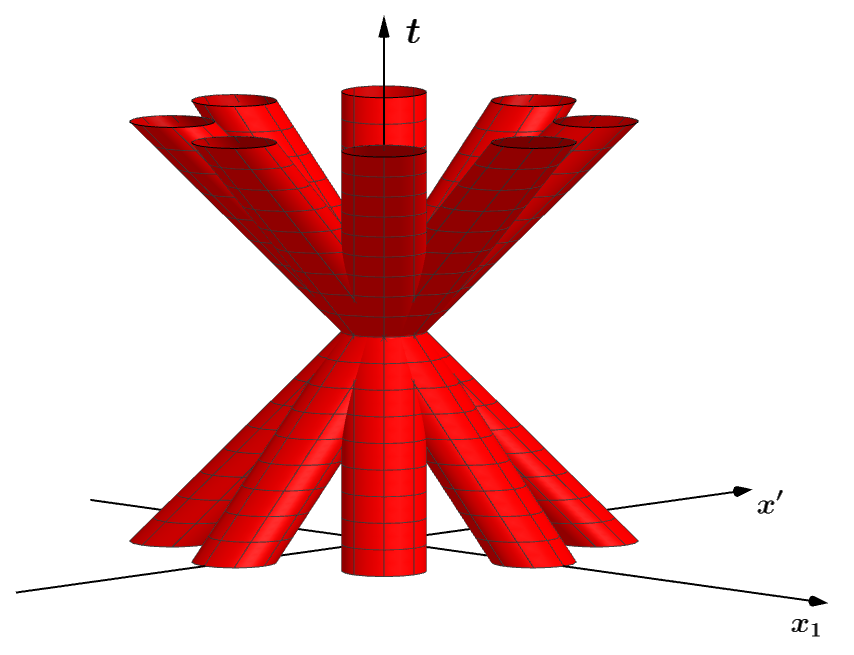}
\subcaption{Bush}
\label{subfigure:bush}
\end{subfigure}
\caption*{\small{In (a), we display the evolution  \( \exp(\pm it|\nabla|) f_k \) on the time-interval \( [0,N] \).  The space-time support can be viewed as a tube of length \( \sim N \) and width \( \sim 1\). The spatial center travels in a fixed direction at the speed of light, which has been normalized to \( 1 \).  Furthermore, the amplitude of the evolution is given by \( \sim N^{-s} \). In (b), we display a so-called bush, which is a collection of wave packets intersecting at a single point. }}
\caption{Wave packet heuristic}
\label{figure:single_wave_packet}
\end{figure}

The examples in Figure \ref{figure:single_wave_packet} also illustrates an important heuristic: The natural timescale for the randomized evolution at frequency \( N \) is \( T=N\). This differs from the natural timescale predicted by the (deterministic) bump-function heuristic, which is \( T= N^{-1} \). We therefore decompose the positive time-interval as 
\begin{align}\label{intro:eq_division_time}
[0,\infty) = \Big( \bigcup_{n=0}^{\lfloor N^{\theta} \rfloor} [nN,(n+1)N) \Big) ~\mathsmaller{\bigcup} ~ [N^{1+\theta},\infty)~, 
\end{align}
where \( \theta >0 \) is a parameter. Our argument then splits into two separate parts.\\

On the long-time interval \( [N^{1+\theta},\infty) \), we use the additional decay obtained through the physical randomization. The basic idea is that after such a long time, the linear evolution could only be concentrated through constructive interference of a large portion of the initial data, which is highly unlikely due to the physical randomness (see Figure \ref{figure:physical}). To make this rigorous, we prove an \( L_t^1 L_x^\infty([N^{1+\theta},\infty)\times \rfour)\)-bound on \( P_N F \), and this is sufficient to control the energy increment. This part of the proof requires the condition \( s > 1- \theta/2 \). \\

The majority of this paper focusses on time intervals such as \( [0,N) \). This part of the argument  does not rely on the physical randomness, and therefore also applies to the Wiener randomization. On this interval, we decompose the evolution into a family of wave packets, see Figure \ref{figure:multi_packet}. As can be seen from a single wave packet, we cannot (always) control the evolution in \( L_t^1 L_x^\infty \). Instead, we use the following dichotomy: Either \( F \) consists of only a few wave packets, in which case its support lies on a few light-cones, or it consists of many wave packets, in which case the \( L_t^\infty L_x^\infty\)-norm should be small.  \\

We now present a heuristic and simplified version of the main argument. 
In order to illustrate the ideas, let us first assume that all wave packets belong to a single frequency \( k\in \zd\). After a dyadic decomposition, we may further assume that all wave packets have amplitudes comparable to \( 2^m \). Using the same notation as in Section \ref{section:wp}, we denote the number of wave packets with this amplitude by \( \# \Am \) . Due to the \( L^2\)-orthogonality of the wave packets, we have that \( 2^m (\# \Am)^{\frac{1}{2}} \lesssim N^{-s}\). \\
In the case of only a few wave packets, we control the contribution on each tube separately. We have that 
\begin{equation*}
\Big| \int_{0}^{N} F_N v^2 \partial_t v \dx \dt \Big| \lesssim (\# \Am) N^{\frac{1}{2}} 2^m    \big( \sup_{\text{tubes}~T} \| v\|_{L_t^4 L_x^4(T)}^2 \big) \| \partial_t v \|_{L_t^\infty L_x^2([0,N)\times \rfour)} \lesssim N^{\frac{1}{2}} 2^m \# \Am  \sup_{t\in[0,N)} E[v](t)~. 
\end{equation*}
The supremum ranges over all tubes of length \( \sim N \), width \( \sim 1 \), and unit-speed direction inside  \( [0,N)\times \rfour \). Using a flux estimate and a bootstrap argument, we controll this supremum by the square-root of the energy. \\
In the case of many wave packets (with the same direction), we use that their supports are disjoint, and obtain that 
\begin{equation*}
\begin{aligned}
\Big| \int_{0}^{N} F_N v^2 \partial_t v \dx \dt \Big|& \lesssim N \| F_N \|_{L_t^\infty L_x^\infty([0,N)\times \rfour)}\| v \|_{L_t^\infty L_x^4([0,N)\times \rfour)}^2 \| \partial_t v \|_{L_t^\infty L_x^2([0,N)\times \rfour)} 
\lesssim N2^m  \sup_{t\in[0,N)} E[v](t)~. 
\end{aligned}
\end{equation*}
By combining both estimates, it follows that
\begin{align*}
\Big| \int_{0}^{N} F_N v^2 \partial_t v \dx \dt \Big| &\lesssim \min(N^{\frac{1}{2}} 2^m \# \Am, N2^m) \sup_{t\in[0,N)} E[v](t) 
\leq N^{\frac{3}{4}} 2^m (\# \Am)^{\frac{1}{2}}  \sup_{t\in[0,N)} E[v](t)~. 
\end{align*}
We insert the bound \( 2^m (\# \Am)^{\frac{1}{2}} \lesssim N^{-s} \), sum over \( N^\theta \) intervals, and arrive at the condition \( s> 3/4+\theta \). In order to match the conditions from the intervals \( [nN,(n+1)N) \) and the long-time interval \( [N^{1+\theta},\infty) \), we choose \( \theta= 1/6 \), and obtain the regularity condition \( s> 11/12\). \\

In order to remove the restriction to a single frequency, we need to consider both multiple directions and multiple scales.  For this, we rely on techniques from the literature on the Kakeya and restriction conjectures. In order to control multiple directions, we use Bourgain's bush argument \cite{Bourgain91}. The basic idea is to distinguish points which lie in multiple tubes from points which lie only in a few tubes. To this end, we group the wave packets into several bushes and a collection of (almost) non-overlapping wave packets (see Figure \ref{figure:multi_packet}). We then almost argue as for a single frequency, but also use that each bush lies on the surface of a light-cone, which is crucial for the flux estimate. In order to control multiple scales, we rely on Wolff's induction on scales strategy \cite{Wolff01}. To fix ideas, let us try to bound the energy increment \( E[v](N) -E[v](0) \). We have already described the estimates for wave packets of length greater than or equal to \( N \), but the space-time region \( [0,N] \times \rfour \) also contains many shorter wave packets. By induction on scales, we can already close the bootstrap argument at these shorter scales, which greatly reduces the complexity of the proof.  We postpone a more detailed discussion to the Sections \ref{section:wp} and \ref{section:a_priori}.

\begin{figure}[t!]
\begin{center}
\includegraphics[height=5cm]{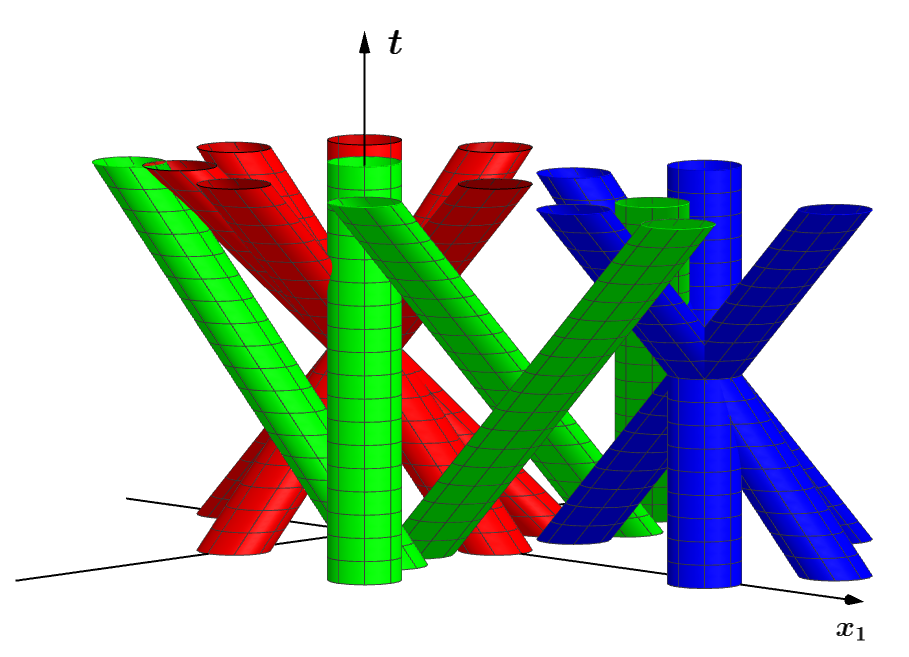}
\end{center}
\caption*{\small{We illustrate the wave packet decomposition of the linear evolution. We partition the wave packets into three groups: Two separate bushes (red and blue) and a collection of almost non-overlapping wave packets (green).}}
\caption{Wave packet decomposition}
\label{figure:multi_packet}
\end{figure}

\paragraph{Acknowledgements:} ~\\I want to thank my advisor Terence Tao for his invaluable guidance and support. In particular, he proposed the greedy selection algorithm in Section \ref{section:wp}. Furthermore, I want to thank Rowan Killip and Monica Visan for several interesting discussions. 
The figures in this paper have been created using TikZ and GeoGebra.

\section{Notation and preliminaries}\label{section:prelim}

For the rest of this paper, the positive integer \( d \) denotes the dimension of physical space. In the analysis of the nonlinear evolution, we will eventually specialize to \( d=4 \). Furthermore, we also fix positive absolute constants \( \delta, \theta, \) and \( \eta \). The parameter \( \delta \) will be used to deal with the spatial tails of the wave packets and certain summability issues. The parameter \( \theta \) is used in the division of time (see \eqref{intro:eq_division_time}). We will eventually choose \( \theta= 1/6\), but prefer to keep \( \theta \) as a free parameter until the end of the argument. Finally, \( \eta \) describes the size of the frequency truncated data, see Proposition \ref{wp:prop_smallness}. \\
If \( A, B \) are positive quantities, we write \( A \lesssim B \) if and only if there exist a constant \( C=C(\delta,\theta) \) such that \( A \leq CB \). Furthermore, most capital letters, such as \( N \), \( M \), and \( R \), will denote dyadic numbers greater than or equal to \( 1 \). \\
Finally, we define the Fourier transform \( \widehat{f} \) of an integrable function \( f \colon \rd \rightarrow \mathbb{C} \) by 
\begin{equation*}
\widehat{f}(\xi) := \frac{1}{(2\pi)^{\frac{d}{2}}} \int_{\rd} \exp(-ix\xi) f(x) \dx~. 
\end{equation*}
We now summarize a few basic results from probability theory, harmonic analysis, and dispersive partial differential equations.

\subsection{Probability theory}\label{section:probability}

We recall a few basic estimates for sub-gaussian random variables. For an accessible introduction, we refer the reader to \cite{Vershynin}. 

\begin{definition}[Sub-gaussian random variable]\label{prelim:def_sub_gaussian}
Let \( (\Omega,\mathcal{F},\mathbb{P}) \) be a probability space, and let \( X \colon (\Omega,\mathcal{F})\rightarrow \reals \) be a random variable. We then define the sub-gaussian norm by
\begin{equation}\label{prelim:eq_subgaussian}
\| X \|_{\Psi_2} := \sup_{p\geq 1} \frac{(\mathbb{E}[|X|^p])^{\frac{1}{p}}}{\sqrt{p}} 
\end{equation}
We call a random variable \( X \) sub-gaussian if and only if \( \| X \|_{\Psi_2} <\infty \). Furthermore, we call a family of random variables \( \{ X_i \}_{i\in I} \) uniformly sub-gaussian if and only if \( \sup_{i\in I} \| X_i \|_{\Psi_2} <\infty \). 
\end{definition}
The relationship to the Gaussian distribution may not be obvious from \eqref{prelim:eq_subgaussian}. However, it follows from \cite[Proposition 2.52]{Vershynin} that \eqref{prelim:eq_subgaussian} implies  
\begin{equation*}
\mathbb{P}(|X|> x) \leq 2 \exp\left( - c \frac{x^2}{\| X\|_{\Psi_2}^2} \right ) \qquad \forall  x > 0 ~. 
\end{equation*}
Many concentration inequalities for the sums of independent sub-gaussian random variables can be found in the literature. In the following, we mainly rely on Khintchine's inequality. 
\begin{lem}[{Khintchine's inequality, \cite[Corollary 5.12]{VershyninRMT} or \cite[Proposition 2.6.1 and Exercise 2.6.5]{Vershynin}}]\label{prelim:lem_khintchine}
Let  \( (X_j)_{j=1,\hdots,J} \) be a finite sequence of independent sub-gaussian random variables with zero mean. Then, it holds for all deterministic sequences \( (a_j)_{j=1,\hdots,J} \), and all \( p \geq 1 \), that 
\begin{equation}\label{prelim:eq_khintchine}
\left( \mathbb{E} \Big[  |\sum_{j=1}^J a_j X_j|^p \Big] \right)^{\frac{1}{p}} \lesssim \sqrt{p} \left( \max_{j=1,\hdots,J} \| X_j \|_{\Psi_2} \right) \left( \sum_{j=1}^J |a_j|^2 \right)^{\frac{1}{2}} 
\end{equation}
In particular, the sum \(  \sum_{j=1}^J a_j X_j \) is sub-gaussian. 
\end{lem}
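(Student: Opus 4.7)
The plan is to reduce Khintchine's inequality to the fact that the sub-gaussian class is stable under linear combinations of independent zero-mean variables, with the sub-gaussian norm controlled in an $\ell^2$ fashion. The argument proceeds through the moment generating function (MGF), which is the standard tool for passing from independence to tight tail/moment bounds.

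First I would establish the MGF characterization: if $X$ has zero mean and $\|X\|_{\Psi_2}=K<\infty$, then
\begin{equation*}
\mathbb{E}[\exp(\lambda X)] \leq \exp(C \lambda^2 K^2) \quad \text{for all } \lambda \in \mathbb{R}.
\end{equation*}
This is proved by Taylor expanding $\exp(\lambda X)$, using the moment bound $\mathbb{E}[|X|^p]\leq (K\sqrt{p})^p$ from \eqref{prelim:eq_subgaussian} together with Stirling to bound each term, and using the zero-mean hypothesis to kill the linear term (handling the small-$|\lambda|$ regime separately from $|\lambda|\gtrsim 1/K$, where one can use $|\lambda X|\leq \lambda^2 K^2 + X^2/K^2$ combined with the tail bound quoted from \cite[Proposition 2.52]{Vershynin}).

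Next, for $S := \sum_{j=1}^J a_j X_j$, independence gives
\begin{equation*}
\mathbb{E}[\exp(\lambda S)] = \prod_{j=1}^J \mathbb{E}[\exp(\lambda a_j X_j)] \leq \prod_{j=1}^J \exp\!\big(C \lambda^2 a_j^2 K^2\big) = \exp\!\Big(C \lambda^2 K^2 \sum_{j=1}^J a_j^2\Big),
\end{equation*}
where $K := \max_j \|X_j\|_{\Psi_2}$. Thus the MGF of $S$ is dominated by that of a centered Gaussian of variance $\sim K^2 \sum_j a_j^2$. Applying the standard Chernoff bound (optimizing in $\lambda$) yields
\begin{equation*}
\mathbb{P}(|S| > x) \leq 2 \exp\!\Big(-c x^2 / \big(K^2 \sum_j a_j^2\big)\Big),
\end{equation*}
which is equivalent (again by \cite[Prop.~2.52]{Vershynin}) to the moment bound
\begin{equation*}
(\mathbb{E}[|S|^p])^{1/p} \lesssim \sqrt{p} \, K \Big(\sum_{j=1}^J a_j^2\Big)^{1/2},
\end{equation*}
which is exactly \eqref{prelim:eq_khintchine}. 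The sub-gaussianity of $S$ follows directly from this moment bound and Definition~\ref{prelim:def_sub_gaussian}.

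The main technical point is the first step: establishing the MGF bound from the moment definition \eqref{prelim:eq_subgaussian} while correctly exploiting $\mathbb{E}[X]=0$ to eliminate the linear term in the Taylor expansion. Everything afterwards (independence, Chernoff, moment-from-tail conversion) is routine and quantitative. Since the lemma is stated with a reference to \cite[Corollary 5.12]{VershyninRMT} and \cite[Proposition 2.6.1, Exercise 2.6.5]{Vershynin}, the cleanest exposition is simply to cite these equivalences and perform the independence/Chernoff computation, which is the only step that actually uses the hypotheses of the lemma.
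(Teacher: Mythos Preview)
Your proposal is correct and follows the standard MGF/Chernoff route that underlies the cited results in Vershynin. The paper itself does not supply a proof of this lemma---it simply cites \cite[Corollary 5.12]{VershyninRMT} and \cite[Proposition 2.6.1, Exercise 2.6.5]{Vershynin}---so your sketch is essentially the argument one finds in those references.
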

In this paper, Khintchine's inequality will often be combined with Minkowski's integral inequality, which we recall below.
\begin{lem}[Minkowski's integral inequality]\label{prelim:lem_minkowski}
Let \( (X,\mu) \) and \( (Y,\nu) \) be two \( \sigma\)-finite measure spaces, and let \( 1 \leq p \leq q \leq \infty \). Then, we have for all measurable functions 
\( f \colon X\times Y \rightarrow \reals \) that 
\begin{equation*}
\| \| f(x,y) \|_{L^p(X)} \|_{L^q(Y)} \leq \| \| f(x,y) \|_{L^q(Y)} \|_{L^p(X)}~. 
\end{equation*}
\end{lem}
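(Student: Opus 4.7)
The plan is to reduce the inequality to the ordinary triangle inequality for the $L^r$-norm with $r := q/p$, and then to establish the latter by duality. First I would dispose of the trivial cases: if $p = q$ the claim is Fubini--Tonelli (in fact an equality), and if $q = \infty$ the right-hand side is a pointwise bound on the left-hand side integrand. We may also assume $f \geq 0$ by replacing $f$ with $|f|$ throughout, and assume both sides are finite (otherwise there is nothing to prove).

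For the main case $1 \leq p < q < \infty$, I would raise both sides to the $p$-th power. A direct computation shows that the $p$-th power of the left-hand side equals $\| \int_X f(x,\cdot)^p \, d\mu(x) \|_{L^{q/p}(Y)}$, while the $p$-th power of the right-hand side equals $\int_X \| f(x,\cdot)^p \|_{L^{q/p}(Y)} \, d\mu(x)$. Setting $u(x,y) := f(x,y)^p$ and $r := q/p \geq 1$, the statement thus reduces to the integral triangle inequality
\begin{equation*}
\Big\| \int_X u(x,\cdot) \, d\mu(x) \Big\|_{L^r(Y)} \leq \int_X \| u(x,\cdot) \|_{L^r(Y)} \, d\mu(x).
\end{equation*}

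To establish this, I would use the duality characterization of the $L^r$-norm: since $1 \leq r < \infty$, it equals the supremum of $\int_Y (\cdot) \, h \, d\nu$ over nonnegative $h \in L^{r'}(Y)$ of unit norm. For any such $h$, Tonelli's theorem (valid since $u, h \geq 0$ and both measure spaces are $\sigma$-finite) followed by Hölder's inequality in $y$ gives
\begin{equation*}
\int_Y \Big( \int_X u \, d\mu \Big) h \, d\nu = \int_X \int_Y u(x,y) h(y) \, d\nu(y) \, d\mu(x) \leq \int_X \| u(x,\cdot) \|_{L^r(Y)} \, d\mu(x).
\end{equation*}
Taking the supremum over admissible $h$ yields the desired bound.

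The argument is essentially bookkeeping; the only mild point that requires attention is the measurability of $x \mapsto \|u(x,\cdot)\|_{L^r(Y)}$ and of $y \mapsto \int_X u \, d\mu$, which is precisely where the $\sigma$-finiteness assumption enters via Tonelli. There is no substantial obstacle, and the proof is complete once the duality step is recorded.
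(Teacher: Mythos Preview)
Your proposal is correct and follows essentially the same route as the paper: both arguments raise to the $p$-th power to reduce the general inequality to the standard integral Minkowski inequality (the case $p=1$) with exponent $r=q/p\geq 1$. The only difference is that the paper cites a reference for the $p=1$ case, whereas you supply a self-contained proof of it via duality and H\"older; one minor quibble is that your remark ``assume both sides are finite'' is unnecessary, since the duality argument already covers the case where the left-hand side is a priori infinite.
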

The special case \( p=1 \) is the standard Minkowski inequality, and it can be found in most real analysis books (see e.g. \cite[Theorem 2.4]{LL01}). Since Lemma \ref{prelim:lem_minkowski} is central to many arguments in this paper, we prove the general statement from this special case. 
\begin{proof}
Since \( q/p \geq 1 \), we have that 
\begin{equation*}
\| \| f(x,y) \|_{L^p(X)} \|_{L^q(Y)} = \| \| f(x,y)^p \|_{L^1(X)} \|_{L^{\frac{q}{p}}(Y)}^{\frac{1}{p}} \leq \| \| f(x,y)^p \|_{L^{\frac{q}{p}}(Y)} \|_{L^1(X)}^{\frac{1}{p}} = \| \| f(x,y) \|_{L^q(Y)} \|_{L^p(X)}~.
\end{equation*}
\end{proof}

We will also need  a crude bound on the maximum of dependent sub-gaussian random variables 
\begin{lem}[{Suprema of dependent sub-gaussian random variables \cite[Exercise 2.5.10]{Vershynin}}]\label{prelim:lem_suprema_subgaussian}
Assume that \( (X_j)_{j=1,\hdots,J} \) are (possibly dependent) sub-gaussian random variables. Then, 
\begin{equation}\label{prelim:eq_suprema_subgaussian}
\mathbb{E}\left[ \max_{j=1,\hdots,J} |X_j| \right] \lesssim \sqrt{\log(2+J)} \max_{j=1,\hdots,J} \| X_j \|_{\Psi_2}~. 
\end{equation}
\end{lem}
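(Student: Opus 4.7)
The plan is to bound the expectation of the maximum by an $L^p$-moment for an optimally chosen $p$, use subadditivity of $\ell^p$-norms to dominate by a sum over $j$, and then invoke the sub-gaussian norm definition. Let $K := \max_{j=1,\ldots,J} \|X_j\|_{\Psi_2}$, which is finite by assumption.

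First I would note that for any $p \geq 1$, Jensen's inequality (applied to the concave function $x \mapsto x^{1/p}$) gives
\begin{equation*}
\mathbb{E}\left[\max_{j=1,\ldots,J} |X_j|\right] \leq \left(\mathbb{E}\left[\max_{j=1,\ldots,J} |X_j|^p\right]\right)^{1/p}.
\end{equation*}
The maximum of nonnegative quantities is trivially dominated by the sum, so
\begin{equation*}
\mathbb{E}\left[\max_{j=1,\ldots,J} |X_j|^p\right] \leq \sum_{j=1}^J \mathbb{E}\left[|X_j|^p\right].
\end{equation*}
At this stage no independence has been used, which is consistent with the statement allowing dependent $X_j$'s.

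Next I would invoke the definition of the sub-gaussian norm \eqref{prelim:eq_subgaussian}, which yields $(\mathbb{E}[|X_j|^p])^{1/p} \leq \sqrt{p}\, \|X_j\|_{\Psi_2} \leq \sqrt{p}\, K$ for every $j$. Combining the last three displays gives
\begin{equation*}
\mathbb{E}\left[\max_{j=1,\ldots,J} |X_j|\right] \leq J^{1/p} \sqrt{p}\, K
\end{equation*}
for all $p \geq 1$.

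The final step is to optimize in $p$. Choose $p := \log(2+J)$, which satisfies $p \geq \log 3 > 1$ for every $J \geq 1$ so the previous bound applies. With this choice $J^{1/p} \leq (2+J)^{1/\log(2+J)} = e$, while $\sqrt{p} = \sqrt{\log(2+J)}$, and we conclude
\begin{equation*}
\mathbb{E}\left[\max_{j=1,\ldots,J} |X_j|\right] \leq e\, \sqrt{\log(2+J)}\, K,
\end{equation*}
which is \eqref{prelim:eq_suprema_subgaussian}. There is no real obstacle here; the only conceptual point is the choice $p = \log(2+J)$, which trades the factor $J^{1/p}$ against $\sqrt{p}$ and produces the characteristic $\sqrt{\log J}$ loss typical of sub-gaussian maxima.
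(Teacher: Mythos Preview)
Your proof is correct and follows essentially the same approach as the paper's: bound the expectation by an $L^p$-moment (the paper phrases this as H\"older, you as Jensen, but it is the same inequality), replace the max by a sum, apply the sub-gaussian moment bound, and then choose $p=\log(2+J)$.
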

\begin{proof}
Let \(1 \leq p < \infty \). Using Hölder's inequality, we obtain that
\begin{align*}
\mathbb{E}\left[ \max_{j=1,\hdots,J} |X_j| \right]  \leq \left( \mathbb{E}\left[ \max_{j=1,\hdots,J} |X_j|^p \right] \right)^{\frac{1}{p}} 
\leq \left( \sum_{j=1}^J \mathbb{E} \left[ |X_j|^p \right] \right)^{\frac{1}{p}}
\leq J^{\frac{1}{p}} \sqrt{p} \max_{j=1,\hdots,J} \| X_j \|_{\Psi_2}~. 
\end{align*}
After choosing \( p= \log(2+J) \), we arrive at \eqref{prelim:eq_suprema_subgaussian}. 
\end{proof}

\subsection{Harmonic analysis}

Let \( N \in 2^{\mathbb{N}_0} \) and \( k \in \zd \). As in the introduction, we let \( \varphi \in C^\infty_c(\rd) \) be a smooth and symmetric function satisfying \( \varphi|_{[-\frac{3}{8},\frac{3}{8}]^d} =1\), \( \varphi|_{\rd \backslash [-\frac{5}{8},\frac{5}{8}]^d} = 0 \), and \( \sum_{k \in \zd} \varphi(\cdot - k ) = 1 \). We also define \( \psi(\xi) = \varphi(\xi) - \varphi(2\xi) \). 
Then, the re-centered Littlewood-Paley operators are defined as 

\begin{equation*}
\widehat{P_{N;k} f}(\xi) = \begin{cases} 
\begin{tabular}{ll}
 \( \psi\big(\frac{\xi-k}{N}\big) \widehat{f}(\xi) \) & if \( N > 1 \)\\
 \( \varphi(\xi-k)  \widehat{f}(\xi) \) & if \( N =1 \)
\end{tabular}
\end{cases} ~ . 
\end{equation*}
With this choice of \( \psi \), it holds that \( P_{N;k} P_{M;k} = 0 \) if \( M\geq 4N \) or \( N \geq 4M \). 
To simplify the notation, we also set  \( P_N := P_{N;0} \) and \( P_k := P_{1;k} \). Furthermore, we define the fattened Littlewood-Paley operators 
\begin{equation}\label{prelim:eq_fattened}
 \widetilde{P}_{N;k}:= \sum_{2^{-10} N\leq M \leq 2^{10} N}  P_{M;k}~.
 \end{equation}

\begin{lem}[Bernstein's inequalities]\label{prelim:lem_bernstein}
Let \( N \in 2^{\mathbb{N}_0} \), \( k \in \zd \), and \( 1 \leq p \leq q \leq \infty \). Then, we for all \( f \in L_x^p(\rd) \) that 
\begin{align}
\| P_{N;k} f \|_{L_x^q(\rd)} &\lesssim N^{d( \frac{1}{p} - \frac{1}{q} ) } \| P_{N;k} f \|_{L_x^p(\rd)}~, \label{prelim:eq_bernstein} \\
\| |\nabla| P_{N;0} f \|_{L_x^p(\rd)} &\lesssim N \| P_{N;0} f \|_{L_x^p(\rd)} ~. \label{prelim:eq_bernstein_derivative}
\end{align}
\end{lem}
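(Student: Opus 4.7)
The plan is to realize both estimates as convolutions with a reproducing kernel and then apply Young's inequality, which is the standard proof of Bernstein's inequalities.

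For the first bound \eqref{prelim:eq_bernstein}, I would begin by fixing a Schwartz function $\eta \in \mathcal{S}(\rd)$ whose Fourier transform $\widehat{\eta}$ equals $1$ on the support of $\psi$ (and of $\varphi$, which is larger, for the $N=1$ case), so that $\widehat{\eta}(\xi)\,\psi(\xi) = \psi(\xi)$. Setting $K_{N;k}(x) := e^{ikx} N^d \eta(Nx)$, a direct computation of the Fourier transform shows $\widehat{K_{N;k}}(\xi) = \widehat{\eta}((\xi-k)/N)$, which equals $1$ on the Fourier support of $P_{N;k}f$. Hence the reproducing identity
\begin{equation*}
P_{N;k} f = K_{N;k} \ast P_{N;k} f
\end{equation*}
holds. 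Applying Young's convolution inequality with exponent $r$ defined by $1 + 1/q = 1/r + 1/p$ gives
\begin{equation*}
\| P_{N;k} f \|_{L_x^q} \leq \| K_{N;k} \|_{L_x^r} \| P_{N;k} f \|_{L_x^p},
\end{equation*}
and a change of variables yields $\|K_{N;k}\|_{L_x^r} = N^{d(1-1/r)} \|\eta\|_{L_x^r} \lesssim N^{d(1/p - 1/q)}$, where the modulation $e^{ikx}$ drops out. This gives \eqref{prelim:eq_bernstein}.

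For the derivative estimate \eqref{prelim:eq_bernstein_derivative}, I would argue similarly but with the multiplier $|\xi|$ incorporated. Pick $\widetilde{\psi} \in C_c^\infty(\rd \setminus \{0\})$ with $\widetilde{\psi} = 1$ on the support of $\psi$ (and adjust for $N=1$ using a compactly supported $\widetilde\psi$ that equals $1$ on the support of $\varphi$, bounded away from $0$ is not needed here since we consider $|\nabla| P_{N;0}$ with the convention that this bound is for $N \geq 1$; the small-frequency contribution is harmless). Then $|\nabla| P_{N;0} f = m_N \ast P_{N;0} f$ with Fourier multiplier $|\xi| \widetilde{\psi}(\xi/N) = N \cdot h(\xi/N)$, where $h(\xi) := |\xi|\widetilde{\psi}(\xi)$ is smooth and compactly supported, hence has Schwartz inverse Fourier transform. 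By scaling, $\|m_N\|_{L^1_x} = N \|\check{h}\|_{L^1_x} \lesssim N$, and Young's inequality with $r=1$ yields \eqref{prelim:eq_bernstein_derivative}.

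There is no real obstacle here — the argument is textbook — but the one point that requires minor care is the $N=1$ endpoint, where the definition of $P_{N;k}$ uses $\varphi$ rather than $\psi$; the same scheme works after choosing the auxiliary bump $\eta$ so that $\widehat\eta$ equals $1$ on the (slightly larger) support of $\varphi$. All implicit constants depend only on a fixed choice of $\eta, \widetilde\psi$, so they are absolute.
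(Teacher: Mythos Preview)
Your argument is the standard textbook proof and is correct. The paper does not actually supply a proof of this lemma; it states Bernstein's inequalities as a known fact and only adds the remark that the constant in \eqref{prelim:eq_bernstein} is independent of \(k\) because the phase \(e^{ikx}\) does not affect \(L^p\)-norms --- exactly the point you make when computing \(\|K_{N;k}\|_{L^r}\).

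One small comment: your handling of the \(N=1\) case in \eqref{prelim:eq_bernstein_derivative} is a bit hand-wavy. There the symbol is \(|\xi|\varphi(\xi)\), which is not smooth at the origin, so the ``\(h\) is smooth and compactly supported, hence Schwartz inverse transform'' reasoning does not apply verbatim. The conclusion is still true --- the inverse Fourier transform of \(|\xi|\varphi(\xi)\) is smooth and decays like \(|x|^{-d-1}\), hence lies in \(L^1\) --- but if you want the \(p=1,\infty\) endpoints you should say this rather than ``the small-frequency contribution is harmless.''
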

We emphasize that the constant in  \eqref{prelim:eq_bernstein} is independent of \( k \in \zd \), since the phase \( \exp(ikx) \) does not affect the \( L_x^p\)-norms. \\

We also record the following standard consequence of Bernstein's inequality and the uncertainty principle.  

\begin{lem}\label{prelim:lem_inf_removal}
Let \( N \geq 1 \), let \( a,b \in \reals \), and assume that \( b\geq a+1/N \). Then, we have for all \( f \in L_x^2(\rd) \), all \( 1 \leq q < \infty\), and all \( 1 \leq p \leq \infty \), that 
\begin{equation*}
\| \exp(\pm it|\nabla|) P_N f \|_{L_t^\infty L_x^p([a,b]\times \rd)} \lesssim N^{\frac{1}{q}} \| \exp(\pm it|\nabla|) P_N f \|_{L_t^q L_x^p([a,b]\times \rd)}~. 
\end{equation*}
\end{lem}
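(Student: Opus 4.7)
The plan is to recognize the estimate as a Bernstein-type inequality in the time variable, powered by the intertwining identity $\partial_t \exp(\pm it|\nabla|) P_N f = \pm i |\nabla| \exp(\pm it|\nabla|) P_N f$. Writing $u(t, \cdot) := \exp(\pm it|\nabla|) P_N f$ and noting that $u(\tau, \cdot)$ is itself a $P_{N;0}$-localized function for every $\tau$, the spatial derivative Bernstein \eqref{prelim:eq_bernstein_derivative} of Lemma~\ref{prelim:lem_bernstein} yields
\[
\|\partial_\tau u(\tau, \cdot)\|_{L^p_x(\rd)} = \||\nabla| u(\tau, \cdot)\|_{L^p_x(\rd)} \lesssim N \|u(\tau, \cdot)\|_{L^p_x(\rd)}.
\]
This says that $u$ is slowly varying in $L^p_x$ on timescales of order $1/N$, and the hypothesis $b - a \geq 1/N$ is precisely what is needed to accommodate such a timescale inside $[a,b]$; it also matches the uncertainty principle, since $(b-a)N \ll 1$ would already render the desired inequality incompatible with H\"older.

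Concretely, I would first set $M := \|u\|_{L^\infty_t L^p_x([a,b]\times \rd)}$ and pick $t_0 \in [a,b]$ with $\|u(t_0, \cdot)\|_{L^p_x} \geq M/2$ (which exists by the definition of the supremum). Writing $u(t, \cdot) - u(t_0, \cdot) = \int_{t_0}^{t} \partial_\tau u(\tau, \cdot)\, d\tau$, applying Minkowski's integral inequality (Lemma~\ref{prelim:lem_minkowski}), and inserting the derivative bound above, I would obtain
\[
\|u(t, \cdot) - u(t_0, \cdot)\|_{L^p_x(\rd)} \leq C_0 N M |t - t_0|
\]
for some absolute constant $C_0$. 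Consequently $\|u(t, \cdot)\|_{L^p_x} \geq M/4$ whenever $|t - t_0| \leq 1/(4 C_0 N)$.

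Finally, thanks to $b - a \geq 1/N$, the set $I := [a,b] \cap [t_0 - 1/(4 C_0 N), t_0 + 1/(4 C_0 N)]$ is a sub-interval of Lebesgue measure $\gtrsim 1/N$ (even if $t_0$ lies near an endpoint of $[a,b]$, one side of the neighborhood has enough room). Integrating,
\[
\|u\|_{L^q_t L^p_x([a,b]\times \rd)}^q \geq \int_I \|u(t, \cdot)\|_{L^p_x}^q\, dt \gtrsim M^q N^{-1},
\]
and taking $q$th roots yields $M \lesssim N^{1/q} \|u\|_{L^q_t L^p_x([a,b]\times \rd)}$, as claimed. I do not foresee a genuine obstacle: the argument is essentially a one-dimensional Bernstein inequality in $t$ dressed up through the half-wave group, and the only minor point requiring care is verifying that the neighborhood of $t_0$ survives intersection with $[a,b]$, which is precisely what the hypothesis $b - a \geq 1/N$ guarantees. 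One may also reduce to the regime $M < \infty$ without loss of generality, for instance by observing that for $p \geq 2$ one has $M \lesssim N^{d(1/2 - 1/p)}\|P_N f\|_{L^2_x}$ by the spatial Bernstein \eqref{prelim:eq_bernstein}.
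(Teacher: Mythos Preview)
Your proposal is correct and follows essentially the same approach as the paper: both proofs exploit the identity $\partial_t u = \pm i|\nabla|u$ together with the spatial Bernstein inequality to control the time variation of $\|u(t,\cdot)\|_{L^p_x}$, and then average over a subinterval of length $\sim N^{-1}$ inside $[a,b]$. The only cosmetic difference is that the paper bounds $\|u(t_0,\cdot)\|_{L^p_x}$ for an \emph{arbitrary} $t_0$ directly in terms of $\|u\|_{L^q_t L^p_x(I)}$ (via $u(t_0)=u(t)\pm i\int_t^{t_0}|\nabla|u$, then averaging in $t$), whereas you first select a near-maximizer $t_0$ and show $\|u(t,\cdot)\|_{L^p_x}\geq M/4$ on a neighborhood of size $\sim N^{-1}$; the paper's formulation has the minor advantage of not needing the a priori finiteness of $M$ that you flag at the end.
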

The argument is essentially taken from \cite{Bringmann18}. 

\begin{proof}
Pick \( t_0 \in [a,b] \), and let \( I \) be any interval such that \( t_0 \in I \subseteq [a,b] \). For all \( t \in I \), it holds that 

\begin{equation*}
\exp(\pm it_0 |\nabla|) P_N f^\omega = \exp(\pm it |\nabla|) P_N f^\omega \pm i \int_{t}^{t_0} |\nabla| \exp(\pm it^\prime |\nabla|) P_N f^\omega \dtprime
\end{equation*}
From Bernstein's inequality, we obtain that
\begin{align*}
\| \exp(\pm it_0 |\nabla|) P_N f^\omega \|_{L_x^p(\rd)}  &\leq  \| \exp(\pm it |\nabla|) P_N f^\omega \|_{L_x^p(\rd)}  +  |I|^{1-\frac{1}{q}}  \| |\nabla| \exp(\pm it |\nabla|) P_N f^\omega \|_{L_t^{q} L_x^p(I\times \rd)} \\
&\lesssim \| \exp(\pm it |\nabla|) P_N f^\omega \|_{L_x^p(\rd)}  +  N |I|^{1-\frac{1}{q}}  \|  \exp(\pm it |\nabla|) P_N f^\omega \|_{L_t^{q} L_x^p(I\times \rd)}
\end{align*}
Taking the \( q\)-th power and averaging over all \( t \in I \), we obtain that 
\begin{equation*}
\| \exp(\pm it_0 |\nabla|) P_N f^\omega \|_{L_x^p(\rd)} \lesssim \left( |I|^{-\frac{1}{q}} + N |I|^{1-\frac{1}{q}} \right) \|  \exp(\pm it |\nabla|) P_N f^\omega \|_{L_t^{q} L_x^p(I\times \rd)}~. 
\end{equation*}
By choosing \( |I|= N^{-1} \), and taking the supremum over all \( t_0 \in [a,b] \), it follows that 
\begin{equation*}
\| \exp(\pm it |\nabla|) P_N f^\omega \|_{L_t^\infty L_x^p([a,b]\times \rd)} \lesssim N^{\frac{1}{q}} \| \exp(\pm it |\nabla|) P_N f^\omega \|_{L_t^{q} L_x^p([a,b]\times \rd)} 
\end{equation*}
\end{proof}

The following estimate, which appeared in the almost sure scattering problem for the nonlinear Schrödinger equation \cite{KMV17}, is useful in combination with Khintchine's inequality. 
\begin{lem}[{Square function estimate \cite[Lemma 2.8]{KMV17}}]\label{prelim:lem_square_function}
Let \( f \in L_x^2(\rd) \) and let \( \varphi \) be as above. Then, it holds that 
\begin{equation}\label{prelim:eq_square_function}
\sum_{k\in \zd} |P_k f(x)|^2 \lesssim (|\widecheck{\varphi}| * |f|^2)(x)~. 
\end{equation}
\end{lem}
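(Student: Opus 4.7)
The plan is to recognize that $P_k f(x)$ is, up to a universal constant, precisely a Fourier coefficient of the auxiliary function $h_x(y) := \widecheck{\varphi}(y) f(x-y)$. Using the conventions from the excerpt, a short computation gives
\[
P_k f(x) = (2\pi)^{-d/2} \int_{\reals^d} e^{iky} \widecheck{\varphi}(y) f(x-y)\,dy,
\]
so $|P_k f(x)|$ coincides, up to a constant depending only on $d$, with $|\widehat{h_x}(-k)|$. Summing $|\widehat{h_x}(k)|^2$ over $k \in \zd$ is then naturally handled by passing to the periodization
\[
H_x(y) := \sum_{n \in \zd} h_x(y + 2\pi n),
\]
whose Fourier coefficients on the torus $\mathbb{T}^d = \reals^d / (2\pi \zd)$ coincide (up to a constant) with the full-space Fourier transform $\widehat{h_x}$ evaluated at integer points. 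Parseval's identity on the torus then yields
\[
\sum_{k \in \zd} |P_k f(x)|^2 \lesssim \int_{[0, 2\pi]^d} |H_x(y)|^2\,dy.
\]

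To unfold the periodization, I would apply a weighted Cauchy--Schwarz inequality. Since $\sum_{n \in \zd}(1+|n|)^{-(d+1)} < \infty$, we obtain
\[
|H_x(y)|^2 \lesssim \sum_{n \in \zd} (1+|n|)^{d+1}\, |h_x(y + 2\pi n)|^2.
\]
Integrating over the fundamental domain $[0, 2\pi]^d$ and unfolding the sum over $n$ back to an integral on $\reals^d$ produces the intermediate bound
\[
\sum_{k \in \zd} |P_k f(x)|^2 \lesssim \int_{\reals^d} (1+|y|)^{d+1}\, |\widecheck{\varphi}(y)|^2\, |f(x-y)|^2\,dy,
\]
where the polynomial weight $(1+|y|)^{d+1}$ arises because a point $y \in [0, 2\pi]^d + 2\pi n$ satisfies $1+|n| \lesssim 1+|y|$.

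The final step uses the rapid decay of $\widecheck{\varphi}$ to absorb the polynomial weight: since $\varphi \in C_c^\infty(\reals^d)$, the inverse Fourier transform $\widecheck{\varphi}$ is Schwartz, so in particular $|\widecheck{\varphi}(y)| \lesssim (1+|y|)^{-(d+1)}$. This gives $(1+|y|)^{d+1}\, |\widecheck{\varphi}(y)|^2 \lesssim |\widecheck{\varphi}(y)|$, and substituting into the previous display yields the claimed estimate. The main conceptual point -- and the step where I would be most careful -- is the use of Parseval on the torus, which captures the orthogonality between different $P_k$ in a \emph{pointwise} form; a direct Cauchy--Schwarz in the $y$-integral defining $P_k f(x)$ would produce a constant $\|\widecheck{\varphi}\|_{L^1}$ for every $k$ and destroy the summability.
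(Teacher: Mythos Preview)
Your proof is correct. The computation $P_k f(x) = (2\pi)^{-d/2}\int e^{iky}\widecheck{\varphi}(y)f(x-y)\,dy$ is accurate under the paper's Fourier conventions, the periodization/Parseval step is the right mechanism for converting the sum over $k$ into an $L^2$-norm, and the weighted Cauchy--Schwarz together with the Schwartz decay of $\widecheck{\varphi}$ cleanly yields the convolution form $|\widecheck{\varphi}|*|f|^2$. One small technical point you might make explicit is that the argument is first carried out for Schwartz $f$ (so that the periodized sum $H_x$ converges absolutely and Parseval applies without issue) and then extended to $f\in L^2$ by density and Fatou.

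As for the comparison: the paper does not supply its own proof of this lemma --- it is quoted directly from \cite[Lemma~2.8]{KMV17} without argument --- so there is nothing in the paper to compare against. Your approach via periodization and Parseval on the torus is essentially the standard route to this pointwise square-function bound, and is in the same spirit as the cited reference.
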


In addition to the dyadic decomposition in frequency, we also need a dyadic decomposition in physical space. To avoid confusion, we denote the cut-off function in physical space by \( \chi \). More precisely, we set 
\( \chi_1(x):= \varphi(x) \) and \( \chi_L(x):= \psi(x/L) \), where \( L \geq  2 \). 
\begin{lem}[Mismatch estimates]\label{prelim:lem_mismatch}
Let \( 1 \leq p \leq \infty \) and \( M, N,L \geq 1 \). We further assume the mismatch conditions \( \max(M/N,N/M) \geq 8 \) and \( L \geq 8 \). Then, it holds for all absolute constants \( D  > 0 \) that 
\begin{align}
\| \chi_1 P_M \chi_L \|_{L_x^p \rightarrow L_x^p} \lesssim_D (ML)^{-D} ~ \label{prelim:eq_mismatch_physical}~,\\
\| P_N \chi_1 P_M \|_{L_x^p\rightarrow L_x^p} \lesssim_D (NM)^{-D}~.\label{prelim:eq_mismatch_frequency}
\end{align}
\end{lem}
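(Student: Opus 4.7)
Both estimates reduce to pointwise kernel bounds combined with Schur's test. Recall that $P_M$ is a convolution operator whose Fourier symbol is either $\psi(\cdot/M)$ (for $M > 1$) or $\varphi(\cdot)$ (for $M = 1$), both smooth and compactly supported. Hence the kernel $K_M$ satisfies a Schwartz-type bound
\[
|K_M(z)| \lesssim_D M^d \langle M z \rangle^{-D}, \qquad \text{any } D > 0,
\]
and similarly for $K_N$.

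For \eqref{prelim:eq_mismatch_physical}, the integral kernel of $\chi_1 P_M \chi_L$ is $\chi_1(x) K_M(x-y) \chi_L(y)$. On its support one has $|x| \lesssim 1$ and $|y| \sim L$, and the assumption $L \geq 8$ forces $|x-y| \gtrsim L$. Taking $D$ large, the decay of $K_M$ splits as
\[
|K_M(x-y)| \lesssim_D (ML)^{-D} \cdot M^d \langle M(x-y)\rangle^{-d-1},
\]
whose second factor is integrable uniformly in $x$ (or $y$) after the substitution $u = M(x-y)$. Schur's test then yields $\|\chi_1 P_M \chi_L\|_{L^p \to L^p} \lesssim_D (ML)^{-D}$.

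For \eqref{prelim:eq_mismatch_frequency}, assume first that $N \geq 8M$. The key input is the orthogonality $P_N P_M = 0$, a consequence of the hypothesis and the support properties of $\psi$. Taylor-expanding the smooth cutoff $\chi_1 = \varphi$ about $x$ to order $K$ gives
\[
\chi_1(y) = \sum_{|\alpha| < K} \frac{(y-x)^\alpha}{\alpha!} \partial^\alpha \chi_1(x) + R_K(x,y), \qquad |R_K(x,y)| \lesssim_K |x-y|^K,
\]
with the (global) remainder bound following from the smoothness of $\chi_1$. Substituting this into $P_N \chi_1 P_M f(x) = \int K_N(x-y)\chi_1(y) (P_M f)(y)\, dy$, each polynomial contribution equals the action on $P_M f$ of a Fourier multiplier with symbol $(-i/N)^{|\alpha|} (\partial^\alpha \psi)(\xi/N)$; this symbol is supported in the same annulus $\{|\xi| \sim N\}$ as $\psi(\xi/N)$, and therefore annihilates $P_M f$ just as $P_N$ itself does. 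Only the remainder survives, and the pointwise bound
\[
|x-y|^K |K_N(x-y)| \lesssim_{K} N^{-K}\, N^d \langle N(x-y)\rangle^{-d-1},
\]
combined with Schur's test and $\|P_M\|_{L^p \to L^p} \lesssim 1$, yields $\|P_N \chi_1 P_M\|_{L^p \to L^p} \lesssim_K N^{-K}$. Since $N \geq 8M$ gives $N^{-K} \lesssim (NM)^{-K/2}$, choosing $K = 2D$ finishes this case.

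The opposite case $M \geq 8N$ is handled by duality: $\psi$ is real and symmetric, so $P_N$ is self-adjoint on $L^2$; since $\chi_1$ is real-valued, $(P_N \chi_1 P_M)^* = P_M \chi_1 P_N$ and hence $\|P_N \chi_1 P_M\|_{L^p \to L^p} = \|P_M \chi_1 P_N\|_{L^{p'} \to L^{p'}}$, reducing to the previous step. The main technical point is the second estimate, where one must verify that every polynomial contribution produced by the Taylor expansion remains spectrally at scale $N$ and is therefore killed by $P_M$; aside from this observation, both bounds are routine kernel-decay computations.
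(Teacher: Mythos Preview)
Your proof is correct but, for the second estimate, takes a different route than the paper. For \eqref{prelim:eq_mismatch_physical} the paper simply cites \cite[Lemma 5.10]{DLM18}, and your direct Schur argument is the standard way to obtain it. For \eqref{prelim:eq_mismatch_frequency}, after the same duality reduction to $N\geq 8M$, the paper argues on the Fourier side in one line: since $P_M f$ has frequency support in $\{|\xi|_\infty\lesssim M\}$, the product $\chi_1 P_M f$ can only land at frequencies $\sim N$ if $\chi_1$ itself contributes frequencies $\gtrsim N$, so $P_N(\chi_1 P_M f)=P_N\big((P_{\geq N/8}\chi_1)\,P_M f\big)$, and then $\|P_{\geq N/8}\chi_1\|_{L^\infty_x}\lesssim_D N^{-2D}$ by the smoothness of $\chi_1$. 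Your Taylor-expansion approach reaches the same conclusion by showing that each polynomial term $(y-x)^\alpha$ turns $K_N$ into a multiplier with symbol supported in the same annulus as $\psi(\cdot/N)$ (hence still orthogonal to $P_M$), leaving only a remainder that Schur's test handles with a gain of $N^{-K}$. Both arguments exploit the same underlying orthogonality; the paper's version is shorter because it packages the cancellation into a single frequency projection of $\chi_1$, while yours is more explicit at the kernel level and makes the mechanism (differentiation of the symbol does not enlarge its support) visible.
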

\begin{proof}
The inequality \eqref{prelim:eq_mismatch_physical} can be found in \cite[Lemma 5.10]{DLM18}. 
An inequality similar to \eqref{prelim:eq_mismatch_frequency} can be found in \cite[Lemma 5.11]{DLM18}, and we present a different argument. \\
Using duality and \( (P_N \chi_1 P_M)^\ast = P_M \chi_1 P_N \), we can assume that \( N \geq M \). From the mismatch condition, it then follows that \( N \geq 8 M \). Thus, we obtain for all \( f \in L_x^p(\rd) \) that 
\begin{equation*}
\| P_N( \chi_1 P_M f) \|_{L_x^p} = \| P_N( (P_{\geq N/8} \chi_1) P_Mf ) \|_{L_x^p} \lesssim \| P_{\geq N/8} \chi_1 \|_{L_x^\infty} \| P_M f \|_{L_x^p} \lesssim N^{-2D} \| f \|_{L_x^p}~. 
\end{equation*}
\end{proof}

The following auxiliary lemma will be helpful in the proof of probabilistic Strichartz estimates. 
\begin{lem}[{\(\ell_{k,l}^2\)-estimate}] \label{prelim:lem_l2}
Let \( s \in \reals \) and let \( f \in H_x^s(\rd) \). For any \( 2 \leq p \leq \infty \), we have that 

\begin{equation}\label{prelim:eq_l2}
\| P_k( \varphi_l f) \|_{\ell_l^2 \ell_k^2 L_x^{p^\prime}(\zd \times \{ k \in \zd\colon \| k \|_\infty \in (N/2,N]\} \times \rd)} \lesssim \| \widetilde{P}_N f \|_{L_x^2(\rd)} + N^{-s-10d} \| f \|_{H_x^s(\rd)}
\end{equation}
\end{lem}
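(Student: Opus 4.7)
Plan. The strategy is to split $P_k(\varphi_l f)$ into a main contribution, in which the frequency restriction $\widetilde P_N$ has been commuted onto $f$, and an error term that is rendered small by the frequency separation between $P_k$ and the complementary frequencies of $f$. For every $N \geq 1$, the symbol of $\widetilde P_N$ equals one on the Fourier support of every $P_k$ with $\|k\|_\infty \in (N/2, N]$, so $P_k = P_k \widetilde P_N$, and hence
\[
P_k(\varphi_l f) \;=\; P_k(\varphi_l \widetilde P_N f) \;+\; P_k\bigl(\varphi_l (1-\widetilde P_N) f\bigr).
\]
The first summand will produce the main bound $\|\widetilde P_N f\|_{L^2}$, while the second will be absorbed into the error $N^{-s-10d}\|f\|_{H^s}$.

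For the main term I will establish the general inequality
\[
\sum_{k,l \in \zd} \| P_k(\varphi_l g)\|_{L_x^{p'}}^2 \;\lesssim\; \|g\|_{L^2}^2 \qquad (g \in L^2(\rd)),
\]
to which I will apply $g = \widetilde P_N f$. Its proof exploits the fact that $\varphi_l g$ is supported in a unit cube around $l$: for $p' \in [1,2]$, Hölder's inequality allows me to pass from $L^{p'}$-control to weighted $L^2$-control,
\[
\| P_k(\varphi_l g)\|_{L^{p'}} \;\lesssim_D\; \| \langle x-l\rangle^{D/2} P_k(\varphi_l g)\|_{L^2} \qquad \text{for any fixed } D > d.
\]
Squaring, summing in $k$, and interchanging sum and integral, the square function estimate (Lemma \ref{prelim:lem_square_function}) yields the pointwise bound $\sum_k |P_k(\varphi_l g)(x)|^2 \lesssim (|\widecheck{\varphi}| * |\varphi_l g|^2)(x)$. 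Since $\widecheck{\varphi}$ is Schwartz and $|y-l|\leq 1$ on $\operatorname{supp}\varphi_l$, the weight $\langle x-l\rangle^D$ is integrable against the convolution kernel, leading to $\sum_k \|P_k(\varphi_l g)\|_{L^{p'}}^2 \lesssim \|\varphi_l g\|_{L^2}^2$. Summing finally over $l$ via $\sum_l \varphi_l^2 \leq 1$ completes the proof of the general inequality.

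For the error term I further decompose $(1-\widetilde P_N) f = \sum_{M\not\sim N} P_M f$, where the dyadic sum runs over $M \leq 2^{-9} N$ and $M \geq 2^9 N$. For $\|k\|_\infty \sim N$ and $M \not\sim N$, the Fourier supports of $P_k$ and $\widehat{P_M f}$ are separated by $\gtrsim \max(M, N)$, so the rapid decay of $\hat\varphi$ (equivalently, the frequency-mismatch estimate \eqref{prelim:eq_mismatch_frequency} applied to the re-centered projection $P_k = P_{1;k}$) yields, by Cauchy-Schwarz in the $\eta$-integral,
\[
\| P_k(\varphi_l P_M f)\|_{L^2} \;\lesssim_D\; \max(M,N)^{-D}\, M^{d/2}\, \|P_M f\|_{L^2}.
\]
Converting this to $L^{p'}$ via the same weighted $L^2$ trick as in the main term, and combining with the Sobolev bound $\|P_M f\|_{L^2} \leq M^{-s}\|f\|_{H^s}$ and a summation over $k$, $l$ and $M$ (for which the polynomial count $\#\{k : \|k\|_\infty \sim N\} \lesssim N^d$ is absorbed by the arbitrary decay in $D$), yields the desired $N^{-s-10d}\|f\|_{H^s}$ bound.

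The main obstacle lies in the $L^{p'}$-control of the main term when $p' < 2$: classical Plancherel orthogonality in $k$ is not available since $L^{p'}$ is not a Hilbert space, so the core of the argument is the interplay between the physical localization of $\varphi_l g$ (via the weight $\langle x-l\rangle^{D/2}$) and the square function estimate for the Fourier-orthogonal projections $P_k$.
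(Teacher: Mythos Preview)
Your main-term argument is correct and matches the paper's approach in spirit: both use the physical localization of $\varphi_l g$ to pass from $L^{p'}$ to $L^2$, and then apply the square-function estimate to handle the $\ell_k^2$-sum. The paper implements this via an auxiliary partition $\sum_{l'} \varphi_{l'} = 1$, while you use the equivalent device of a weight $\langle x-l\rangle^{D/2}$; there is no substantive difference.

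The error term, however, has a genuine gap. Your bound
\[
\| P_k(\varphi_l P_M f)\|_{L^2} \;\lesssim_D\; \max(M,N)^{-D}\, M^{d/2}\, \|P_M f\|_{L^2}
\]
is correct but is \emph{uniform in $l$}. When you then ``sum over $k$, $l$ and $M$'', the $k$-sum contributes only the polynomial factor $N^d$ that you absorb into $D$, but the $l$-sum ranges over all of $\zd$ and has no decay whatsoever from this bound; it simply diverges. The alternative bound $\|P_k(\varphi_l P_M f)\|_{L^2} \le \|\varphi_l P_M f\|_{L^2}$ is $\ell_l^2$-summable, but carries no frequency-mismatch gain, and interpolating between the two does not recover both features simultaneously.

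What is missing is a source of \emph{physical} decay in $l$ that survives alongside the frequency-mismatch gain. The paper obtains this by first writing $P_k = \sum_{N'\sim N} P_{N'} P_k$ and paying $N^{d/2}$ for the $\ell_k^2$-sum, and then inserting a dyadic spatial decomposition $g = \sum_{L\ge 1} \chi_L(\cdot - l)\, g$ \emph{to the right of} $P_M$. One can then use the frequency-mismatch estimate $\|P_{N'}\varphi_l P_M\|_{L^{p'}\to L^{p'}} \lesssim_D (NM)^{-D}$ together with the physical-mismatch estimate $\|\varphi_l P_M \chi_L(\cdot-l)\|_{L^{p'}\to L^{p'}} \lesssim_D (ML)^{-D}$ for $L\ge 8$. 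The upshot is a bound of the form
\[
\|P_{N'}(\varphi_l P_M g)\|_{L^{p'}} \;\lesssim_D\; (NM)^{-D}\, \|\langle x-l\rangle^{-D} g\|_{L^2},
\]
and the weight $\langle x-l\rangle^{-D}$ on the right is exactly what makes the $\ell_l^2$-sum finite. Your Fourier-side Cauchy--Schwarz sees only the separation of the frequency supports and is blind to the position $l$ of the physical cut-off; you need to bring the spatial localization of $\varphi_l$ back into play after the mismatch.
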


\begin{rem}
The error term \( N^{-s-10d} \| f \|_{H_x^s(\rd)} \) is a result of the non-compact support of \( \widehat{\varphi}_l\), but may essentially be ignored. On a heuristic level, each \( P_k (\varphi_l f) \) is supported on a spatial region of volume \( \sim 1 \), and thus \eqref{prelim:eq_l2} should follow from Hölder's inequality. To make this argument rigorous, we use the square-function estimate and the mismatch estimates above. 
\end{rem}

\begin{proof} Let \( \widetilde{P}_N  \) be the fattened Littlewood-Paley operator as in \eqref{prelim:eq_fattened}. We write \( M \nsim N \) if either \( M< 2^{-10} N \) or \( M > 2^{10} N \). In the following, we implicitly assume that \( \| k \|_\infty \in (N/2,N] \). We then estimate
\begin{equation}\label{prelim:eq_l2_split}
\| P_k( \varphi_l f) \|_{\ell_l^2 \ell_k^2 L_x^{p^\prime}} \leq \| P_k( \varphi_l \widetilde{P}_N f ) \|_{\ell_l^2 \ell_k^2 L_x^{p^\prime}} +  \sum_{M \nsim N } \| P_k( \varphi_l P_M f) \|_{\ell_l^2 \ell_k^2 L_x^{p^\prime}} ~.
\end{equation}
We begin by  controlling the first summand in \eqref{prelim:eq_l2_split}. Using Minkowski's integral inequality and the square-function estimate (Lemma \ref{prelim:lem_square_function}), we obtain that
\begin{align} 
&\| P_k ( \varphi_l \widetilde{P}_N f ) \|_{\ell_l^2 \ell_k^2 L_x^{p^\prime}} 
= \| \varphi_{l^\prime} P_k ( \varphi_l \widetilde{P}_N f ) \|_{\ell_l^2 \ell_k^2 \ell_{{l^\prime}}^1 L_x^{p^\prime}} 
\lesssim \| \varphi_{l^\prime} P_k ( \varphi_l \widetilde{P}_N f ) \|_{\ell_l^2 \ell_k^2 \ell_{{l^\prime}}^1 L_x^{2}} 
\lesssim \| \varphi_{l^\prime} P_k ( \varphi_l \widetilde{P}_N f ) \|_{\ell_l^2 \ell_{{l^\prime}}^1 L_x^{2} \ell_k^2}  \notag \\
&=\Big\| \varphi_{l^\prime} \left( |\widecheck{\varphi}| * |\varphi_l \widetilde{P}_N f|^2 \right)^{\frac{1}{2}} \Big\|_{\ell_l^2 \ell_{l^\prime}^1 L_x^2} ~. \label{prelim:eq_l2_proof1}
\end{align}
Using simple support considerations, we have that
\begin{align*}
&\Big\| \varphi_{l^\prime} \left( |\widecheck{\varphi}| * |\varphi_l \widetilde{P}_N f|^2 \right)^{\frac{1}{2}}\Big \|_{L_x^2}^2 
\leq\Big \| \varphi_{l^\prime} \left( |\widecheck{\varphi}| * |\varphi_l \widetilde{P}_N f|^2 \right) \Big\|_{L_x^1} 
\lesssim \langle l^\prime - l \rangle^{-10d} \| (\varphi_l \widetilde{P}_N f)^2 \|_{L_x^1} \\
&\lesssim \langle l^\prime - l \rangle^{-10d} \| \varphi_l \widetilde{P}_N f\|_{L_x^2}^2~. 
\end{align*}
Inserting this back into \eqref{prelim:eq_l2_proof1}, we obtain that 
\begin{equation*}
\| P_k ( \varphi_l \widetilde{P}_N f ) \|_{\ell_l^2 \ell_k^2 L_x^{p^\prime}} \lesssim   \| \langle l^\prime - l \rangle^{-5d} \varphi_l \widetilde{P}_N f\|_{\ell_l^2 \ell_{l^\prime}^1 L_x^2} \lesssim \| \widetilde{P}_N f \|_{L_x^2} ~.
\end{equation*}
Thus, this yields the first term in \eqref{prelim:eq_l2}. We now control the second summand in \eqref{prelim:eq_l2_split}. First, note that \( P_k = \sum_{2^{-5}N \leq N^\prime \leq 2^5 N} P_{N^\prime} P_k \). Since there exist only \( \sim N^d \) frequencies of magnitude \( \sim N \), we have that 
\begin{align*}
\| P_k ( \varphi_l P_M f ) \|_{\ell_k^2 \ell_l^2 L_x^{p^\prime}} 
&\lesssim N^{\frac{d}{2}} \sum_{2^{-5}N\leq N^\prime \leq 2^5 N} \| P_{N^\prime} (\varphi_l P_M f) \|_{\ell_l^2 L_x^{p^\prime}}~. 
\end{align*}
It now suffices to prove for all \( g \in S(\rd) \), all \( M \nsim N \), and all absolute constants \( D  > 0 \) that 
\begin{equation}\label{prelim:eq_l2_proof2}
\| P_{N^\prime}  ( \varphi_l P_M g) \|_{L_x^{p^\prime}} \lesssim_D (NM)^{-D} \| \langle x-l \rangle^{-D} g \|_{L_x^{2}}~. 
\end{equation}
Using spatial translation invariance, we may set \( l = 0 \). Let \( \{\chi_L\}_{L\geq 1} \) denote the dyadic decomposition in physical space. Using the mismatch estimates (Lemma \ref{prelim:lem_mismatch}), we obtain 
\begin{align*}
&\| P_{N^\prime} (\varphi_0 P_M g) \|_{L_x^{p^\prime}} 
\leq \sum_{L\geq 1} \|P_{N^\prime}  (\varphi_0 P_M  \chi_L g) \|_{L_x^{p^\prime}} 
\leq \sum_{L\geq 1}  \| P_{N^\prime}  \varphi_0 P_M \chi_L \|_{L_x^{p^\prime}\rightarrow L_x^{p^\prime}} \| \widetilde{\chi}_L g \|_{L_x^{p^\prime}} \\
&\lesssim (NM)^{-D} \sum_{L\geq 1} L^{-2D} \| \widetilde{\chi}_L g \|_{L_x^{p^\prime}} 
\lesssim (NM)^{-D} \| \langle x \rangle^{-D} g \|_{L_x^2}~. 
\end{align*}
\end{proof}

As a direction consequence of \eqref{prelim:lem_l2}, we also obtain the following estimate on the \( H^s\)-norm of the microlocal randomization.
\begin{lem}[{\(H_x^s\)-norm of \( f^\omega \)}]\label{prelim:lem_random_hs}
Let \( f \in H_x^s(\rd) \) and let \( f^\omega \) be its microlocal randomization. We further set 
\begin{equation}\label{prelim:eq_frequency_decomposition}
f_1^\omega := \sum_{l\in \zd}  X_{0,l} P_0(\varphi_l f) \qquad \text{and} \qquad f_N^\omega := \sum_{\substack{k,l\in \zd\\ \| k\|_\infty \in (N/2,N]}}  \hspace{-2ex} X_{k,l} P_k(\varphi_l f)~, \quad \text{where} ~N \geq 2~.  
\end{equation}
Then, we have for all \(  2 \leq r < \infty \) that 
\begin{equation}\label{prelim:eq_random_hs}
 \| f^\omega \|_{L_\omega^r H_x^s} \simeq    \| N^s f_N^\omega \|_{L_\omega^r \ell_N^2 L_x^2} \lesssim  \sqrt{r}\| f \|_{H_x^s}~. 
\end{equation}
\end{lem}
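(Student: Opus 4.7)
The plan is to handle the two assertions in \eqref{prelim:eq_random_hs} separately. The equivalence $\| f^\omega \|_{L_\omega^r H_x^s} \simeq \| N^s f_N^\omega \|_{L_\omega^r \ell_N^2 L_x^2}$ will follow from a pointwise (in $\omega$) Littlewood--Paley identity, and the bound by $\sqrt{r}\|f\|_{H_x^s}$ will be obtained from Minkowski's integral inequality, Khintchine's inequality, and the $\ell_{k,l}^2$-estimate of Lemma \ref{prelim:lem_l2}.

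First I would verify the equivalence. For any $k$ with $\|k\|_\infty \in (N/2,N]$, the Fourier support of $P_k(\varphi_l f)$ lies in $k+[-\tfrac{1}{2},\tfrac{1}{2}]^d \subseteq \{\xi : |\xi| \sim N\}$, with the analogous statement for $N=1$. Consequently $f_N^\omega$ is frequency-localized at scale $N$ and we may write $f_N^\omega = \widetilde{P}_N f_N^\omega$ for the fattened Littlewood--Paley multiplier of \eqref{prelim:eq_fattened}. Since $f^\omega = \sum_{N \geq 1} f_N^\omega$ has almost orthogonal dyadic pieces in frequency, standard Littlewood--Paley theory yields, for every fixed $\omega$,
\[
\| f^\omega \|_{H_x^s}^2 \simeq \sum_{N\geq 1} N^{2s} \| f_N^\omega \|_{L_x^2}^2 = \| N^s f_N^\omega \|_{\ell_N^2 L_x^2}^2 ~.
\]
Taking the $L_\omega^r$-norm gives the claimed equivalence.

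Next I would establish the upper bound. Since $r \geq 2$, Minkowski's integral inequality (Lemma \ref{prelim:lem_minkowski}) permits swapping the $L_\omega^r$- and $L_x^2$-norms:
\[
\| f_N^\omega \|_{L_\omega^r L_x^2} \leq \| f_N^\omega \|_{L_x^2 L_\omega^r}~.
\]
For each $x$, the independence, symmetry and uniform sub-gaussianity of $\{X_{k,l}\}$ together with Khintchine's inequality (Lemma \ref{prelim:lem_khintchine}) give
\[
\| f_N^\omega(x) \|_{L_\omega^r} \lesssim \sqrt{r} \Big( \sum_{\substack{k,l \in \zd \\ \|k\|_\infty \in (N/2,N]}} |P_k(\varphi_l f)(x)|^2 \Big)^{\frac{1}{2}}~.
\]
Taking the $L_x^2$-norm and invoking Lemma \ref{prelim:lem_l2} with $p = p' = 2$, we obtain
\[
\| f_N^\omega \|_{L_\omega^r L_x^2} \lesssim \sqrt{r}\, \| P_k(\varphi_l f) \|_{\ell_l^2 \ell_k^2 L_x^2} \lesssim \sqrt{r} \Big( \| \widetilde{P}_N f \|_{L_x^2} + N^{-s-10d} \| f \|_{H_x^s} \Big)~.
\]
Multiplying by $N^s$ and taking the $\ell_N^2$-norm, the main term sums via Plancherel to $\|f\|_{H_x^s}$, while the error contributes $\big( \sum_N N^{-20d} \big)^{1/2} \| f \|_{H_x^s} \lesssim \| f \|_{H_x^s}$. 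Combining with the equivalence from the first step yields \eqref{prelim:eq_random_hs}.

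The only slightly delicate point is the error term in Lemma \ref{prelim:lem_l2}, but its rapid decay in $N$ makes it summable. Everything else is a direct assembly of Minkowski, Khintchine, and the $\ell_{k,l}^2$-estimate, so I do not anticipate any essential obstacle.
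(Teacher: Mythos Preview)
Your proposal is correct and follows essentially the same route as the paper: the equivalence is a pointwise Littlewood--Paley statement, and the upper bound is Minkowski $\to$ Khintchine $\to$ Lemma~\ref{prelim:lem_l2}, followed by an $\ell_N^2$-summation. One cosmetic inaccuracy: the Fourier support of $P_k(\varphi_l f)$ lies in $k+[-5/8,5/8]^d$ rather than $k+[-1/2,1/2]^d$ (recall $\supp\varphi\subseteq[-5/8,5/8]^d$), but this does not affect the almost-orthogonality you need.
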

\begin{proof}
The first equivalence in \eqref{prelim:eq_random_hs} is a direct consequence of the definition of the \( H_x^s\)-norm. Now, we prove the bound in \eqref{prelim:eq_random_hs}. From Minkowski's integral inequality, Khintchine's inequality, and Lemma \ref{prelim:lem_l2}, we have for all \( N \geq 2 \) that 
\begin{align*}
 \| N^s f_N^\omega \|_{L_\omega^r  L_x^2} &\leq \| N^s  \hspace{-2ex} \sum_{ \| k \|_\infty \in (N/2,N]}  \hspace{-2ex} X_{k,l} P_k(\varphi_l f ) \|_{L_x^2 L_\omega^r} \\
 &\leq \sqrt{r} N^s \| P_k(\varphi_l f)\|_{L_x^2 \ell_{k,l}^2(\|k\|_\infty \in (N/2,N]} \\
 &\lesssim \sqrt{r} \left( N^s \| \widetilde{P}_N f \|_{L_x^2(\rd)} + N^{-10d} \| f \|_{H_x^s(\rd)} \right)
\end{align*}
The same argument also applies to \( N=1 \). After taking the \( \ell_N^2\)-norm, this completes the proof. 
\end{proof}

\subsection{Strichartz estimates}
The individual blocks in the microlocal randomization or the Wiener randomization have frequency support inside a unit-sized cube (at a large distance from the origin). Since this rules out the Knapp example, one expects a refined dispersive estimate. The following lemma is due to Klainerman and Tataru \cite{KT99}, and it has first been used in the probabilistic context by \cite{DLM17}. 

\begin{lem}[{Refined dispersive estimate by Klainerman-Tataru \cite{KT99}}]\label{prelim:lem_refined_dispersive}
Let \( f \in L^1(\reals^d) \), let \( k \in \mathbb{Z}^d \) satisfy \( \| k \|_\infty \in (N/2,N]\), and let \( M \leq N \).  Then it holds for all \( t \in \reals \) and  \( 2 \leq p \leq \infty \) that 
\begin{equation}\label{prelim:eq_refined_dispersive}
\| \Htpm P_{M;k} f \|_{L_x^p(\mathbb{R}^d)} \lesssim \frac{M^{d (1-\frac{2}{p})}}{(1+\frac{M^2}{N} |t|)^{(d-1)(\frac{1}{2}-\frac{1}{p})}} \| f \|_{L_x^{p^\prime}(\reals^d)}~. 
\end{equation}
\end{lem}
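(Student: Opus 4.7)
\bigskip

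\textbf{Proof proposal.} The plan is to reduce to the endpoint $p=\infty$ by complex interpolation with the energy estimate, and then to prove the dispersive $L^1\to L^\infty$ kernel bound via a stationary/non-stationary phase analysis adapted to the frequency localization in the cube of center $k$ and side $M$.

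First I would handle $p=2$ by Plancherel: since $e^{\pm it|\nabla|}$ and $P_{M;k}$ are Fourier multipliers with bounded symbols, $\|e^{\pm it|\nabla|}P_{M;k}f\|_{L^2_x}\lesssim\|f\|_{L^2_x}$, which matches \eqref{prelim:eq_refined_dispersive} at $p=2$. Assuming the endpoint bound
\begin{equation*}
\|e^{\pm it|\nabla|}P_{M;k}f\|_{L^\infty_x(\reals^d)}\;\lesssim\;\frac{M^{d}}{(1+\tfrac{M^{2}}{N}|t|)^{(d-1)/2}}\|f\|_{L^1_x(\reals^d)},
\end{equation*}
Riesz--Thorin interpolation between the pairs $(L^1,L^\infty)$ and $(L^2,L^2)$ yields the full range $2\le p\le\infty$ with the stated exponents $d(1-\tfrac{2}{p})$ and $(d-1)(\tfrac{1}{2}-\tfrac{1}{p})$. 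So the whole content is in the dispersive estimate.

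For the endpoint I would realize the operator as convolution with the kernel $K_t(x)=\int_{\reals^d}e^{i(x\cdot\xi\pm t|\xi|)}\psi\!\left(\tfrac{\xi-k}{M}\right)d\xi$ (with $\psi$ replaced by $\varphi$ if $M=1$) and reduce matters to the pointwise bound $\|K_t\|_{L^\infty_x}\lesssim M^{d}(1+\tfrac{M^2}{N}|t|)^{-(d-1)/2}$. Rescaling $\xi=k+M\eta$ gives
\begin{equation*}
K_t(x)\;=\;M^{d}\,e^{ix\cdot k}\int_{\reals^d}e^{i\Phi(\eta)}\,\psi(\eta)\,d\eta,\qquad \Phi(\eta)\;=\;Mx\cdot\eta\pm t|k+M\eta|.
\end{equation*}
On the support of $\psi$ we have $|k+M\eta|\sim N$ because $\|k\|_\infty\in(N/2,N]$ and $M\le N$, so $\nabla_\eta|k+M\eta|=M\omega$ with $\omega=(k+M\eta)/|k+M\eta|$ a smoothly varying unit vector, and the Hessian
\begin{equation*}
\nabla^{2}_\eta|k+M\eta|\;=\;\frac{M^{2}}{|k+M\eta|}\bigl(I-\omega\otimes\omega\bigr)
\end{equation*}
has one vanishing eigenvalue along $\omega$ and $d-1$ eigenvalues of size $M^{2}/N$ in the orthogonal directions. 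The trivial bound $|K_t|\le M^{d}\int|\psi|$ already handles $|t|\le N/M^{2}$, so it remains to treat $|t|\ge N/M^2$.

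For large $t$ I would perform a dyadic decomposition in the radial variable $\omega\cdot\eta$ and apply stationary/non-stationary phase. In the transverse directions the Hessian of $\Phi$ has size $|t|M^{2}/N$, so the standard (non-degenerate) stationary phase in $d-1$ variables yields a gain of $(|t|M^{2}/N)^{-(d-1)/2}$; in the radial direction $\omega$ the phase $\Phi$ has no curvature, but integrating by parts against $\partial_\omega\Phi=Mx\cdot\omega\pm tM$ produces, away from the critical set $\{|x\cdot\omega\pm t|\lesssim N/M\}$, arbitrary polynomial decay in $|t|M^{2}/N$, and in the critical range the measure of $\eta$ contributes only an $O(1)$ factor. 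Summing these contributions gives the desired $(|t|M^{2}/N)^{-(d-1)/2}$ bound with the prefactor $M^{d}$, completing the endpoint estimate.

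The main obstacle is the degeneracy of the Hessian along the radial direction $\omega$: one cannot just cite the standard $d$-dimensional stationary phase lemma. The way around it is the radial/tangential split described above (sometimes phrased as the Klainerman--Tataru observation that a unit-sized piece of the cone, rescaled to unit size, looks like a paraboloid with curvatures of order $M^{2}/N$ in $d-1$ directions), for which one can also quote \cite{KT99} directly.
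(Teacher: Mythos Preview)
Your proposal is correct, and the reduction to $p=\infty$ via interpolation with the trivial $L^2\to L^2$ bound is exactly how the paper starts. The difference is in how the endpoint $L^1\to L^\infty$ bound is obtained. The paper does not carry out the stationary phase analysis itself; instead it quotes the Klainerman--Tataru kernel bound \cite[(A.66)]{KT99} (after rescaling) in the form
\[
\|e^{\pm it|\nabla|}P_{M;k}f\|_{L^\infty_x}\lesssim \frac{MN^{d-1}}{(1+N|t|)^{(d-1)/2}}\|f\|_{L^1_x},
\]
and then performs a short-time/long-time case split: for $|t|\lesssim N/M^2$ it discards this and uses Bernstein (your ``trivial bound $M^d$''), while for $|t|\gtrsim N/M^2$ a one-line algebraic identity rewrites $MN^{d-1}/(N|t|)^{(d-1)/2}$ as $M^d/(M^2|t|/N)^{(d-1)/2}$. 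Your route is more self-contained---you redo the oscillatory integral analysis that underlies \cite{KT99}, exploiting directly that the Hessian of $|k+M\eta|$ has $d-1$ eigenvalues of size $M^2/N$---whereas the paper treats the KT estimate as a black box and only supplies the elementary reshaping. Your sketch of the radial integration-by-parts is somewhat informal (the clean way is the tangential/radial splitting you allude to at the end, or equivalently the parabolic rescaling of a cap of aperture $M/N$), but since you also note one can simply cite \cite{KT99} for this step, there is no gap.
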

As stated, the inequality \eqref{prelim:eq_refined_dispersive} essentially follows from \cite{KT99}. For the sake of completeness, we present the modification below. 
\begin{proof} By interpolation against the energy estimate \( \| \Htpm P_{M;k} f \|_{L_x^2(\mathbb{R}^d)} \leq \| f\|_{L_x^2(\reals^d)} \), it suffices to prove \eqref{prelim:eq_refined_dispersive} for \( p=\infty \). 
The inequality \cite[(A.66)]{KT99}, where \( \mu = M/N \),  and a scaling argument yield
\begin{equation}\label{prelim:eq_klainerman_tataru}
\| \Htpm  P_{M;k} f \|_{L_x^\infty(\mathbb{R}^d)} \lesssim  \frac{MN^{d-1}}{(1+N|t|)^{\frac{d-1}{2}}} \| f \|_{L_x^{1}(\reals^d)}~. 
\end{equation}
We now distinguish two cases. If \( |t|\lesssim N/M^2 \), then Bernstein's inequality (Lemma \ref{prelim:lem_bernstein}) yields that 
\begin{equation*}
\| \Htpm  P_{M;k} f \|_{L_x^\infty(\mathbb{R}^d)}  \lesssim  M^{\frac{d}{2}} \| \Htpm  P_{M;k} f \|_{L_x^2(\mathbb{R}^d)} = M^{\frac{d}{2}} \|  P_{M;k} f \|_{L_x^2(\mathbb{R}^d)} 
\lesssim  M^{d} \|  P_{M;k} f \|_{L_x^1(\mathbb{R}^d)} ~. 
\end{equation*}
If \( |t|\lesssim N/M^2 \), then \eqref{prelim:eq_klainerman_tataru} yields that 
\begin{equation*}
\| \Htpm  P_{M;k} f \|_{L_x^\infty(\mathbb{R}^d)}\lesssim \frac{MN^{d-1}}{(N|t|)^{\frac{d-1}{2}}} \| f \|_{L_x^1(\reals^d)}  = \frac{M^d}{(\frac{M^2}{N}t)^{\frac{d-1}{2}}} \| f \|_{L_x^1(\reals^d)} \lesssim \frac{M^d}{(1+ \frac{M^2}{N}t)^{\frac{d-1}{2}}} \| f \|_{L_x^1(\reals^d)} ~. 
\end{equation*}
\end{proof}

In this paper, we are mainly concerned with the case \( M= 1 \). Then, \eqref{prelim:eq_refined_dispersive} describes the linear evolution on short time intervals more accurately than  \eqref{prelim:eq_klainerman_tataru}. 
As a corollary of the refined dispersive estimate, we obtain the following refined Strichartz estimate. \\
Let \( 2 \leq q,p \leq \infty \). We call the pair \( (q,p) \) wave-admissible if 
\begin{equation*}
\frac{1}{q} + \frac{d-1}{2p} \leq \frac{d-1}{4} \qquad \text{and} \qquad (q,p,d) \neq (2,\infty,3)~. 
\end{equation*}

\begin{cor}[{Refined Strichartz estimates \cite{KT99}}]\label{prelim:cor_refined_strichartz}
Let \( f \in L_x^2(\rd) \), let \( k \in \mathbb{Z}^d \) satisfy \( \| k \|_\infty \in (N/2,N]\), and let \( M\leq N \). Then, we have for all wave-admissible pairs \( (q,p) \) that 
\begin{equation}
\| \exp(\pm it|\nabla|) P_{M;k} f \|_{L_t^q L_x^p(\reals\times \rd)} \lesssim M^{\frac{d}{2}-\frac{2}{q}-\frac{d}{p}} N^{\frac{1}{q}} \| P_{M;k} f\|_{L_x^2(\rd)}
\end{equation}
\end{cor}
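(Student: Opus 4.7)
The estimate is a standard $TT^*$ deduction from Lemma \ref{prelim:lem_refined_dispersive}. Let $T\colon L_x^2(\rd)\to L_t^q L_x^p(\reals\times\rd)$ be the operator $Tf = \exp(\pm it|\nabla|) P_{M;k} f$. By duality, it suffices to show that the adjoint $T^*$ is bounded, and by the $TT^*$ identity $\|T\|^2 = \|TT^*\|_{L_t^{q'}L_x^{p'}\to L_t^q L_x^p}$, it suffices to prove
\begin{equation*}
\| TT^* g \|_{L_t^q L_x^p(\reals\times\rd)} \lesssim M^{d-\frac{2d}{p}-\frac{4}{q}} N^{\frac{2}{q}} \| g \|_{L_t^{q'} L_x^{p'}(\reals\times\rd)}.
\end{equation*}
A direct computation gives $TT^* g(t,\cdot) = \int_{\reals} \exp(\pm i(t-s)|\nabla|) \widetilde{P}_{M;k} g(s,\cdot)\, ds$ for a slightly fattened multiplier $\widetilde{P}_{M;k}$ (which is controlled in the same way as $P_{M;k}$).

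\textbf{Applying the refined dispersive estimate.} Take the $L_x^p$ norm inside the integral via Minkowski, and apply Lemma \ref{prelim:lem_refined_dispersive} pointwise in the $(t-s)$ variable. This produces
\begin{equation*}
\| TT^* g(t,\cdot) \|_{L_x^p} \lesssim \int_{\reals} K(t-s)\, \| g(s,\cdot)\|_{L_x^{p'}}\, ds, \qquad K(\tau) := \frac{M^{d(1-\frac{2}{p})}}{(1+\tfrac{M^2}{N}|\tau|)^{\alpha}},
\end{equation*}
with $\alpha := (d-1)(\tfrac{1}{2}-\tfrac{1}{p})$. Thus the matter reduces to bounding a one-dimensional convolution operator with kernel $K$ from $L_t^{q'}$ to $L_t^q$.

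\textbf{Convolution bound via Hardy–Littlewood–Sobolev.} By the one-dimensional HLS inequality, the desired $L_t^{q'}\to L_t^q$ bound holds with constant $\|K\|_{L^{r,\infty}_t}$ where $\frac{1}{r}=\frac{2}{q}$, provided $1<r<\infty$ (this is where the Keel–Tao endpoint exclusion $(q,p,d)\neq(2,\infty,3)$ appears, since that case forces $r=1$). The weak $L^r$ norm of $(1+|\tau|)^{-\alpha}$ is finite precisely when $\alpha r \ge 1$, i.e., $\alpha \ge \frac{2}{q}$, which is exactly the wave-admissibility condition $\frac{1}{q}+\frac{d-1}{2p}\le\frac{d-1}{4}$. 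A change of variables $\tau = (N/M^2)\sigma$ yields
\begin{equation*}
\|K\|_{L^{r,\infty}_t} \lesssim M^{d(1-\frac{2}{p})} \bigl(N/M^2\bigr)^{\frac{1}{r}} = M^{d-\frac{2d}{p}-\frac{4}{q}} N^{\frac{2}{q}},
\end{equation*}
which is the claimed constant. Taking square roots via the $TT^*$ principle gives the stated estimate.

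\textbf{Main obstacle.} The computation itself is routine once the correct admissibility condition is identified; the delicate point is handling the borderline case where $\alpha r = 1$ (i.e., $2/q = \alpha$). There HLS still applies in its weak-type form for $1<r<\infty$, which is why the hypothesis $(q,p,d)\neq(2,\infty,3)$ is imposed. Checking that all the Littlewood–Paley multipliers (genuine and fattened) involved in $T^*T$ and $TT^*$ are harmless amounts to observing that $P_{M;k}$ commutes with $\exp(\pm it|\nabla|)$ and that a dispersive bound for $\widetilde{P}_{M;k}$ is equivalent to one for $P_{M;k}$ up to constants.
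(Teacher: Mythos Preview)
Your approach is essentially the paper's own: the paper simply remarks that the corollary follows from Lemma~\ref{prelim:lem_refined_dispersive} by ``a standard $TT^*$-argument'' and defers the endpoint $(2,2(d-1)/(d-3))$ to Keel--Tao \cite{KT98}. Your $TT^*$ + Hardy--Littlewood--Sobolev computation is exactly that standard argument, and your scaling check is correct.

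There is one inaccuracy worth flagging. You write that the condition $r>1$ is guaranteed by the exclusion $(q,p,d)\neq(2,\infty,3)$. But $r=q/2$, so $r=1$ precisely when $q=2$, for \emph{every} $p$ and $d$. In particular, the wave-admissible endpoint $(q,p)=(2,2(d-1)/(d-3))$ in dimensions $d\geq 4$ also has $r=1$, and your HLS/weak-Young step does not cover it. This is not a defect relative to the paper---the paper explicitly refers to \cite{KT98} for that endpoint---but your sentence attributing the $r=1$ failure solely to the forbidden triple $(2,\infty,3)$ is incorrect. The correct statement is: the HLS route handles all non-endpoint admissible pairs ($q>2$), while the $q=2$ endpoint (which exists and is admissible for $d\geq 4$) requires the Keel--Tao bilinear/atomic argument.
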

The derivation of the refined Strichartz estimate from Lemma \ref{prelim:lem_refined_dispersive} follows from a standard \( TT^\ast \)-argument, and we therefore omit the proof. For the endpoint \( (2,2 (d-1)/(d-3)) \), we also refer to \cite{KT98}. Let us emphasize two special cases: If \( M= N \), we obtain the usual scaling factor \( N^{\frac{d}{2}-\frac{1}{q}-\frac{d}{p}} \), and if \( M=1 \), we obtain the factor \( N^{\frac{1}{q}} \), which does not depend on \( p \).

\section{Probabilistic Strichartz estimates}

In this section, we derive probabilistic Strichartz estimates (cf. \cite{BOP2014,DLM17,LM13}) and a probabilistic long-time decay estimate (cf. \cite{Murphy17}). To keep the exposition self-contained, we include the (short) proofs. Recall from \eqref{prelim:eq_frequency_decomposition} that
\begin{equation*}
f_1^\omega := \sum_{l\in \zd}  X_{0,l} P_0(\varphi_l f) \qquad \text{and} \qquad f_N^\omega := \sum_{\substack{k,l\in \zd\\ \| k\|_\infty \in (N/2,N]}}  \hspace{-2ex} X_{k,l} P_k(\varphi_l f)~, \quad \text{where} ~N \geq 2~.  
\end{equation*}

\begin{lem}[{Probabilistic Strichartz estimate}]\label{prob:lem_probabilistic_strichartz}
Let \( f \in H_x^s(\rd) \) and let \( f^\omega \) be its microlocal randomization. Then, it holds for all \( N \geq 1 \), all wave-admissible exponent pairs \( (q,p) \), and all \( 1 \leq r < \infty \) that 
\begin{equation}\label{prob:eq_probabilistic_strichartz}
\| \exp(\pm it |\nabla|) f_N^\omega \|_{L_\omega^r L_t^q L_x^p(\Omega\times \reals \times \rd)} \lesssim \sqrt{r} N^{\frac{1}{q}-s+} \| f \|_{H_x^s(\rd)}~. 
\end{equation}
\end{lem}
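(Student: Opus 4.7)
The plan is to follow the standard recipe for probabilistic Strichartz estimates: apply Minkowski to swap $L^r_\omega$ inside the space-time norms, reduce the $L^r_\omega$ norm of the random sum to an $\ell^2$ square function via Khintchine (Lemma \ref{prelim:lem_khintchine}), control each individual linear evolution using the refined Strichartz estimate with $M=1$ (Corollary \ref{prelim:cor_refined_strichartz}), and finally collect the $\ell^2$ sum of localized $L^2$ pieces via the auxiliary estimate of Lemma \ref{prelim:lem_l2}. Since $L^r_\omega \hookrightarrow L^{r'}_\omega$ on a probability space for $r' \leq r$, it suffices to prove \eqref{prob:eq_probabilistic_strichartz} for $r \geq \max(p,q)$, since the estimate for smaller $r$ then follows by Hölder.

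So I would first assume $r \geq \max(p,q)$. Writing
\begin{equation*}
 \exp(\pm it|\nabla|) f_N^\omega(x) = \sum_{\substack{k,l\in\zd \\ \|k\|_\infty \in (N/2,N]}} X_{k,l}\, g_{k,l}(t,x), \qquad g_{k,l}(t,x):=\exp(\pm it|\nabla|) P_k(\varphi_l f)(x),
\end{equation*}
I would apply Minkowski's integral inequality (Lemma \ref{prelim:lem_minkowski}) twice to move $L^r_\omega$ past $L^q_t L^p_x$, and then use Khintchine (Lemma \ref{prelim:lem_khintchine}) in the probability variable. This gives
\begin{equation*}
 \big\| \exp(\pm it|\nabla|) f_N^\omega \big\|_{L^r_\omega L^q_t L^p_x}
 \lesssim \sqrt{r}\; \Big\| \Big(\sum_{k,l} |g_{k,l}(t,x)|^2\Big)^{1/2}\Big\|_{L^q_t L^p_x}.
\end{equation*}
Since $p,q \geq 2$, another application of Minkowski's integral inequality moves the $\ell^2_{k,l}$ sum outside, yielding
\begin{equation*}
 \big\| \exp(\pm it|\nabla|) f_N^\omega \big\|_{L^r_\omega L^q_t L^p_x} \lesssim \sqrt{r}\,\Big( \sum_{k,l} \| g_{k,l} \|_{L^q_t L^p_x}^2 \Big)^{1/2}.
\end{equation*}

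At this point I would invoke the refined Strichartz estimate with $M=1$. Since $\varphi_l f$ has $P_k$-localized frequency support with $\|k\|_\infty \in (N/2,N]$, Corollary \ref{prelim:cor_refined_strichartz} gives
\begin{equation*}
 \| g_{k,l} \|_{L^q_t L^p_x(\reals\times\rd)} \lesssim N^{1/q}\, \| P_k(\varphi_l f) \|_{L^2_x(\rd)},
\end{equation*}
with the key feature that the $p$-dependence in the prefactor $M^{d/2-2/q-d/p}$ disappears because $M=1$. The resulting $\ell^2_{k,l}$ sum of $\|P_k(\varphi_l f)\|_{L^2_x}$ is precisely the quantity estimated in Lemma \ref{prelim:lem_l2} (with $p=2$, so $p'=2$), which bounds it by $\|\widetilde{P}_N f\|_{L^2_x} + N^{-s-10d}\|f\|_{H^s_x} \lesssim N^{-s}\|f\|_{H^s_x}$ after absorbing the $N^s$ factor. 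Chaining the inequalities produces the claimed bound $\sqrt{r}\, N^{1/q - s}\|f\|_{H^s_x}$, which in particular implies the weaker $N^{1/q-s+}$ bound. The case $N=1$ is handled in exactly the same way, with only the $l$-sum present.

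The argument is almost entirely routine once the right tools are in hand; the only point requiring care is the correct order of Minkowski applications (which needs $p,q \geq 2$, guaranteed by wave-admissibility) and the observation that the $M=1$ refined Strichartz loses only $N^{1/q}$ rather than the full scaling factor $N^{d/2-1/q-d/p}$ — this is what makes the estimate gain regularity over the deterministic Strichartz bound and is the true source of the probabilistic improvement.
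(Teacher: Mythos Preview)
Your argument is essentially the same as the paper's for the main range $2 \leq p,q < \infty$: Minkowski to push $L^r_\omega$ inside, Khintchine, Minkowski again to pull out $\ell^2_{k,l}$, then the refined Strichartz estimate with $M=1$ and Lemma~\ref{prelim:lem_l2}. That part is fine.

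The gap is at the endpoints. Wave-admissible pairs allow $q=\infty$ or $p=\infty$, and your Minkowski swap $L^r_\omega L^q_t L^p_x \to L^q_t L^p_x L^r_\omega$ requires $r \geq \max(p,q)$; since $r<\infty$ this is impossible when either exponent is infinite. Your parenthetical ``which needs $p,q\geq 2$, guaranteed by wave-admissibility'' does not address this, since the issue is the upper endpoint, not the lower one. The paper handles these cases by a separate reduction: for $p=\infty$ one uses Bernstein's inequality to trade $L^\infty_x$ for $L^{p}_x$ with $p<\infty$ at the cost of $N^{0+}$ (hence the ``$+$'' in the exponent), and for $q=\infty$ one uses Lemma~\ref{prelim:lem_inf_removal} to replace $L^\infty_t$ by $L^q_t$ with $q<\infty$, again at the cost of $N^{0+}$. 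Once you add these two reductions, your proof is complete and matches the paper's.
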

This estimate has previously appeared for the Wiener randomization in \cite{DLM17}. 

\begin{proof} In the following, we implicitly assume that \( k \in \zd \) always satisfies \( \| k \|_\infty \in (N/2,N] \). 
First, we assume that \( 2 \leq p,q < \infty \), and that \( \max(p,q)\leq r < \infty\).  Using Minkowski's integral inequality (Lemma \ref{prelim:lem_minkowski}), Khintchine's inequality (Lemma \ref{prelim:lem_khintchine}), and the refined dispersive estimate (Lemma \ref{prelim:lem_refined_dispersive}), we have that 
\begin{align*}
&\| \exp(\pm it |\nabla|) f_N^\omega \|_{L_\omega^r L_t^q L_x^p} 
\leq \| \exp(\pm it |\nabla|) f_N^\omega \|_{L_t^q L_x^p L_\omega^r} 
\lesssim \sqrt{r} \| \exp(\pm it |\nabla|) P_k(\varphi_l f) \|_{L_t^q L_x^p \ell_{k,l}^2} \\
&\leq \sqrt{r} \| \exp(\pm it |\nabla|) P_k(\varphi_l f) \|_{\ell_{k,l}^2 L_t^q L_x^p} 
\lesssim \sqrt{r} N^{\frac{1}{q}}  \| P_k(\varphi_l f) \|_{\ell_{k,l}^2 L_x^2} 
\lesssim \sqrt{r} N^{\frac{1}{q}-s} \| f \|_{H_x^s}~. 
\end{align*}
In the last inequality, we have also used Lemma \ref{prelim:lem_l2}. The estimate for \( 1 \leq r \leq \max(p,q) \) then follows from Hölder's inequality. Thus, it remains to treat the cases \( q=\infty \) and/or \( p=\infty \). This is a know technical issue, see \cite[Remark 3.8]{Bringmann18} for a discussion. Both cases can be reduced to the previous estimate by using Lemma \ref{prelim:lem_inf_removal} and Bernstein's inequality.  
\end{proof}

\begin{lem}[Probabilistic long-time decay]\label{prob:lem_long_time_decay}
Let \( f \in L_x^2(\mathbb{R}^d) \) and let \( f^\omega \) be its microlocal randomization. Furthermore, let \( 1 \leq q < \infty \) and \( 2 \leq p \leq \infty \) be such that 
\begin{equation}\label{prob:eq_condition}
\frac{1}{q} + \frac{d-1}{p} < \frac{d-1}{2}~. 
\end{equation}
Then, we have for all \( 1 \leq r < \infty \) that
\begin{equation}\label{prob:eq_long_time_decay}
\| \Htpm f_N^\omega \|_{L_\omega^r L_t^q L_x^p( \Omega \times [T,\infty) \times \reals^d)} \lesssim_{q,p} \sqrt{r} N^{\frac{1}{q}-s+} \left( 1 + \frac{T}{N} \right)^{\frac{1}{q}+\frac{d-1}{p}-\frac{d-1}{2}} \| f\|_{H_x^s(\rd)}. 
\end{equation}
\end{lem}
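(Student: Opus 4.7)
\textbf{Proof proposal for Lemma \ref{prob:lem_long_time_decay}.} The plan is to mimic the proof of the probabilistic Strichartz estimate (Lemma \ref{prob:lem_probabilistic_strichartz}), but exploit the time-decay factor in the Klainerman--Tataru refined dispersive estimate rather than just the $L_t^q$ Strichartz bound. I will first treat the ``good'' range $2 \leq p,q < \infty$ with $r \geq \max(p,q)$, then recover the remaining cases.

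In the good range, I would commute the norms by Minkowski's integral inequality (Lemma \ref{prelim:lem_minkowski}) to move $L_\omega^r$ inside $L_t^q L_x^p$, apply Khintchine (Lemma \ref{prelim:lem_khintchine}) on the independent Gaussians $X_{k,l}$, and then use Minkowski again to bring $\ell_{k,l}^2$ outside $L_t^q L_x^p$. This yields
\begin{equation*}
\| \Htpm f_N^\omega \|_{L_\omega^r L_t^q L_x^p([T,\infty)\times\rfour)} \lesssim \sqrt{r}\, \bigl\| \Htpm P_k(\varphi_l f) \bigr\|_{\ell_{k,l}^2 L_t^q L_x^p([T,\infty)\times \mathbb{R}^d)},
\end{equation*}
where the $\ell^2$ ranges over $\|k\|_\infty\in(N/2,N]$, $l\in \zd$. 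Now for each fixed $(k,l)$, I would apply Lemma \ref{prelim:lem_refined_dispersive} with $M=1$ (using $P_k = P_{1;k}$ as a projection), giving
\begin{equation*}
\bigl\| \Htpm P_k(\varphi_l f) \bigr\|_{L_x^p} \lesssim \bigl(1+|t|/N\bigr)^{-(d-1)(1/2-1/p)} \bigl\| P_k(\varphi_l f) \bigr\|_{L_x^{p'}}.
\end{equation*}

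Raising to the $q$-th power and integrating in $t$ over $[T,\infty)$, the condition \eqref{prob:eq_condition} is precisely what makes $q(d-1)(1/2-1/p)>1$, so the integral converges and evaluates to
\begin{equation*}
\Bigl(\int_T^\infty (1+t/N)^{-q(d-1)(1/2-1/p)}\,dt\Bigr)^{1/q} \lesssim N^{1/q}\bigl(1+T/N\bigr)^{1/q+(d-1)/p - (d-1)/2}.
\end{equation*}
Plugging this into the previous display and invoking the $\ell^2_{k,l}$ estimate of Lemma \ref{prelim:lem_l2} (applied with the exponent $p'$), the spatial factor becomes $\lesssim \|\widetilde{P}_N f\|_{L_x^2} + N^{-s-10d}\|f\|_{H_x^s} \lesssim N^{-s}\|f\|_{H_x^s}$ via Bernstein, which produces exactly the advertised bound (with a clean $N^{1/q-s}$, no $0+$) in this parameter range. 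For $1\leq r<\max(p,q)$, Hölder in $\omega$ gives the estimate.

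The endpoints $p=\infty$ and/or $q=\infty$ are the only genuine technical points. For $q=\infty$, I would invoke Lemma \ref{prelim:lem_inf_removal} to trade the $L_t^\infty$ norm for an $L_t^{q_0}$ norm at cost $N^{1/q_0}$ for some large finite $q_0$, then apply the case already established; this is the source of the $N^{0+}$ factor. For $p=\infty$, I would use Bernstein's inequality (Lemma \ref{prelim:lem_bernstein}) to pass from $L_x^\infty$ to $L_x^{p_0}$ at cost $N^{d/p_0}$, again absorbing the loss into $N^{0+}$; note that the time-decay exponent $1/q + (d-1)/p - (d-1)/2$ is strictly negative with room to spare by \eqref{prob:eq_condition}, so the slight mismatch between $p_0$ and $\infty$ in the decay rate can also be absorbed into the $0+$. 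I expect the main bookkeeping obstacle to be verifying that these endpoint reductions preserve the correct power of $(1+T/N)$, which requires choosing $p_0,q_0$ large enough that the degraded decay rate is still strictly faster than the target rate; since \eqref{prob:eq_condition} is a strict inequality this is automatic.
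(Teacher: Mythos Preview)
Your argument for the range $2 \leq p,q < \infty$ with $r \geq \max(p,q)$ is correct and matches the paper's proof exactly, as does the reduction of $p=\infty$ to finite $p$ via Bernstein.

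However, you have misidentified the remaining case. The statement assumes $1 \leq q < \infty$, so there is no $q=\infty$ endpoint to treat; your invocation of Lemma~\ref{prelim:lem_inf_removal} is irrelevant here. What you have \emph{not} covered is the range $1 \leq q < 2$. In that range your direct argument breaks down at the Minkowski step where you move $\ell_{k,l}^2$ outside of $L_t^q L_x^p$: this requires $q \geq 2$ (and $p \geq 2$), so you cannot reach the refined dispersive estimate pointwise in $(k,l)$ when $q<2$.

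The paper handles $1 \leq q < 2$ by a dyadic decomposition in time. For $T \geq N$ one writes $[T,\infty) = \bigcup_{m\geq 0} [2^m T, 2^{m+1}T)$, applies H\"older on each piece to upgrade $L_t^q$ to $L_t^2$ at cost $(2^m T)^{1/q-1/2}$, invokes the already-proven $q=2$ bound, and sums the resulting geometric series; convergence is exactly guaranteed by the strict inequality in \eqref{prob:eq_condition}. The contribution of $[T,N)$ when $T \leq N$ is handled separately by a single application of H\"older and the $q=2$ estimate on $[0,N)$. You should replace your $q=\infty$ discussion with this argument.
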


Lemma \ref{prob:lem_long_time_decay} has previously been used for a physical space randomization in \cite[Proposition 3.1]{Murphy17}. In contrast to the standard Strichartz estimates, which are time-translation invariant, \eqref{prob:eq_long_time_decay} provides a quantitative decay rate. The motivation behind this estimate is illustrated in Figure \ref{figure:physical}. In this paper, we only require the following special case. 
\begin{cor}\label{prob:cor_long_time_decay}
Let \( f \in L_x^2(\mathbb{R}^4) \) and let \( f^\omega \) be its microlocal randomization. Then, we have for all \( \theta >0 \) that 
\begin{equation}
\| \Htpm f_N^\omega \|_{L_\omega^r L_t^1 L_x^\infty(\Omega \times [N^{1+\theta},\infty) \times \reals^4)} \lesssim \sqrt{r} N^{1-\frac{\theta}{2}+} \| f_N \|_{L_x^2(\rfour)}~.
\end{equation}
\end{cor}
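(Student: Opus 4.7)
The plan is to derive Corollary \ref{prob:cor_long_time_decay} as a direct specialization of Lemma \ref{prob:lem_long_time_decay}. First I would fix $d=4$, $q=1$, $p=\infty$, and $s=0$, and verify the admissibility hypothesis \eqref{prob:eq_condition}: indeed
\begin{equation*}
\frac{1}{q} + \frac{d-1}{p} = 1 + 0 = 1 < \tfrac{3}{2} = \frac{d-1}{2}~,
\end{equation*}
so the lemma applies with this choice of exponents.

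Next I would compute the two factors appearing on the right-hand side of \eqref{prob:eq_long_time_decay} for $T = N^{1+\theta}$. The exponent of the time-decay factor becomes
\begin{equation*}
\frac{1}{q} + \frac{d-1}{p} - \frac{d-1}{2} = 1 - \tfrac{3}{2} = -\tfrac{1}{2}~,
\end{equation*}
so that $(1 + T/N)^{-1/2} = (1 + N^{\theta})^{-1/2} \simeq N^{-\theta/2}$. Combined with the frequency factor $N^{\frac{1}{q} - s +} = N^{1+}$, the product is exactly $N^{1 - \theta/2 +}$, matching the stated bound.

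Finally, to get $\|f_N\|_{L_x^2}$ rather than $\|f\|_{H_x^s}$ on the right-hand side, I would apply Lemma \ref{prob:lem_long_time_decay} not to $f$ itself but to $f_N$, regarded as an $L_x^2(\reals^4)$-function (hence with $s=0$). The point is that the microlocal randomization $(f_N)^\omega$, when further projected to frequencies in $(N/2,N]$, agrees (up to rapidly decaying mismatch terms of the type controlled in Lemma \ref{prelim:lem_mismatch} and Lemma \ref{prelim:lem_l2}) with $f_N^\omega$; the error terms are trivially absorbed into the $N^{1-\theta/2+}$ bound. This substitution turns $\|f\|_{H_x^s}$ into $\|f_N\|_{L_x^2}$, which yields the corollary.

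There is no real obstacle here: the whole content of the corollary is tracking the two exponents through \eqref{prob:eq_long_time_decay}. The only mildly delicate point is the frequency-localization argument that allows one to replace the global $H^s$-norm of $f$ by the $L^2$-norm of the single dyadic piece $f_N$, but this is built into the structure of the microlocal randomization and the square-function estimate already used in the proof of Lemma \ref{prob:lem_long_time_decay}.
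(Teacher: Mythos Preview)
Your proposal is correct and matches the paper's intended (unstated) argument: the corollary is obtained by specializing Lemma~\ref{prob:lem_long_time_decay} to $d=4$, $q=1$, $p=\infty$, $T=N^{1+\theta}$, and reading off the exponents exactly as you do. Your observation about replacing $\|f\|_{H_x^s}$ by $\|f_N\|_{L_x^2}$ is also on point---this refinement is already implicit in the proof of the lemma, since the $\ell_{k,l}^2$-estimate (Lemma~\ref{prelim:lem_l2}) there actually yields $\|\widetilde{P}_N f\|_{L_x^2}$ plus a negligible tail, rather than the full $H_x^s$-norm.
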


\begin{figure}[t!]
\begin{center}
\includegraphics[height=5cm]{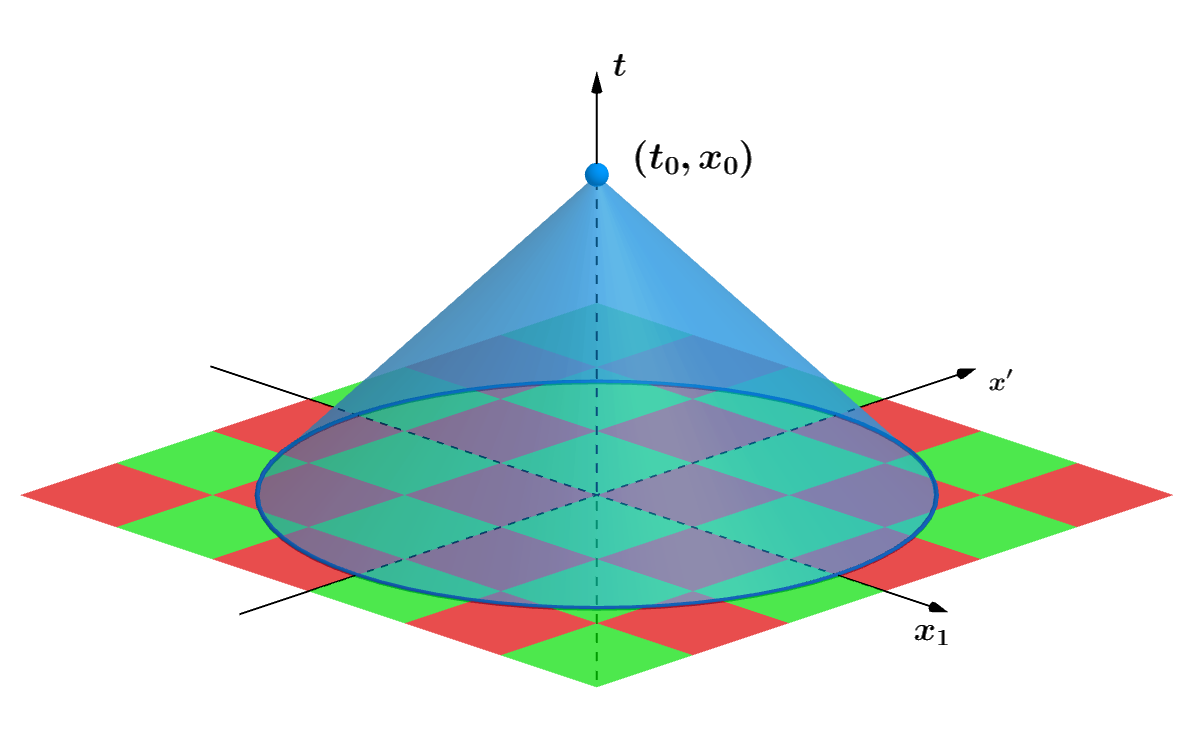}
\end{center}
\caption*{\small{This figure illustrates the effect of the physical randomization on the linear evolution. At the point \( (t_0,x_0) \), the linear evolution depends on the initial data in a large region of space. Due to the physical randomization, the initial data in different spatial regions cannot constructively interfere, and hence we expect an improved decay.  }}
\caption{Effect of physical randomization}
\label{figure:physical}
\end{figure}

\begin{rem}
Due to \eqref{prob:eq_condition}, the \( L_t^1 L_x^\infty \)-estimate fails logarithmically in three dimensions. 
\end{rem}

\begin{proof}[Proof of Lemma \ref{prob:lem_long_time_decay}]
We essentially follow the argument in \cite{Murphy17}. Let us first assume that \( 2\leq q,p < \infty \). \\
We further assume that \( r \geq \max(q,p) \), the corresponding estimate for \( 1\leq r < \max(q,p) \) then follows from Hölder's inequality. Using Minkowski's integral inequality (Lemma \ref{prelim:lem_minkowski}), Khintchine's inequality (Lemma \ref{prelim:lem_khintchine}), and the refined dispersive estimate (Lemma \ref{prelim:lem_refined_dispersive}), we have that 
\begin{align*}
&\| \Htpm f_N^\omega \|_{L_\omega^r L_t^q L_x^p( \Omega \times [T,\infty) \times \reals^d)} \displaybreak[1] \\
&\leq \| \Htpm f_N^\omega  \|_{L_t^q L_x^p L_\omega^r(  [T,\infty) \times \reals^d \times \Omega)} \displaybreak[1]\\
&\lesssim \sqrt{r} \| \Htpm P_k(\varphi_l f) \|_{L_t^q L_x^p \ell_{k,l}^2([T,\infty)\times \reals^d \times \mathbb{Z}^{d+d})} \displaybreak[1]\\
&\lesssim \sqrt{r} \| \Htpm P_k(\varphi_l f) \|_{\ell_{k,l}^2 L_t^q L_x^p (\mathbb{Z}^{d+d} \times [T,\infty)\times \reals^d )}\displaybreak[1]\\
&\lesssim \sqrt{r} \left\| \Big( 1+ \frac{|t|}{N} \Big)^{-(d-1) (\frac{1}{2}-\frac{1}{p})} \| P_k(\varphi_l f) \|_{L_x^{p^\prime}} \right\|_{\ell_{k,l}^2 L_t^q(\mathbb{Z}^{d+d} \times [T,\infty))}\displaybreak[1] \\
&\lesssim \sqrt{r} \left\|\Big( 1+ \frac{|t|}{N} \Big)^{-(d-1) (\frac{1}{2}-\frac{1}{p})}  \right\|_{L_t^q([T,\infty))} \| P_k(\varphi_l f) \|_{\ell_{k,l}^2 L_x^{p^\prime}(\mathbb{Z}^{d+d}\times \reals^d)}~. 
\end{align*}
Using condition \eqref{prob:eq_condition}, we obtain
\begin{equation*}
 \left\|\Big( 1+ \frac{|t|}{N} \Big)^{-(d-1) (\frac{1}{2}-\frac{1}{p})}  \right\|_{L_t^q([T,\infty))} \lesssim N^{\frac{1}{q}} \left( 1 + \frac{T}{N} \right)^{\frac{1}{q}+\frac{d-1}{p}-\frac{d-1}{2}}
 \end{equation*}
 Finally, from Lemma \ref{prelim:lem_l2} we have that 
 \begin{equation*}
 \| P_k(\varphi_l f) \|_{\ell_{k,l}^2 L_x^{p^\prime}(\mathbb{Z}^{d+d}\times \reals^d)} \lesssim N^{-s} \| f\|_{H_x^s}~. 
 \end{equation*}
 This finishes the proof in the case \( 2 \leq q,p < \infty \). Using Bernstein's inequality, we can reduce the case \( p = \infty \) to \( p< \infty \).  Thus, it remains to treat the range \( 1 \leq q < 2 \). Using a dyadic decomposition in time, we have for all \( T \geq N \) that 
 \begin{align*}
 &\| \Htpm f_N^\omega \|_{L_\omega^r L_t^q L_x^p( \Omega \times [T,\infty) \times \reals^d)}\displaybreak[1] \\
 &\leq \sum_{m=0}^{\infty} \| \Htpm f_N^\omega \|_{L_\omega^r L_t^q L_x^p( \Omega \times [2^m T,2^{m+1}T) \times \reals^d)} \displaybreak[1]\\
 &\leq \sum_{m=0}^{\infty}  (2^{m} T)^{\frac{1}{q}-\frac{1}{2}} \| \Htpm f_N^\omega \|_{L_\omega^r L_t^2 L_x^p( \Omega \times [2^m T,2^{m+1}T) \times \reals^d)}\displaybreak[1]\\
 &\lesssim \sum_{m=0}^\infty (2^{m} T)^{\frac{1}{q}-\frac{1}{2}}  N^{\frac{1}{2}-s+} \left( \frac{2^m T}{N} \right)^{\frac{1}{2}+\frac{d-1}{p}-\frac{d-1}{2}}\| f\|_{H_x^s}\displaybreak[1]\\
 &\lesssim N^{\frac{1}{q}-s+} \left(\frac{T}{N} \right)^{\frac{1}{q}+\frac{d-1}{p}-\frac{d-1}{2}}\| f\|_{H_x^s}~. 
 \end{align*}
 In the second last line, we used condition \eqref{prob:eq_condition}. For \( T \leq N \), we also have that 
 \begin{align*}
  \| \Htpm f_N^\omega \|_{L_\omega^r L_t^q L_x^p( \Omega \times [T,N) \times \reals^d)}  
  \lesssim N^{\frac{1}{q}- \frac{1}{2}} \| \Htpm f_N^\omega \|_{L_\omega^r L_t^2 L_x^p( \Omega \times [0,N) \times \reals^d)} 
  \lesssim  N^{\frac{1}{q}-s+}\| f\|_{H_x^s}~. 
 \end{align*}

\end{proof}

\begin{definition}[Auxiliary norm]\label{prop:def_auxiliary}
Let \( 0\leq s < 1 \), let \( (f_0,f_1)\in H^s(\rfour)\times H^{s-1}(\rfour) \), and let \( N_0\geq 1\).  We then define
\begin{align*}
\| (f_0,f_1) \|_{Z(N_0)} &:= \sum_{N\geq N_0} N^{s+\frac{\theta}{2}-1-\delta} \left \| \cos(t|\nabla|)  f_{0,N} + \frac{\sin(t|\nabla|)}{|\nabla|}  f_{1,N} \right \|_{L_t^1 L_x^\infty([N^{1+\theta},\infty)\times \rfour)}\\
	&~+\sum_{N\geq N_0} N^{s-\delta}  \left \| \cos(t|\nabla|) f_{0,N} + \frac{\sin(t|\nabla|)}{|\nabla|} f_{1,N} \right \|_{L_t^\infty L_x^\infty([0,\infty)\times \rfour)}
\end{align*}
\end{definition}

From Proposition \ref{prob:lem_probabilistic_strichartz} and Corollary \ref{prob:cor_long_time_decay}, it follows that 
\begin{equation*}
\| (f_0^\omega,f_1^\omega) \|_{L_\omega^r Z(1)} \lesssim \sqrt{r} \| (f_0,f_1) \|_{H_x^s\times H_x^{s-1}}~. 
\end{equation*}

\section{Wave packet decomposition}\label{section:wp}
In this section, we use a wave packet decomposition to better understand the (random) linear evolution. \\
This part of the argument does not rely on the additional randomization in physical space. We therefore phrase all results in a way that applies to both the microlocal and the Wiener randomization, and hope that this 
facilitates future applications. With this in mind, we now rewrite the microlocal randomization in a form that resembles the Wiener randomization. \\
Let the random variables \( \{ X_{k,l} \}_{k,l \in \zd} \) be as in Definition \ref{intro:def_microlocal}, let \( \{ \epsilon_k \}_{k\in I\cup \{ 0 \}} \) be a family of independent random signs, and set \( \epsilon_{-k} = \epsilon_k \) for all \( k \in I \). We can then define \( Y_{k,l} := \epsilon_k X_{k,l} \). For a sequence of multi-indices \( k,l_1,\hdots,l_J\in \zd \) and any sequence of Borel-measurable sets \( A_1,\hdots,A_J \subseteq \reals\), we have that 
\begin{align*}
\mathbb{P}(\epsilon_k =1 , Y_{k,l_1}\in A_1,\hdots,Y_{k,l_J} \in A_J) =\mathbb{P}(\epsilon_k =1 , X_{k,l_1}\in A_1,\hdots,X_{k,l_J} \in A_J)   \\
= \mathbb{P}(\epsilon_k =1) \prod_{j=1}^J \mathbb{P}(X_{k,l_j}\in A_j)= \mathbb{P}(\epsilon_k =1) \prod_{j=1}^J \mathbb{P}(Y_{k,l_j}\in A_j) ~.
\end{align*}
In the last equality, we have used that the random variables \( X_{k,l} \) are symmetric. Therefore, for a fixed \( k \in \zd \), the family \( \{ \epsilon_k \} ~ \mathsmaller{\bigcup} ~ \{ Y_{k,l} \}_{l\in \zd} \) is independent.  
From this, it then easily follows that the whole family \( \{ \epsilon_k \}_{k\in I \cup \{ 0 \}} ~ \mathsmaller{\bigcup} ~ \{ Y_{k,l} \}_{k \in I \cup \{ 0 \}, l\in \zd} \) is independent. We then rewrite the microlocal randomization as 
\begin{equation}\label{wp:eq_microlocal_to_wiener}
f^\omega = \sum_{k,l\in \zd} X_{k,l} P_k(\varphi_l f) = \sum_{k\in \zd} \epsilon_k P_k( \sum_{l\in \zd} Y_{k,l} \varphi_l f ) =\sum_{k\in \zd} \epsilon_k f_k, \qquad \text{where} \quad f_k := P_k  (\sum_{l\in \zd} Y_{k,l} \varphi_l f)~.
\end{equation}
Due to the independence properties discussed above, we can regard the functions \( \{ f_k \} \) as deterministic by conditioning on the random variables \( \{ Y_{k,l} \}_{k,l} \), and only utilize the randomness through the random signs \( \{ \epsilon_k \}_k \). Note that \eqref{wp:eq_microlocal_to_wiener} closely resembles the Wiener randomization.  \\
To motivate the wave packet decomposition below, we now rewrite the linear evolution with initial data \( (f_0^\omega,f_1^\omega) \). Using the notation from \eqref{wp:eq_microlocal_to_wiener}, we first introduce the half-wave operators by writing
\begin{equation}\label{wp:eq_half_wave_decomposition}
\begin{aligned}
&\cos(t|\nabla|) f_0^\omega + \frac{\sin(t|\nabla|)}{|\nabla|} f_1^\omega \\
&= \sum_{k\in \zd} \epsilon_k \left(  \cos(t|\nabla|) f_{0;k}^\omega + \frac{\sin(t|\nabla|)}{|\nabla|} f_{1;k}^\omega \right) \\
&= \sum_{k\in \zd} \epsilon_k \left [ \exp(it|\nabla|) \bigg( \frac{f_{0;k}+ i |\nabla|^{-1} f_{1;k}}{2} \bigg) + \exp(-it|\nabla|) \bigg( \frac{f_{0;k}- i |\nabla|^{-1} f_{1;k}}{2} \bigg) \right] \\
&=: \sum_{k\in \zd} \epsilon_k \left[ \exp(it|\nabla|) f_k^+ + \exp(-it|\nabla|) f_k^- \right]~. 
\end{aligned}
\end{equation}
As in \eqref{prelim:eq_frequency_decomposition}, we also decompose dyadically in frequency space, and write 
\begin{equation}\label{wp:eq_FN}
F_N^{\pm}:=  \sum_{\| k\|_\infty \in (N/2,N]} \epsilon_k \exp(\pm it|\nabla|) f_{k}^\pm \qquad \text{and} \qquad F_N := F_N^{+} + F_N^{-}~. 
\end{equation}
Let \( k \in \mathbb{Z}^d \) with \( \| k\|_\infty \in (N/2,N] \), and let \( l \in \mathbb{Z}^d \). We define the tubes \( T_{k,l}^\pm \) by
\begin{equation}
T_{k,l}^+ := \{ (t,x)\in [0,N] \times \reals^d \colon \| x- ( l \mp t \cdot k/\|k\|_2 ) \|_2 \leq 1 \}~.  \label{wp:eq_tube}
\end{equation}
Here, the superscripts \( \pm \) are chosen so that the tubes correspond to the operators \( \Htpm \). The dimensions and the shape of the tubes are illustrated in the introduction, see Figure \ref{figure:single_wave_packet}. Motivated by the Doppler effect, the tubes \( T_{k,l}^+ \) are sometimes called red tubes, and the tubes \( T_{k,l}^- \) are sometimes called blue tubes. 

\begin{prop}[Spatial wave packet decomposition]\label{wp:prop_wave_packet_decomposition}
Let \( k \in \mathbb{Z}^d \) with \( \| k\|_\infty \in (N/2,N] \). Let \(f_k \in \lxtwo \) be a function such that \( \supp \widehat{f_k}\subseteq k + [-1,1]^d \). Then, there exists a decomposition 
\begin{equation*}
f_k = \sum_{l\in \mathbb{Z}^d} f_{k,l}
\end{equation*}
such that
\begin{enumerate}
\item \label{wp:item_supp}  \( \supp \widehat{f_{k,l}}\subseteq k+[-4,4]^d \) for all \( l \in \mathbb{Z^d} \), 
\item\label{wp:item_orthogonality} the family \( \{ f_{k,l} \}_{l\in \zd} \) satisfies the almost-orthogonality condition\begin{equation}\label{wp:eq_orthogonality}
\sum_{l\in \mathbb{Z}^d} \| f_{k,l} \|_{\lxtwo}^2 \lesssim \| f \|_{\lxtwo}^2~, 
\end{equation}
\item \label{wp:item_decay} and for any \( D \geq 1 \), any \( l \in \mathbb{Z}^d \), and all \( (t,x)\in [0,N]\times \reals^d \), it holds that 
\begin{equation}\label{wp:eq_decay}
| \exp(\pm it|\nabla|) f_{k,l}(x) |\lesssim_D (1+ \dist((t,x), T_{k,l}^{\pm}) )^{-D} \| f_k \|_{\lxtwo} ~.  
\end{equation}
\end{enumerate}
\end{prop}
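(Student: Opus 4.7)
The plan is to decompose $f_k$ spatially at unit scale, apply a fattened frequency projection to enforce the support condition, and then read off the tube concentration from a standard non-stationary-phase analysis of the evolution kernel. Concretely, let $\{\eta_l\}_{l\in\zd}$ be the smooth partition of unity $\eta_l(x) = \varphi(x-l)$ from Section \ref{section:prelim}, and let $\widetilde{P}_k^{\,\mathrm{fat}}$ be the Fourier multiplier with a smooth symbol $\widetilde{\chi}_k$ equal to $1$ on $k+[-2,2]^d$ and supported in $k+[-4,4]^d$. I would define
\begin{equation*}
f_{k,l} := \widetilde{P}_k^{\,\mathrm{fat}}(\eta_l f_k).
\end{equation*}
Since $\supp\widehat{f_k}\subseteq k+[-1,1]^d$ we have $\widetilde{P}_k^{\,\mathrm{fat}} f_k = f_k$, so $\sum_l f_{k,l} = \widetilde{P}_k^{\,\mathrm{fat}}(\sum_l \eta_l f_k) = f_k$. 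Item \ref{wp:item_supp} is immediate from the definition. For item \ref{wp:item_orthogonality}, the $L^2$-boundedness of $\widetilde{P}_k^{\,\mathrm{fat}}$ and the bounded overlap of the partition of unity give
\begin{equation*}
\sum_l \| f_{k,l} \|_{L_x^2}^2 \leq \sum_l \| \eta_l f_k \|_{L_x^2}^2 = \int \Big(\sum_l |\eta_l(x)|^2\Big) |f_k(x)|^2 \dx \lesssim \| f_k \|_{L_x^2}^2.
\end{equation*}

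The main work is the tube decay \ref{wp:item_decay}, for which I would analyze the kernel
\begin{equation*}
K_t^{\pm}(z) = c_d \int_{\reals^d} e^{iz\cdot \xi \pm it|\xi|} \widetilde{\chi}_k(\xi) \,d\xi
\end{equation*}
of $\exp(\pm it|\nabla|)\widetilde{P}_k^{\,\mathrm{fat}}$. The plan is to change variables $\xi = k + \zeta$ with $\zeta \in [-4,4]^d$, extract the phase $e^{ik\cdot z}$, and study the reduced phase $\phi(\zeta) = z\cdot\zeta \pm t|k+\zeta|$. Its gradient equals $z\pm t\,k/\|k\|_2 + O(t/N)$ uniformly in $\zeta$, using the elementary bound $|(k+\zeta)/|k+\zeta| - k/\|k\|_2| \lesssim |\zeta|/N$ valid since $\|k\|_\infty\sim N$ and $|\zeta|\leq 4$. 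Because $t \leq N$, the error is $O(1)$, so whenever $|z\pm t\,k/\|k\|_2| \geq C$ for some absolute $C$ we have $|\nabla_\zeta \phi|\gtrsim |z\pm t\,k/\|k\|_2|$. Standard non-stationary phase (integrating by parts $D$ times with the operator $L = |\nabla\phi|^{-2}\nabla\phi\cdot\nabla$) together with the trivial bound $|K_t^\pm(z)|\lesssim 1$ for small $|z\pm tk/\|k\|_2|$ yields
\begin{equation*}
|K_t^{\pm}(z)| \lesssim_D \big(1 + |z\pm t\,k/\|k\|_2|\big)^{-D}, \qquad t\in[0,N].
\end{equation*}

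To conclude, I would write $\exp(\pm it|\nabla|) f_{k,l} = K_t^\pm * (\eta_l f_k)$ and apply Cauchy--Schwarz:
\begin{equation*}
|\exp(\pm it|\nabla|) f_{k,l}(x)| \leq \|f_k\|_{L_x^2} \Big(\int |K_t^\pm(x-y)|^2 |\eta_l(y)|^2\, dy\Big)^{1/2}.
\end{equation*}
For $y\in\supp\eta_l\subseteq l+[-1,1]^d$, the triangle inequality gives $|x-y\pm t\,k/\|k\|_2| \geq |x - (l\mp tk/\|k\|_2)| - O(1) \gtrsim 1 + \dist((t,x), T_{k,l}^{\pm})$, so the kernel bound yields the desired estimate. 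The main obstacle is the phase analysis: one has to check carefully that the angular dispersion of the propagator is $O(1)$ on the time scale $[0,N]$, which is exactly what confines the propagation to a unit-width tube; once this is established, the remaining ingredients are essentially bookkeeping with Cauchy--Schwarz and the partition of unity.
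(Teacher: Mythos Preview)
Your proposal is correct and follows essentially the same approach as the paper: define $f_{k,l}$ as a fattened frequency projection applied to $\varphi_l f_k$, read off items \ref{wp:item_supp} and \ref{wp:item_orthogonality} directly, and obtain \ref{wp:item_decay} via non-stationary phase on the phase $z\cdot\zeta\pm t|k+\zeta|$, using that $\nabla_\zeta$ of this phase equals $z\pm tk/\|k\|_2+O(1)$ on $[0,N]$. The only cosmetic difference is that you package the stationary-phase step as a pointwise kernel bound followed by Cauchy--Schwarz, whereas the paper writes out the full oscillatory integral for $\exp(\pm it|\nabla|)f_{k,l}(x)$ and integrates by parts there; the paper also records explicitly that the higher $\zeta$-derivatives of the phase are $O(t/|k|^{|\alpha|-1})=O(1)$ for $|\alpha|\geq 2$, which is the routine check underlying your appeal to ``standard non-stationary phase.''
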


Wave packet decomposition as in Proposition \ref{wp:prop_wave_packet_decomposition} have been used extensively in the literature, see e.g. \cite{Bourgain91,Cordoba77,Fefferman73,Guth16,Wolff01} and the survey \cite{Tao04b}. We present the details below, but encourage the expert reader to skip ahead to the end of the proof. 

\begin{proof}
We define the fattened projection
\begin{equation*}
\widetilde{P}_k := \sum_{\|k^\prime -k\|_\infty \leq 2} P_{k^\prime}~. 
\end{equation*}
Then, it holds that 
\begin{equation*}
f= \widetilde{P}_k f = \sum_{l\in \mathbb{Z}^d} \widetilde{P}_k( \varphi_l f ) =: \sum_{l\in \mathbb{Z}^d} f_{k,l}~. 
\end{equation*}
The frequency support condition \ref{wp:item_supp} directly follows from the definition of \( \widetilde{P}_k \). Furthermore, the almost orthogonality \ref{wp:item_orthogonality} follows from
\begin{equation*}
\sum_{l\in \zd } \| f_{k,l} \|_{\lxtwo}^2 \lesssim \sum_{l\in \zd} \| \varphi_l f_k\|_{\lxtwo}^2 \lesssim \| f_k \|_{\lxtwo}^2~. 
\end{equation*}
Thus, it remains to prove the decay estimate \ref{wp:item_decay}. We only treat the operator \( \exp(it|\nabla|) \), since the proof for \( \exp(-it|\nabla|) \) is similar. If \( N \lesssim 1 \), the estimate is trivial.  Thus, we may assume that \( N \gg 1 \). The argument is based on the method of non-stationary phase. For all \( t\in [0,N] \) and \( x \in \rd \), we have that 
\begin{align*}
&\exp(it|\nabla|) f_{k,l}(x) \\
&= \frac{1}{(2\pi)^{\frac{d}{2}}} \int_{\rd} \exp(ix\xi+ i t |\xi|) \widetilde{\psi}(\xi-k) (\widehat{\varphi}_l * \widehat{f})(\xi) \dxi \\
&= \frac{1}{(2\pi)^d} \int_{\rd} \int_{\rd} \int_{\rd} \exp(ix\xi+ i t |\xi|) \widetilde{\psi}(\xi-k) \exp(-iy(\xi-\eta)) \varphi(y-l) \widehat{f}(\eta) \dy \deta \dxi \\
&= \frac{1}{(2\pi)^d} \exp(ixk+it|k|) \int_{\rd} K(\eta;t,x) \exp(-il(k-\eta)) \widehat{f}(\eta) \deta~,
\end{align*}
where the kernel \( K(\eta;t,x) \) is given by 
\begin{equation*}
K(\eta;t,x):= \int_\rd \int_\rd  \exp(i(x-l) \xi+ i t (|\xi+k|-|k|))  \exp(-iy(\xi+k-\eta)) \widetilde{\psi}(\xi) \varphi(y) \dxi \dy ~. 
\end{equation*}
Since \( \supp \varphi \subseteq [-1,1]^d \), the function \( a(\xi;y,\eta):= \exp(-iy(\xi+k-\eta)) \widetilde{\psi}(\xi) \varphi(y) \) has uniformly bounded derivatives in \( \xi \), i.e, we have for all \( \alpha \in \mathbb{N}_0^d \) that \( |\partial_\xi^\alpha a(\xi;y,\eta)|\lesssim_\alpha 1 \). Using the support conditions in the variables \( y\) and \( \eta \), it thus suffices to prove for all \( a \in C^\infty_c([-2,2]^d) \) that
\begin{equation}\label{wp:eq_non_stationary}
\left| \int_{\rd} \exp(i(x-l) \xi+ i t (|\xi+k|-|k|))  a(\xi) \dxi \right| \lesssim_M ( 1+ | x - l + t \cdot k/\| k\|_2 )^{-M} ~. 
\end{equation}
Due to the compact support of \( a(\xi) \), we restrict to \( |\xi|\leq 2 \). The bound for \( {|x-l + t \cdot k/\|k\|_2 |\lesssim 1}\)  is trivial. Thus, we may assume that \({ |x-l+ t \cdot k/\| k\|_2 |\gg 1 }\). We define the phase function 
\begin{equation*}
\Phi_k(\xi)= \Phi_k(\xi;t,x,l)= i(x-l) \xi+ i t (|\xi+k|-|k|)~. 
\end{equation*}
Then, we have that 
\begin{align*}
\nabla_\xi \Phi_k(\xi)  
&=\nabla_\xi  \Big( (x-l) \xi + t ( |\xi+k| - |k|) \Big) \\
&= x-l + t \frac{\xi+k}{|\xi+k|} \\
&= x-l + t \frac{k}{|k|} + t \Big( \frac{|k|-|\xi-k|}{| \xi+k| |k|}~  k + \frac{\xi}{|\xi+k|} \Big) ~. 
\end{align*}
From the assumption \( |t|\leq N \), it follows that \( \nabla_\xi \Phi_k = x-l + t k/|k| + \mathcal{O}(1) \). We also write \begin{equation*}
 \nabla_\xi \Phi_k (\xi)=  x-l + t \frac{k}{|k|} + t \Psi_k \big(\frac{\xi}{|k|}\big) \frac{k}{|k|}~,
 \end{equation*}
 where 
\begin{equation*}
\Psi_k(\nu):= \frac{1-|\nu-k/|k||}{|\nu+k/|k||} + \frac{\nu}{|k/|k|+\nu|}~.
\end{equation*}
From rotation invariance, it follows easily that \( |\nabla^\alpha \Psi_k(\nu)| \lesssim_\alpha 1\) for all \( |\nu|\leq 1/10 \), uniformly in \( k \). This leads to \begin{equation*}
|\nabla^\alpha \Phi_k(\xi)|\lesssim_\alpha \frac{|t|}{|k|^{|\alpha|-1}} \lesssim 1 \qquad \text{for all } |\alpha|\geq 2~.  
\end{equation*}
We then rewrite the integral in \eqref{wp:eq_non_stationary} as 
\begin{align*}
&\int_{\rd} \exp(i(x-l) \xi+ i t (|\xi+k|-|k|))  a(\xi) \dxi \\
&= \int_{\rd} \left( \left( \frac{(-i) \nabla_\xi \Phi_k(\xi) \nabla_\xi}{|\nabla \Phi_k(\xi)|^2} \right)^{M} \exp(i\Phi_k(\xi)) \right) \cdot a(\xi)\dxi \\
&= \int_{\rd} \exp(i\Phi_k(\xi))  \left( \nabla_\xi \cdot \frac{i \nabla_\xi \Phi_k(\xi)}{|\nabla \Phi_k(\xi)|^2} \right)^{M} a(\xi) \dxi~. 
\end{align*}
The inequality \eqref{wp:eq_non_stationary} then follows from the bounds on the phase function above. 

\end{proof}

The wave packet decomposition in Proposition \ref{wp:prop_wave_packet_decomposition} is valid on the time interval \( [0,N] \), and the physical localization deteriorates for larger times. When analyzing the linear evolution on an interval of the form \( [t_0,t_0+N) \), with \( t_0\in N\mathbb{N}_0 \), we therefore use the wave packet decomposition of \( \exp(\pm it_0 |\nabla|) f_k \). To state the result, we set
\begin{equation*}
T_{k,l;t_0}^{\pm} := \{ (t,x) \in [t_0,t_0+N]\times \rd \colon  \| x- ( l \mp (t-t_0) \cdot k/\|k\|_2 ) \|_2 \leq 1 \}~.  
\end{equation*}

\begin{cor}[Time-translated spatial wave packet decomposition]\label{wp:cor_wave_packet_decomposition}
Let \( k \in \mathbb{Z}^d \) with \( \| k\|_\infty \in (N/2,N] \), and let \( t_0 \in N \mathbb{N}_0 \). Let \(f_k \in \lxtwo \) be a function satisfying \( \supp \widehat{f_k}\subseteq k + [-1,1]^d \). Then, there exists a decomposition 
\begin{equation*}
\exp( \pm i t_0 |\nabla|) f_k = \sum_{l\in \mathbb{Z}^d} f_{k,l;t_0}^\pm
\end{equation*}
such that
\begin{enumerate}
\item  \( \supp \widehat{f_{k,l;t_0}^\pm}\subseteq k+[-2,2]^d \) for all \( l \in \mathbb{Z^d} \), 
\item the family \( \{ f_{k,l;t_0}^\pm \}_{l\in \zd} \) satisfies the almost-orthogonality condition\begin{equation}\label{wp:eq_orthogonality}
\sum_{l\in \mathbb{Z}^d} \| f_{k,l;t_0}^\pm \|_{\lxtwo}^2 \lesssim \| f \|_{\lxtwo}^2~, 
\end{equation}
\item and for any \( D \geq 1 \), any \( l \in \mathbb{Z}^d \), and all \( (t,x)\in [t_0,t_0+N]\times \reals^d \), it holds that 
\begin{equation}\label{wp:eq_decay}
| \exp(\pm i(t-t_0)|\nabla|) f_{k,l;t_0}^\pm(x) |\lesssim_D (1+ \dist((t,x), T_{k,l;t_0}^{\pm}) )^{-D} \| f_k \|_{\lxtwo} ~.  
\end{equation}
\end{enumerate}
\end{cor}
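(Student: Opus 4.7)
The strategy is to reduce the corollary to Proposition \ref{wp:prop_wave_packet_decomposition} via a time translation. The key observation is that $\exp(\pm it_0|\nabla|)$ is a Fourier multiplier and an $L^2$-isometry, so the time-translated initial datum
\[
g_k := \exp(\pm it_0|\nabla|) f_k
\]
satisfies $\supp \widehat{g_k} \subseteq k + [-1,1]^d$ and $\|g_k\|_{L^2} = \|f_k\|_{L^2}$. Thus $g_k$ verifies exactly the hypotheses of Proposition \ref{wp:prop_wave_packet_decomposition}.

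Applying the proposition to $g_k$ (with the corresponding choice of sign $\pm$) yields a decomposition $g_k = \sum_{l\in \zd} g_{k,l}$ obeying the three properties of Proposition \ref{wp:prop_wave_packet_decomposition} on the time interval $[0,N]$. Setting $f_{k,l;t_0}^\pm := g_{k,l}$, the identity $\sum_l f_{k,l;t_0}^\pm = \exp(\pm it_0|\nabla|) f_k$ is immediate, and the almost-orthogonality estimate transfers verbatim using $\|g_k\|_{L^2}^2 = \|f_k\|_{L^2}^2$.

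For the pointwise decay estimate, I would perform the change of time variable $s := t-t_0 \in [0,N]$. Then $\exp(\pm i(t-t_0)|\nabla|) f_{k,l;t_0}^\pm = \exp(\pm is|\nabla|) g_{k,l}$, and Property \ref{wp:item_decay} of the proposition gives
\[
\bigl|\exp(\pm is|\nabla|) g_{k,l}(x)\bigr| \lesssim_D \bigl(1 + \dist((s,x),T_{k,l}^{\pm})\bigr)^{-D} \|g_k\|_{L^2}
\]
for $(s,x) \in [0,N]\times\rd$. Comparing the definitions of $T_{k,l}^\pm$ and $T_{k,l;t_0}^\pm$, the map $(t,x)\mapsto (t-t_0,x)$ is a bijective isometry carrying $T_{k,l;t_0}^\pm$ onto $T_{k,l}^\pm$, so $\dist((t,x),T_{k,l;t_0}^\pm) = \dist((s,x),T_{k,l}^\pm)$, yielding the required decay bound on $[t_0,t_0+N]\times\rd$ with constant $\|g_k\|_{L^2} = \|f_k\|_{L^2}$.

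The one small subtlety is the frequency support: the proposition produces $\supp \widehat{g_{k,l}} \subseteq k+[-4,4]^d$, whereas the corollary asserts the tighter containment $k+[-2,2]^d$. This is not really an obstacle — it only requires re-examining the proof of the proposition with the input $g_k$, whose Fourier support lies in $k+[-1,1]^d$. The fattened projection $\widetilde P_k$ in that proof may then be redefined as $\widetilde P_k = \sum_{\|k'-k\|_\infty \leq 1} P_{k'}$, since any $P_{k'}$ with $\|k'-k\|_\infty \geq 2$ already annihilates $g_k$ (as $k' + [-5/8,5/8]^d$ is disjoint from $k+[-1,1]^d$). With this modification, each $g_{k,l} = \widetilde P_k(\varphi_l g_k)$ has Fourier support in $k+[-2,2]^d$, as required. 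Overall, no step presents a real difficulty; this corollary is essentially a notational bookkeeping of the translation invariance of the linear wave flow.
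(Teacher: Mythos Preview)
Your proof is correct and follows exactly the paper's approach: the paper's proof is the single line ``We apply Proposition \ref{wp:prop_wave_packet_decomposition} to \( \exp(\pm it_0 |\nabla|) f_k \)'', and you have simply unpacked this in detail. Your observation about the frequency-support discrepancy ($k+[-2,2]^d$ versus the proposition's $k+[-4,4]^d$) is a genuine minor point the paper glosses over, and your fix---shrinking $\widetilde P_k$ to $\sum_{\|k'-k\|_\infty\le 1}P_{k'}$---is correct and sufficient.
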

\begin{proof}
We apply Proposition \ref{wp:prop_wave_packet_decomposition} to \( \exp(\pm it_0 |\nabla|) f_k \). 
\end{proof}

As discussed in the introduction, we now group the wave packets into bushes and a (nearly) non-overlapping collection (see Figure \ref{figure:multi_packet}). This argument is inspired by Bourgain's bush argument from \cite{Bourgain91}, and we also refer the reader to \cite[Proposition 2.2]{Wolff99}. \\

Before we state main proposition, we define the truncated and fattened \( \ell^\infty\)-cone 
\begin{equation}\label{wp:eq_fattened_cone}
\KNtilde := \{ (t,x) \in [t_0,t_0+N] \times \rfour \colon \|x-x_0\|_\infty \leq 16N-|t-t_0| \} ~. 
\end{equation}
The significance of \( \KNtilde \) will be explained in Section \ref{section:flux} and Section \ref{section:a_priori}. For now, we encourage the reader to treat \( \KNtilde \) as space-time cube of scale \( N \). 

\begin{prop}[Wave packet decomposition and bushes]\label{wp:prop_wp_bushes}
Let \( \{ f_k^\pm \}_{k} \subseteq L_x^2(\rd) \) be a family of functions, where \( \| k \|_\infty \in (N/2,N] \), and \( \supp \widehat{f^\pm_k}\subseteq k + [-1,1]^d \). Let \( t_0 \in N\mathbb{N}_0 \), let \( x_0 \in N\zd \), and let the wave packets \( \{ f_{k,l;t_0}^\pm \} \) be as in \mbox{Corollary \ref{wp:cor_wave_packet_decomposition}}. Furthermore, let \( \QN \) be a collection of disjoint space-time cubes with sidelength \( \sim N^{\delta} \) covering \( \KNtilde \). 
We group the wave packets according to their amplitude by setting 
\begin{equation}\label{wp:eq_am}
\Am = \Amall := \{ (k,l) \in \zd \times \zd \colon \| f_{k,l;t_0}^\pm \|_{L_x^2(\rd)} \in [2^m,2^{m+1}], \| l - x_0 \|_\infty \leq 3N\} ~. 
\end{equation}
Then, there exists a family of bushes \( \{ \Bj \}_{j} = \{ \Bjall \}_{j} \), where \( j=1,\hdots, J^{N,\pm}_{m,t_0,x_0} \), and a nearly 
non-overlapping set \( \Dm = \Dmall \), depending only on the set \( \Am \), so that the following holds:
\begin{enumerate}
\item \label{wp:item_partition} The sets form a partition of \( \Am \), i.e.,  \begin{equation}
\Am = \Dm ~ \overset{\bullet}{\bigcup} ~ \left( \mathsmaller{\overset{\bullet}{\bigcup} }_{j=1,\hdots, J} \Bj \right)
\end{equation}
\item \label{wp:item_number_wp} We have the bound on the number of wave packets \begin{equation}\label{wp:eq_number_wp}
\sum_{x_0\in N\zd} \sum_{m\in \mathbb{Z}} 2^{2m} \# \Amall \lesssim \sum_{k} \| f_k\|_{L_x^2(\rd)}^2~. 
\end{equation}
\item \label{wp:item_bush_elements} Each bush \( \Bj\) contains at least \( \mu=\mu(N,m) :=N^{-\frac{1}{2}} \# \Am \) wave packets. 
\item  \label{wp:item_bush_intersection}For each bush \( \Bj \), all corresponding wave packets intersect in the same region of space-time. More precisely, there exists a cube \( Q\in \QN \) s.t. 
\begin{equation}\label{wp:eq_bush_intersection}
T_{k,l;t_0}^\pm ~\mathsmaller{\bigcap}~ 2Q \neq \emptyset \qquad \forall (k,l) \in \Bj ~. 
\end{equation}
\item \label{wp:item_non_overlapping} At \( \mu= N^{-\frac{1}{2}}\# \Am  \) wave packets in \( \Dm \) overlap, i.e.,  we have for all cubes \( Q \in \QN \) that 
\begin{equation}
\# \{ (k,l) \in \Dm \colon T_{k,l;t_0}^\pm~ \mathsmaller{\bigcap}~ 2Q \neq \emptyset \} < P~. 
\end{equation} 
\end{enumerate}
\end{prop}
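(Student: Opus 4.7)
The plan is to implement a greedy selection algorithm in the spirit of Bourgain's bush argument (and, as noted in the acknowledgements, suggested by Tao). Fix the parameters $N, m, t_0, x_0$, and a sign $\pm$, and abbreviate $\mathcal{A} := \Am$ and $\mu := N^{-1/2} \# \mathcal{A}$. We construct the bushes iteratively. Set $\mathcal{A}_1 := \mathcal{A}$. At step $j$, choose a cube $Q_j \in \QN$ maximizing
\begin{equation*}
n_j := \#\{(k,l) \in \mathcal{A}_j : T_{k,l;t_0}^\pm \cap 2Q_j \neq \emptyset\}.
\end{equation*}
If $n_j \geq \mu$, define $\Bj := \{(k,l) \in \mathcal{A}_j : T_{k,l;t_0}^\pm \cap 2Q_j \neq \emptyset\}$, set $\mathcal{A}_{j+1} := \mathcal{A}_j \setminus \Bj$, and iterate. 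Otherwise halt, setting $\Dm := \mathcal{A}_j$ and $J := j-1$. Since $\mathcal{A}$ is finite and each $\Bj$ is nonempty, the procedure terminates in finitely many steps.

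With this construction, item \ref{wp:item_partition} is tautological. Item \ref{wp:item_bush_elements} holds since the algorithm only records bushes when $n_j \geq \mu$, and item \ref{wp:item_bush_intersection} is enforced by the definition of $\Bj$ with $Q = Q_j$. The nearly non-overlapping property \ref{wp:item_non_overlapping} follows from the halting criterion: once the algorithm stops, no cube $Q \in \QN$ can satisfy $\#\{(k,l) \in \Dm : T_{k,l;t_0}^\pm \cap 2Q \neq \emptyset\} \geq \mu$, since otherwise this cube would have been selected at the next step.

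It remains to verify the counting estimate \ref{wp:item_number_wp}. By the amplitude restriction in the definition \eqref{wp:eq_am},
\begin{equation*}
2^{2m} \, \# \Amall \;\lesssim\; \sum_{\substack{(k,l) \in \zd \times \zd \\ \|k\|_\infty \in (N/2,N] \\ \|l - x_0\|_\infty \leq 3N}} \| f_{k,l;t_0}^\pm \|_{L_x^2(\rd)}^2.
\end{equation*}
Summing in $m$ removes the amplitude constraint (the dyadic shells $[2^m, 2^{m+1})$ are disjoint), and then summing in $x_0 \in N\zd$ introduces only a bounded overcounting factor since, for each fixed $l$, the constraint $\|l - x_0\|_\infty \leq 3N$ admits only $O(1)$ values of $x_0$. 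After these reductions, the almost-orthogonality statement \eqref{wp:eq_orthogonality} from Corollary \ref{wp:cor_wave_packet_decomposition}, summed over $k$, yields $\sum_{k,l} \| f_{k,l;t_0}^\pm \|_{L_x^2}^2 \lesssim \sum_k \| f_k \|_{L_x^2}^2$, which is the desired bound.

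The algorithm itself is routine; the conceptual content already lives in the careful choice of threshold $\mu = N^{-1/2} \# \mathcal{A}$, which is calibrated so that the bush contribution (many wave packets concentrated on light cones through $2Q_j$, controlled via flux estimates) and the non-overlap contribution (few tubes per cube, controlled via $L_t^\infty L_x^\infty$ smallness) match in the subsequent energy increment analysis. The main thing to verify is therefore bookkeeping — that the greedy choice respects the cube scale $N^\delta$ of $\QN$ and that the bound in item \ref{wp:item_number_wp} does not degrade when passing from the per-$(t_0,x_0)$ wave packet decomposition to the summation over $x_0 \in N\zd$; both points are handled by the $O(1)$ overcounting observed above.
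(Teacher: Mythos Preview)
Your proposal is correct and follows essentially the same approach as the paper: both run the greedy selection algorithm over cubes in $\QN$, extracting bushes whenever the multiplicity reaches $\mu$ and declaring the remainder to be $\Dm$, and both obtain item~\ref{wp:item_number_wp} from the almost-orthogonality \eqref{wp:eq_orthogonality} together with the $O(1)$ overcounting in $x_0$. Your use of $2Q$ throughout the algorithm (rather than $Q$) is in fact slightly cleaner, since it makes items~\ref{wp:item_bush_intersection} and~\ref{wp:item_non_overlapping} follow verbatim from the construction.
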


The choice of the number of packets/multiplicity \( \mu=N^{-\frac{1}{2}} \# \Am \) will be justified in the proof of Proposition \ref{apriori:prop_single_scale}, see \eqref{apriori:eq_choice_P}. The parameter \( \mu \) corresponds to the multiplicity parameter in Bourgain's bush argument, see \cite[Proposition 2.2]{Wolff99}. 

\begin{rem}\label{wp:rem_deterministic_collections}
We will later apply this proposition to a set of random functions \( \{ \epsilon_k f_k^\pm \}_k \). From \eqref{wp:eq_am}, it follows that the sets \( \Am \), and hence also \( \Bj \) and \( \Dm\), do not depend on the random signs \( \{ \epsilon_k \} \). 
\end{rem}

\begin{proof}
Let us first prove the inequality in \ref{wp:item_number_wp}. From Corollary \ref{wp:cor_wave_packet_decomposition}, it follows that
\begin{equation*}
\sum_{x_0\in N\zd} \sum_{m\in \mathbb{Z}} 2^{2m} \# \Amall \lesssim \sum_{\substack{ k,l\in \zd \\ \| k\|_\infty \in (N/2,N]}} \| f_{k,l;t_0}^\pm \|_{L_x^2}^2 \lesssim \sum_{\substack{k\in \zd\\ \| k\|_\infty \in(N/2,N]}} \| f_k^\pm \|_{L_x^2}^2~. 
\end{equation*}

We now construct the sets \( \Bjall \) and \( \Dmall \). 
To simplify the expressions, we drop the super- and subscripts \( \pm, N, t_0\), and \( x_0\) from our notation. The basic idea is to form the bushes through a greedy selection algorithm. For any \( Q \in \QN \), we define
\begin{equation}\label{wp:eq_tq}
\TQ := \{ (k,l)\in \Am \colon T_{k,l} ~ \mathsmaller{\bigcap} ~ Q \neq \emptyset \}~. 
\end{equation}
We further set \( \mathscr{T}_m^{(1)}(Q) := \TQ \). We then choose a cube \( Q_1 \in \QN \) such that 
\begin{equation*}
\# \mathscr{T}_m^{(1)}(Q_1) = \max_{Q\in \QN} \# \mathscr{T}_m^{(1)}(Q)~,
\end{equation*}
and define the first bush as \( \mathscr{B}_{1,m} :=  \mathscr{T}_m^{(1)}(Q_1) \). By setting 
\begin{equation*}
\mathscr{T}_m^{(2)}(Q) = \mathscr{T}_m^{(1)}(Q)\backslash \mathscr{B}_{1,m}~,
\end{equation*}
we remove all of the wave packets in the first bush from the collection. We then iteratively define \( \Bj := \mathscr{T}^{(j)}_m(Q_j) \), where 
\begin{equation*}
\# \mathscr{T}_m^{(j)}(Q_j) = \max_{Q\in \QN} \# \mathscr{T}_m^{(j)}(Q)~,
\end{equation*}
and the collections \(  \mathscr{T}_m^{(j)}(Q) \) are defined as \(  \mathscr{T}_m^{(j-1)}(Q)\backslash \Bjminus \). Once \(  \mathscr{T}_m^{(j+1)}(Q_{j+1}) < \mu \), we no longer create a new bush, and instead stop the algorithm. Since \( \Am \) contains at most \( \sim N^{8} \) wave packets, the greedy selection algorithm terminates after finitely many steps. From the construction, we see that the sets \( \Bj \subseteq \zd \times \zd \)  are disjoint (even though the corresponding tubes may still overlap). Finally, we define the collection \( \Dm \) by \begin{equation*}
\Dm := \Am \backslash \mathsmaller{\bigcup}_{j=1}^J \Bj~. 
\end{equation*}
The properties \ref{wp:item_partition}, \ref{wp:item_bush_elements}, \ref{wp:item_bush_intersection}, and \ref{wp:item_non_overlapping} then follow directly from the construction. 
\end{proof}

We now prove a probabilistic estimate for the wave packets with random coefficients. 

\begin{prop}[Square-root cancellation for wave packets]\label{wp:prop_square_root_cancellation}
Let \( \{ f_k^\pm \}_{k} \subseteq L_x^2(\rd) \) be a deterministic family of functions, where \( k \in \zd \) satisfies \( \| k \|_\infty \in (N/2,N] \), and assume that \( \sum_{k} \| f_k^\pm \|_{L_x^2(\rd)}^2 \lesssim 1 \). Let \( \theta > 0 \) be a parameter, and let \( C_d > 0 \) be any large absolute constant. Then, we have for all \( m \in \mathbb{Z} \) satisfying \( - C_d \log(N) \leq m \leq C_d \log(N) \) that 
\begin{equation}\label{wp:eq_square_root_bushes}
\mathbb{E} \Bigg[ \sup_{ \substack{ t_0=0,\hdots \lfloor N^\theta \rfloor N \\ x_0\in N\zd}} ~\sup_{j=1,\hdots, J^{N,\pm}_{m,t_n,x_0}} 
\frac{\Big\| \sum_{(k,l)\in \Bjall} \epsilon_k \exp(\pm i (t-t_0)|\nabla|) f_{k,l;t_0}^\pm \Big\|_{L_{t,x}^\infty(\reals \times \rd)}}{2^m (\# \Bjall)^{\frac{1}{2}} }\Bigg] 
\lesssim N^{\delta d} 
\end{equation}
and
\begin{equation}\label{wp:eq_square_root_disjoint}
\mathbb{E} \Bigg[ \sup_{ \substack{t_0=0,\hdots \lfloor N^\theta \rfloor N \\ x_0\in N\zd}} 
\frac{\Big\| \sum_{(k,l)\in \Dmall} \epsilon_k \exp(\pm i (t-t_0)|\nabla|) f_{k,l;t_0}^\pm \Big\|_{L_{t}^\infty L_x^\infty([t_0,t_0+N]  \times \rd)}}{2^m \mu^{\frac{1}{2}} }\Bigg] 
\lesssim N^{\delta d} 
~.
\end{equation}
Here, \( \mu= N^{-\frac{1}{2}} \# \Amall \) is as in Proposition \ref{wp:prop_wp_bushes}. To be perfectly precise, we use the convention \( 0/0:= 0 \) in \eqref{wp:eq_square_root_disjoint}.
\end{prop}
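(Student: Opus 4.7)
My strategy is to combine Khintchine's inequality applied pointwise, a Bernstein--Minkowski argument to pass from pointwise to $L_{t,x}^\infty$ bounds, and Lemma \ref{prelim:lem_suprema_subgaussian} to handle the suprema over $(t_0,x_0,m,j)$. Fix a tuple $(t_0,x_0,m,j)$ and set $F(t,x) := \sum_{(k,l)\in\Bj} \epsilon_k \exp(\pm i(t-t_0)|\nabla|) f_{k,l;t_0}^\pm(x)$. Since the random signs are indexed only by $k$, I rewrite $F = \sum_k \epsilon_k g_k$ with $g_k(t,x) := \sum_{l:(k,l)\in\Bj} \exp(\pm i(t-t_0)|\nabla|) f_{k,l;t_0}^\pm(x)$. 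For fixed $k$, the tubes $T_{k,l;t_0}^\pm$ (varying $l$) are parallel and disjoint, so by Corollary \ref{wp:cor_wave_packet_decomposition}.\ref{wp:item_decay} only $O(1)$ of them are non-negligible at any fixed $(t,x)$, each of magnitude $\lesssim 2^m$ modulo polynomial tube-decay.

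Khintchine's inequality (Lemma \ref{prelim:lem_khintchine}) then yields pointwise
\begin{equation*}
\|F(t,x)\|_{L_\omega^p} \lesssim \sqrt{p}\,\Bigl(\sum_k |g_k(t,x)|^2\Bigr)^{1/2}.
\end{equation*}
For the bush case, the number of distinct $k$'s in $\Bj$ is at most $\#\Bj$ and each $|g_k(t,x)|\lesssim 2^m$ (with rapid tube-decay), giving $(\sum_k |g_k|^2)^{1/2} \lesssim 2^m(\#\Bj)^{1/2}$ plus decaying tails. For the non-overlapping case $\Dm$, property \ref{wp:item_non_overlapping} of Proposition \ref{wp:prop_wp_bushes} says that at most $\mu$ wave packets from $\Dm$ meet any cube $Q\in\QN$, which analogously yields the bound $2^m\mu^{1/2}$.

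To upgrade from a pointwise to an $L_{t,x}^\infty$ sub-gaussian bound, I apply Minkowski's integral inequality (Lemma \ref{prelim:lem_minkowski}) to pull the $L_\omega^p$ norm inside an auxiliary $L_{t,x}^q$ norm on the relevant box $[t_0,t_0+N]\times\{|x-x_0|\lesssim N\}$. The frequency support of $F$ is contained in $\{\|\xi\|_\infty\lesssim N\}$, so Bernstein's inequality and Lemma \ref{prelim:lem_inf_removal} convert $L_{t,x}^q$ to $L_{t,x}^\infty$ at cost $\lesssim N^{(d+1)/q}$; the polynomial tube-tails are absorbed by integrating against negative powers of $\langle\dist((t,x),T_{k,l;t_0}^\pm)\rangle$. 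Choosing $q\sim\log N$ turns all such losses into universal constants, and I obtain
\begin{equation*}
\bigl\|\|F\|_{L_{t,x}^\infty}\bigr\|_{L_\omega^p} \lesssim \sqrt{p}\,N^{O(\delta)}\, 2^m (\#\Bj)^{1/2},
\end{equation*}
i.e.\ $\|F\|_{L_{t,x}^\infty}/(2^m(\#\Bj)^{1/2})$ is sub-gaussian with $\Psi_2$-norm $\lesssim N^{O(\delta)}$. Finally, Lemma \ref{prelim:lem_suprema_subgaussian} takes the supremum over $(t_0,x_0,m,j)$: by \ref{wp:item_number_wp} of Proposition \ref{wp:prop_wp_bushes} combined with the normalization $\sum_k\|f_k^\pm\|_{L_x^2}^2\lesssim 1$, only polynomially many tuples produce nonempty $\Bj$, so $\sqrt{\log(2+J)}\lesssim\sqrt{\log N}$, which is absorbed into $N^{\delta d}$. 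The proof of \eqref{wp:eq_square_root_disjoint} is identical with $\Bj$ replaced by $\Dm$ and $\#\Bj$ replaced by $\mu$.

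\textbf{Main obstacle.} The most delicate point is ensuring the polynomial tube-decay tails in Corollary \ref{wp:cor_wave_packet_decomposition}.\ref{wp:item_decay} do not spoil the square-root cancellation at points far from the bush-cube $Q$ (respectively at points where few non-overlapping tubes pass). The Bernstein--Minkowski--$L^q$ device with $q\sim\log N$ is the linchpin that allows absorbing these tails while keeping the $\sqrt{\#\Bj}$ (respectively $\sqrt{\mu}$) gain intact, at the cost only of the harmless $N^{\delta d}$ factor.
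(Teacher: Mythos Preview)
Your strategy is essentially the same as the paper's: reduce to polynomially many sub-gaussian random variables via Lemma~\ref{prelim:lem_suprema_subgaussian}, and for each fixed $(t_0,x_0,m,j)$ obtain the sub-gaussian bound by combining Khintchine, Minkowski, and a Bernstein--type conversion from $L_{t,x}^q$ to $L_{t,x}^\infty$ (the paper phrases the last step as ``choosing $p$ sufficiently large'' rather than $q\sim\log N$, but this is the same device). For the non-overlapping estimate~\eqref{wp:eq_square_root_disjoint} your argument and the paper's coincide: both restrict to $\KNtilde$, use that at most $\mu$ tubes meet any $Q\in\QN$, bound each wave packet pointwise by $2^m$ via Bernstein, and absorb the far tubes through the decay estimate in Corollary~\ref{wp:cor_wave_packet_decomposition}.

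There is one genuine difference in the bush estimate~\eqref{wp:eq_square_root_bushes}. The norm there is $L_{t,x}^\infty(\reals\times\rd)$, not just on the box $[t_0,t_0+N]\times\{|x-x_0|\lesssim N\}$. Your pointwise bound $(\sum_k|g_k|^2)^{1/2}\lesssim N^{\delta d}2^m(\#\Bj)^{1/2}$ holds globally by Bernstein, but it is a constant function of $(t,x)$, so applying Minkowski on the box only yields the $L_{t,x}^\infty$ bound on that box; one cannot recover the global $L_{t,x}^\infty$ norm from a box $L_{t,x}^q$ norm. The paper closes this by invoking the refined Strichartz estimate (Corollary~\ref{prelim:cor_refined_strichartz}), which gives a \emph{finite global} space-time norm $\|\exp(\pm it|\nabla|)f_{k,l}\|_{L_{t,x}^p(\reals\times\rd)}\lesssim N^{1/p}\|f_{k,l}\|_{L_x^2}$; after Minkowski and the $\ell_{k,l}^2$ step this yields the global $L_{t,x}^p$ bound, and then Bernstein plus Lemma~\ref{prelim:lem_inf_removal} upgrade to global $L_{t,x}^\infty$. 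Your box version is in fact all that is used downstream (Step~3 of Proposition~\ref{apriori:prop_single_scale} only evaluates the bush sum on $\widetilde{T}(\Bj)\subseteq[t_0,t_0+N]\times\rd$), so the gap is harmless for the application, but to prove the proposition exactly as stated you should insert the refined Strichartz step.
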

The expressions in \eqref{wp:eq_square_root_bushes} and \eqref{wp:eq_square_root_disjoint} may seem complicated. To make sense of them, recall that the square function heuristic predicts that \( \sum_k \epsilon_k a_k \) is roughly of size \( \sim (\sum_k a_k^2 )^{\frac{1}{2}} \). Then, Proposition \ref{wp:prop_square_root_cancellation} simply states that the square function heuristic can be justified for all relevant amplitudes, for all relevant times, all positions, all families of bushes, and all non-overlapping collections.  \\
For instance, let us heuristically motivate \eqref{wp:eq_square_root_disjoint}. By the definition of \( \Dmall \), any fixed point in the space-time region \( [t_0,t_0+N] \times \rd \) is contained in the (moral) support of at most \( \mu \) wave packets. Since each of the wave packets has amplitude \( \sim 2^m \), and they all correspond to different frequencies \( k \in \zd \), the square-function heuristic predicts a contribution of size \( \sim 2^m \mu^{\frac{1}{2}} \). 
\begin{proof}
In this proof, we make extensive use of Lemma \ref{prelim:lem_suprema_subgaussian}. First, we prove that the suprema in \eqref{wp:eq_square_root_bushes} and \eqref{wp:eq_square_root_disjoint} are over at most \( N^{\mathcal{O}(C_d)} \)-many terms. From \eqref{wp:eq_number_wp}, it follows for all \( m\geq -C_d \log(N) \) that 
\begin{align*}
\sum_{x_0\in N\zd} \# \Amall \lesssim 2^{-2m} \sum_{k,l} \| f_{k,l}^\pm \|_{\lxtwo}^2 
\lesssim 2^{-2m} \sum_{k} \| f_k^\pm \|_{\lxtwo}^2 
\lesssim N^{2C_d}~. 
\end{align*}
Thus, this bounds the number of all wave packets with amplitude \( \sim 2^m \). 
Since each bush \( \Bjall \) contains at least one wave packet, the supremum in \eqref{wp:eq_square_root_bushes} is over at most \( \mathcal{O}(N^{2C_d}) \)  non-zero terms. The same applies to the non-overlapping families \( \Dmall \) in \eqref{wp:eq_square_root_disjoint}. From Lemma \ref{prelim:lem_suprema_subgaussian}, it then suffices to obtain uniform sub-gaussian bounds on each individual term in  \eqref{wp:eq_square_root_bushes} and \eqref{wp:eq_square_root_disjoint}. \\
We start with the contribution of the bushes. To simplify the notation, we write \( \Bj = \Bjall \). From Bernstein's inequality and Lemma \ref{prelim:lem_inf_removal}, we have for all \( 2 \leq p < \infty \) that 
\begin{align*}
&\Big \| \sum_{(k,l)\in \Bj} \epsilon_k \exp(\pm i(t-t_0)|\nabla|) f_{k,l;t_0}^+ \Big\|_{L_t^\infty L_x^\infty(\reals\times \rd)} \\
&\leq N^{\frac{d+1}{p}} \Big \| \sum_{(k,l)\in \Bj} \epsilon_k \exp(\pm i(t-t_0)|\nabla|) f_{k,l;t_0}^+ \Big\|_{L_t^p L_x^p(\reals\times \rd)} ~.
\end{align*}
Before we utilize the randomness, we observe that for each \( k \in \zd \) at most \( \mathcal{O}(N^{\delta d}) \) tubes \( T_{k,l;t_0}^{\pm} \) can intersect a space-time cube of sidelength \(\sim N^{\delta} \). As a result, it follows from \eqref{wp:eq_bush_intersection} that 
\begin{equation*}
\# \{ l\in \zd \colon (k,l)\in \Bj \} \lesssim N^{\delta d} ~. 
\end{equation*} 
For all \( p\leq r < \infty \), we then obtain from Minkowski's integral inequality, Khintchine's inequality, and the refined Strichartz estimate (Corollary \ref{prelim:cor_refined_strichartz}) that
\begin{align*}
&\Big \| \sum_{(k,l)\in \Bj} \epsilon_k \exp(\pm i(t-t_0)|\nabla|) f_{k,l;t_0}^+ \Big\|_{L_\omega^rL_t^\infty L_x^\infty(\Omega\times\reals\times \rd)} \\
&\lesssim N^{\frac{d+1}{p}} \Big \| \sum_{(k,l)\in \Bj} \epsilon_k \exp(\pm i(t-t_0)|\nabla|) f_{k,l;t_0}^+ \Big\|_{L_t^p L_x^pL_\omega^r(\reals\times \rd\times \Omega)} \\
&\lesssim \sqrt{r} N^{\frac{d+1}{p}}  \Big \|  \bigg( \sum_{k\in \zd} \Big(\sum_{l\colon (k,l)\in \Bj}  \exp(\pm i(t-t_0)|\nabla|) f_{k,l;t_0}^+ \Big)^2 \bigg)^{\frac{1}{2}}\Big\|_{L_t^p L_x^p(\reals\times \rd)} \\
&\lesssim  \sqrt{r} N^{\frac{d+1}{p}+ \frac{\delta d}{2}}\Big \| \exp(\pm i(t-t_0)|\nabla|) f_{k,l;t_0}^+  \Big\|_{L_t^p L_x^p \ell_{k,l}^2(\reals\times \rd \times \Bj)} \\
&\lesssim  \sqrt{r} N^{\frac{d+1}{p}+ \frac{\delta d}{2}}\Big \| \exp(\pm i(t-t_0)|\nabla|) f_{k,l;t_0}^+  \Big\|_{ \ell_{k,l}^2 L_t^p L_x^p (\Bj\times \reals\times \rd)} \\
&\lesssim \sqrt{r} N^{\frac{d+2}{p}+ \frac{\delta d}{2}} \Big \| f_{k,l;t_0}^+ \Big\|_{\ell_{k,l}^2 L_x^2(\Bj\times \rd)} \\
&\lesssim \sqrt{r} N^{\frac{d+2}{p} + \frac{\delta d}{2}} 2^m (\# \Bj)^{\frac{1}{2}}~. 
\end{align*}
By taking \( p\geq 2 \) to be sufficiently large, we then obtain the desired sub-gaussian bound. This completes the proof of \eqref{wp:eq_square_root_bushes}. \\
We now control the contribution of a single non-overlapping family \( \Dm = \Dmall \). For the technical aspects of this part, recall that the collection \( \QN \) from Proposition \ref{wp:prop_wp_bushes} covers \( \KNtilde \), but due the definition of \( \Am \) in \eqref{wp:eq_am}, all the tubes \( T_{k,l;t_0}^\pm \) with indices in \( \Dm \) are contained in the region \( \| x- x_0 \|_\infty \leq 6N \). This gives us sufficient room for the following argument. \\
We let \(  2 \leq p < \infty \). As before, it follows from Bernstein's inequality and Lemma \ref{prelim:lem_inf_removal} that 
\begin{align*}
&\Big\| \sum_{(k,l)\in \Dmall} \epsilon_k \exp(\pm i (t-t_0)|\nabla|) f_{k,l;t_0}^\pm \Big\|_{L_t^\infty L_x^\infty([t_0,t_0+N] \times \rd)} \\
&\lesssim N^{\frac{d+1}{p}} \Big\| \sum_{(k,l)\in \Dmall} \epsilon_k \exp(\pm i (t-t_0)|\nabla|) f_{k,l;t_0}^\pm \Big\|_{L_t^p L_x^p([t_0,t_0+N]  \times \rd)}
\end{align*}
For all \( p \leq r <\infty\), we then obtain from Minkowski's integral inequality and Khintchine's inequality that
\begin{align*}
&\Big\| \sum_{(k,l)\in \Dm} \epsilon_k \exp(\pm i (t-t_0)|\nabla|) f_{k,l;t_0}^\pm \Big\|_{L_\omega^r L_t^\infty L_x^\infty(\Omega \times [t_0,t_0+N]  \times \rd)} \displaybreak[1] \\
&\lesssim N^{\frac{d+1}{p}} \Big\| \sum_{(k,l)\in \Dm} \epsilon_k \exp(\pm i (t-t_0)|\nabla|) f_{k,l;t_0}^\pm \Big\|_{L_t^p L_x^p L_\omega^r([t_0,t_0+N]  \times \rd \times \Omega)} \displaybreak[1] \\
&\lesssim \sqrt{r} N^{\frac{d+1}{p}} \Big\|  \bigg( \sum_{k\in \zd} \big( \sum_{l\colon (k,l)\in \Dm} \exp(\pm i (t-t_0)|\nabla|) f_{k,l;t_0}^\pm  \big)^2 \bigg)^{\frac{1}{2}} \Big\|_{L_t^p L_x^p([t_0,t_0+N]  \times \rd)} \displaybreak[1] \\
&\lesssim \sqrt{r} N^{\frac{d+1}{p}} \Big\|  \bigg( \sum_{k\in \zd} \big( \sum_{l\colon (k,l)\in \Dm} \exp(\pm i (t-t_0)|\nabla|) f_{k,l;t_0}^\pm  \big)^2 \bigg)^{\frac{1}{2}} \Big\|_{L_t^p L_x^p(\KNtilde)}  \\
&\quad + \sqrt{r} N^{\frac{d+1}{p}} \Big\|  \bigg( \sum_{k\in \zd} \big( \sum_{l\colon (k,l)\in \Dm} \exp(\pm i (t-t_0)|\nabla|) f_{k,l;t_0}^\pm  \big)^2 \bigg)^{\frac{1}{2}} \Big\|_{L_t^p L_x^p(([t_0,t_0+N]  \times \rd)\backslash \KNtilde)}
\end{align*}
Since \( \mu = N^{-\frac{1}{2}} \# \Am \geq N^{-\frac{1}{2}} \), the bound on \( ([t_0,t_0+N]  \times \rd)\backslash \KNtilde \) easily follows from the decay estimate \eqref{wp:eq_decay}.  Thus, we now control the contribution on \( \KNtilde \). From Hölder's inequality, we have that 
\begin{align*}
 &\Big\|  \bigg( \sum_{k\in \zd} \big( \sum_{l\colon (k,l)\in \Dm}  \exp(\pm i (t-t_0)|\nabla|) f_{k,l;t_0}^\pm  \big)^2 \bigg)^{\frac{1}{2}} \Big\|_{L_t^p L_x^p(\KNtilde)} \\
 &\lesssim N^{\frac{d+1}{p}}  \Big\|  \bigg( \sum_{k\in \zd} \big( \sum_{l\colon (k,l)\in \Dm} \exp(\pm i (t-t_0)|\nabla|) f_{k,l;t_0}^\pm  \big)^2 \bigg)^{\frac{1}{2}} \Big\|_{L_t^\infty L_x^\infty(\KNtilde)}
\end{align*}
Now pick any cube \( Q \in \QN \). In analogy to \eqref{wp:eq_tq}, we define the collection of ``remaining'' tubes by 
\begin{equation*}
\mathscr{T}_{m}^{r}(Q):= \{ (k,l)\in \Dm\colon T_{k,l;t_0}^{\pm} ~ \mathsmaller{\bigcap} ~ 2Q \neq \emptyset\}~. 
\end{equation*}
From Proposition \ref{wp:prop_wp_bushes}, it follows that \( \# \mathscr{T}_{m}^{r}(Q)  \leq \mu \). As above, we have for each frequency \( k \in \zd \) the bound \( \# \{ l \in\zd \colon (k,l) \in \mathscr{T}_{m}^{r}(Q) \} \lesssim N^{\delta d} \). Using the decay estimate \eqref{wp:eq_tq} to treat distant wave packets, we obtain
\begin{align*}\displaybreak[1]
&\Big\|  \bigg( \sum_{k\in \zd} \big( \sum_{l\colon (k,l)\in \Dm} \exp(\pm i (t-t_0)|\nabla|) f_{k,l;t_0}^\pm  \big)^2 \bigg)^{\frac{1}{2}} \Big\|_{L_t^\infty L_x^\infty(Q)} \\
&\lesssim \Big\|  \bigg( \sum_{k\in \zd} \big( \sum_{l\colon (k,l)\in  \mathscr{T}_{m}^{r}(Q) }  \exp(\pm i (t-t_0)|\nabla|) f_{k,l;t_0}^\pm  \big)^2 \bigg)^{\frac{1}{2}} \Big\|_{L_t^\infty L_x^\infty(Q)}\displaybreak[1] \\
&\quad +\Big\|  \bigg( \sum_{k\in \zd} \big( \sum_{l\colon (k,l)\in \Dm \backslash \mathscr{T}_{m}^{r}(Q) }  \exp(\pm i (t-t_0)|\nabla|) f_{k,l;t_0}^\pm  \big)^2 \bigg)^{\frac{1}{2}} \Big\|_{L_t^\infty L_x^\infty(Q)}\displaybreak[1] \\
&\lesssim N^{\frac{\delta d}{2}} \Big\|  \exp(\pm i (t-t_0)|\nabla|) f_{k,l;t_0}^\pm  \Big\|_{L_t^\infty L_x^\infty \ell_{k,l}^2(Q\times \mathscr{T}_{m}^{r}(Q))} + N^{-10C_d} \displaybreak[1]\\
&\lesssim N^{\frac{\delta d}{2}} 2^m (\# \mathscr{T}_{m}^{r}(Q))^{\frac{1}{2}} + N^{-10C_d} \displaybreak[1]\\
&\lesssim N^{\frac{\delta d}{2}} 2^m \mu^{\frac{1}{2}}~. 
\end{align*}
After taking the supremum over all cubes \( Q \in \QN \), we finally arrive at
\begin{align*}
&\Big\| \sum_{(k,l)\in \Dm} \epsilon_k \exp(\pm i (t-t_0)|\nabla|) f_{k,l;t_0}^\pm \Big\|_{L_\omega^r L_t^\infty L_x^\infty(\Omega \times [t_0,t_0+N]  \times \rd)} 
\lesssim  \sqrt{r} N^{2 \frac{d+2}{p}+ \frac{\delta d}{2}}  2^m \mu^{\frac{1}{2}}~. 
\end{align*} 
By choosing \( p \geq 2 \) sufficiently large, we arrive at the desired sub-gaussian bound.
\end{proof}

\begin{definition}[Wave packet ``norm''] \label{wp:def_wp_norm}
Let \( (f_0,f_1)\in H^s(\rd) \times H^{s-1}(\rd) \) and let \( f_k^\pm \) be defined as in \eqref{wp:eq_microlocal_to_wiener} and \eqref{wp:eq_half_wave_decomposition}.
For any \( N_0 \geq 1 \), we then define the wave packet ``norm'' of the random data \( ( f_0^\omega,f_1^\omega) \) by
\begin{align*}
&\| (f_0^\omega,f_1^\omega) \|_{\WP(N_0)} := \\
&\sum_{ \substack{N\geq N_0\\ |m|\leq C_d \log(N)}} N^{-2\delta d}\sup_{ \substack{  t_0=0,\hdots \lfloor N^\theta \rfloor N \\ x_0\in N\zd}} \sup_{j=1,\hdots, J^{N,\pm}_{m,t_0,x_0}} 
\frac{\Big\| \sum_{(k,l)\in \Bjall} \epsilon_k \exp(\pm i (t-t_0)|\nabla|) f_{k,l;t_0}^\pm \Big\|_{L_t^\infty L_x^\infty(\reals \times \rd)}}{2^m (\# \Bjall)^{\frac{1}{2}} } \\
&~+ \sum_{ \substack{N\geq N_0\\ |m|\leq C_d \log(N)}} N^{-2\delta d} \sup_{ \substack{  t_0=0,\hdots \lfloor N^\theta \rfloor N \\ x_0\in N\zd}} 
\frac{\Big\| \sum_{(k,l)\in \Dmall} \epsilon_k \exp(\pm i (t-t_0)|\nabla|) f_{k,l;t_0}^\pm \Big\|_{L_{t}^\infty L_x^\infty([t_0,t_0+N]  \times \rd)}}{2^m \mu^{\frac{1}{2}} }
\end{align*}
While the quantity \( \| (f_0^\omega,f_1^\omega) \|_{\WP(N_0)}  \) measures the size of the wave packets (over their expected size), it is certainly far from being an actual norm. 
\end{definition}

\begin{cor}\label{wp:cor_wp_norm}
Let \( (f_0,f_1)\in H^s(\rd) \times H^{s-1}(\rd) \) and let \( f_k^\pm \) be defined as in \eqref{wp:eq_microlocal_to_wiener} and \eqref{wp:eq_half_wave_decomposition}. Furthermore, we assume that \( N_0 = N_0( \{ Y_{k,l} \}) \) satisfies
\begin{equation*}
\sum_{\|k \|_\infty \geq N_0/2} \left( \| f_{0;k} \|_{L_x^2(\rd)}^2 + \| f_{1;k} \|_{\dot{H}_x^{-1}(\rd)}^2  \right)\lesssim 1 ~. 
\end{equation*}
Then, it holds that 
\begin{equation*}
\mathbb{E}_\epsilon\| (f_0^\omega,f_1^\omega) \|_{\WP(N_0)} < \infty~,
\end{equation*}
where \( \mathbb{E}_\epsilon \) denotes the expectation over the random signs \( \{ \epsilon_k \} \). 
\end{cor}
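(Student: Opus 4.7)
The plan is to deduce the corollary almost immediately from Proposition \ref{wp:prop_square_root_cancellation}, which already supplies the desired $N^{\delta d}$ bound on each inner supremum. The outer decay factor $N^{-2\delta d}$ built into Definition \ref{wp:def_wp_norm} will then make the full series absolutely convergent.

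First, I would reduce the hypothesis on $(f_{0;k},f_{1;k})$ to the normalization required by Proposition \ref{wp:prop_square_root_cancellation}. From the half-wave decomposition (\ref{wp:eq_half_wave_decomposition}), $f_k^\pm = \tfrac{1}{2}(f_{0;k} \pm i|\nabla|^{-1} f_{1;k})$, so $\|f_k^\pm\|_{L_x^2}^2 \lesssim \|f_{0;k}\|_{L_x^2}^2 + \|f_{1;k}\|_{\dot H_x^{-1}}^2$. A dyadic decomposition in $\|k\|_\infty$ then yields
\[
\sum_{\|k\|_\infty \in (N/2,N]} \|f_k^\pm\|_{L_x^2}^2 \ \lesssim \ 1
\]
for every dyadic $N \geq N_0$, which is exactly the normalization assumed in Proposition \ref{wp:prop_square_root_cancellation}. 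Because the families $\{Y_{k,l}\}$ and $\{\epsilon_k\}$ are independent (as established in Section \ref{section:wp}, just before (\ref{wp:eq_microlocal_to_wiener})), I would condition on $\{Y_{k,l}\}$ and treat $\{f_k^\pm\}$ as deterministic. In particular, the random cutoff $N_0 = N_0(\{Y_{k,l}\})$ becomes a fixed integer for the purposes of the $\epsilon$-average, and the sets $\Bjall$, $\Dmall$ are deterministic (cf. Remark \ref{wp:rem_deterministic_collections}).

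Second, I would apply Proposition \ref{wp:prop_square_root_cancellation} shell-by-shell: for each dyadic $N \geq N_0$, each sign $\pm$, and each $m \in \mathbb{Z}$ with $|m| \leq C_d \log N$, the proposition controls both the bush supremum (\ref{wp:eq_square_root_bushes}) and the non-overlapping supremum (\ref{wp:eq_square_root_disjoint}) appearing in Definition \ref{wp:def_wp_norm} by $\lesssim N^{\delta d}$. Combining these bounds via linearity of expectation and the triangle inequality gives
\[
\mathbb{E}_\epsilon \|(f_0^\omega,f_1^\omega)\|_{\WP(N_0)} \ \lesssim \ \sum_{N\geq N_0} N^{-2\delta d} \sum_{|m|\leq C_d \log N} N^{\delta d} \ \lesssim \ \sum_{N\geq N_0} N^{-\delta d}\log N \ < \ \infty,
\]
where convergence uses only $\delta d > 0$, with the logarithm absorbed into an arbitrarily small power of $N$.

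I do not expect a substantive obstacle: the statement is essentially bookkeeping on top of Proposition \ref{wp:prop_square_root_cancellation}. The only delicate point is the interplay between the two independent layers of randomness—the signs $\{\epsilon_k\}$ being averaged out and the coefficients $\{Y_{k,l}\}$ defining the deterministic family to which the proposition is applied—but the independence from Section \ref{section:wp} makes this conditioning step routine, and no scaling or renormalization step is needed since the hypothesis already furnishes a unit-size $L_x^2$ bound in each dyadic frequency shell.
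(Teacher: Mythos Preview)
Your proposal is correct and follows exactly the same approach as the paper, which simply states that the corollary follows directly from Proposition \ref{wp:prop_square_root_cancellation} and Definition \ref{wp:def_wp_norm}. Your write-up supplies the routine bookkeeping (the half-wave bound on $\|f_k^\pm\|_{L_x^2}$, the conditioning on $\{Y_{k,l}\}$, and the summation in $N$ and $m$) that the paper leaves implicit.
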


\begin{proof}
This follows directly from Proposition \ref{wp:prop_square_root_cancellation} and Definition \ref{wp:def_wp_norm}. 
\end{proof}

For the bootstrap argument in Section \ref{section:a_priori}, it will be convenient to create a small forcing term by truncating to high frequencies. If \( N_{\hi} = N_{\hi}(\omega) \) is a (possibly random) frequency parameter, we set 
\begin{equation}\label{wp:eq_high_frequency}
F_{\hi} := \sum_{N\geq N_{\hi}} \left( \cos(t|\nabla|) f_{0,N}^\omega + \frac{\sin(t|\nabla|)}{|\nabla|} f_{1,N}^\omega\right)~. 
\end{equation}

\begin{prop}[Truncation to high frequencies]\label{wp:prop_smallness}
Let \( 0 <\eta \leq 1\) be an absolute constant and let \( s > \frac{1}{3} \). Let \( (f_0,f_1) \in H_x^s(\rfour)\times H_x^{s-1}(\rfour) \), let \( (f_0^\omega,f_1^\omega) \) be their microlocal randomizations, and let \( \{ f_k^\pm \} \) be as in \eqref{wp:eq_microlocal_to_wiener} and \eqref{wp:eq_half_wave_decomposition}. Then, there exists a random frequency parameter \( N_{\hi} \geq \eta^{-1} \) such that 
\begin{align*}
\| (P_{\geq N_{\hi}/4} f_0^\omega ,  P_{\geq N_{\hi}/4} f_1^\omega) \|_{H^s \times H^{s-1}} &\leq \eta~, \\
\| (f_0^\omega,f_1^\omega) \|_{Z(N_{\hi})} &\leq \eta~, \\
\|(f_0^\omega,f_1^\omega) \|_{\WP(N_{\hi})} &\leq \eta~, \\
\| F_{\hi} \|_{L_t^3L_x^6(\reals\times \rfour)}&\leq \eta~.
\end{align*}
\end{prop}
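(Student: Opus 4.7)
The plan is to show that each of the four quantities on the left-hand side, viewed as a function of the truncation parameter $N_0$, is almost surely finite at $N_0 = \eta^{-1}$ and tends to zero as $N_0 \to \infty$; a single sufficiently large random choice of $N_{\hi}$ will then force all four to be below $\eta$ simultaneously. Each of these quantities is a dyadic tail sum of non-negative terms, so once the expectation at $N_0 = 1$ is shown to be finite (in some $L_\omega^r$), almost-sure finiteness plus monotone convergence of the tail yields the desired smallness, and one finally takes $N_{\hi}(\omega)$ to be the maximum of $\eta^{-1}$ and of the four individual thresholds produced.

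For the $H^s \times H^{s-1}$, $Z$, and $L_t^3 L_x^6$ quantities, the necessary $L_\omega^r$-bounds are already in hand: Lemma \ref{prelim:lem_random_hs} for the first, the inequality $\|(f_0^\omega, f_1^\omega)\|_{L_\omega^r Z(1)} \lesssim \sqrt{r}\|(f_0, f_1)\|_{H^s \times H^{s-1}}$ stated after Definition \ref{prop:def_auxiliary} for the second, and Lemma \ref{prob:lem_probabilistic_strichartz} applied with the wave-admissible pair $(q, p) = (3, 6)$ in dimension four for the fourth. This last application produces the factor $N^{\frac{1}{3} - s +}$, which is summable over dyadic $N$ precisely because $s > \tfrac{1}{3}$ (and so certainly in the range $s > \tfrac{11}{12}$ of the main theorem); combining with the triangle inequality in $N$ yields $\mathbb{E}\|F_{\hi}\|_{L_t^3 L_x^6} \lesssim N_{\hi}^{\frac{1}{3}-s+}\|(f_0, f_1)\|_{H^s \times H^{s-1}}$, which vanishes as $N_{\hi} \to \infty$.

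The wave packet quantity is the subtlest because Corollary \ref{wp:cor_wp_norm} is conditional on the hypothesis $\sum_{\|k\|_\infty \geq N_0/2}(\|f_{0;k}\|_{L^2}^2 + \|f_{1;k}\|_{\dot{H}^{-1}}^2) \lesssim 1$, which must be verified before the sign randomness is used. I would first apply Khintchine's inequality together with Lemma \ref{prelim:lem_l2} to obtain
\begin{equation*}
\mathbb{E}_Y \sum_{\|k\|_\infty \in (N/2, N]} \|f_{0;k}\|_{L_x^2}^2 \lesssim \|\widetilde{P}_N f_0\|_{L_x^2}^2 + N^{-10d}\|f_0\|_{H_x^s}^2,
\end{equation*}
and then sum over dyadic $N \geq N_0$ to arrive at a bound of order $N_0^{-2s}\|f_0\|_{H^s}^2$ (and similarly for $f_1$). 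Hence for almost every realization of the $Y$-variables there is a threshold $N_0(\omega)$ beyond which the hypothesis of Corollary \ref{wp:cor_wp_norm} holds; that corollary then supplies $\mathbb{E}_\epsilon \|(f_0^\omega, f_1^\omega)\|_{\WP(N_0(\omega))} < \infty$, and almost-sure finiteness of this dyadic $\WP$-sum produces a further threshold beyond which $\|(f_0^\omega, f_1^\omega)\|_{\WP(N_{\hi})} \leq \eta$. The main obstacle is exactly this ordering of the randomization: one peels off the $Y$-variables first in order to satisfy the hypothesis of Corollary \ref{wp:cor_wp_norm}, and only then uses the signs $\epsilon_k$, whereas the other three estimates are unconditional in the randomization.
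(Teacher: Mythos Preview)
Your proposal is correct and follows essentially the same approach as the paper's proof: you obtain separate random thresholds for each of the four quantities via almost-sure finiteness of the corresponding dyadic sums (from Lemma~\ref{prelim:lem_random_hs}, the $Z$-bound after Definition~\ref{prop:def_auxiliary}, Lemma~\ref{prob:lem_probabilistic_strichartz} with $(q,p)=(3,6)$, and Corollary~\ref{wp:cor_wp_norm}) and then take the maximum. In particular, you correctly identify the key subtlety that the $\WP$-estimate requires a two-stage argument---first fixing a threshold depending only on the $\{Y_{k,l}\}$-variables so that the $\ell^2$-hypothesis of Corollary~\ref{wp:cor_wp_norm} is met, and only then exploiting the sign randomness---which is exactly what the paper does.
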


\begin{proof}
We only need to combine the previous estimates. 
From Lemma \ref{prelim:lem_random_hs}, it follows that 
\begin{equation*}
\sum_{N\geq 2} \sum_{\|k\|_\infty \in (N/2,N]} \left( N^{2s} \| f_{0;k} \|_{L_x^2}^2 + N^{2(s-1)} \| f_{1;k} \|_{L_x^2}^2 \right) < \infty \qquad \text{a.s.}
\end{equation*}
From dominated convergence, it then follows that there exists some random frequency \( N_{\hi,1} \), depending only on the random variables \( \{ Y_{k,l} \} \),  satisfying
\begin{equation*}
\sum_{N\geq N_{\hi,1}} \sum_{\|k\|_\infty \in (N/2,N]} \left( N^{2s} \| f_{0;k} \|_{L_x^2}^2 + N^{2(s-1)} \| f_{1;k} \|_{L_x^2}^2 \right) \leq \eta \qquad \text{a.s.}
\end{equation*}
From Corollary \ref{wp:cor_wp_norm}, it then follows that 
\begin{equation*}
\mathbb{E}_\epsilon\| (f_0^\omega,f_1^\omega) \|_{\WP(N_{\hi,1})} < \infty~.
\end{equation*}
From dominated convergence, it follows that there exists a random frequency \( N_{\hi,2} \), depending on the random variables \( \{ \epsilon_k \} \) and \( \{ Y_{k,l} \} \), which satisfies
\begin{equation*}
\| (f_0^\omega,f_1^\omega) \|_{\WP(N_{\hi,2})}\leq \eta \qquad \text{a.s.}
\end{equation*}
By similar arguments, it also follows from  Proposition \ref{prob:lem_probabilistic_strichartz} and Corollary \ref{prob:cor_long_time_decay} that there exists random frequencies \( N_{\hi,3} \) and \( N_{\hi,4} \) such that 
\begin{equation*}
\| (f_0^\omega,f_1^\omega) \|_{Z(N_{\hi,3})} \leq \eta \quad \text{and} \quad \sum_{N\geq N_{\hi,4}}\Big \|\cos(t|\nabla|) f_{0,N}^\omega + \frac{\sin(t|\nabla|)}{|\nabla|} f_{1,N}^\omega \Big \|_{L_t^3L_x^6(\reals\times \rfour)}\leq \eta
\end{equation*}
For the second inequality, we have used the condition \( s > \frac{1}{3} \).\\
By choosing \( N_{\hi} := \max( 4 N_{\hi,1},N_{\hi,2} ,N_{\hi,3} ,N_{\hi,4},\eta^{-1})  \), we arrive at the desired conclusion. 
\end{proof}

\section{Nonlinear evolution: Local well-posedness, stability theory, and flux estimates}
In this section, we first apply to Da Prato-Debussche trick \cite{PD02} to the nonlinear wave equation with random initial data. Then, we recall certain properties of the (forced) energy critical nonlinear wave equation. In our exposition of the local well-posedness and stability theory, we mainly rely on \cite{DLM17}. The flux estimate already played a major role in the author's work on almost sure scattering for the radial energy critical NLW \cite{Bringmann18}, and we loosely follow parts of \cite[Section 6]{Bringmann18}. \\

Let \( N_{\hi} \) be as in Proposition \ref{wp:prop_smallness}, and let \( F = F_{\hi} \) be as in \eqref{wp:eq_high_frequency}. We then decompose the solution \( u \) of \eqref{intro:eq_nlw_random} by setting \( v:= u - F \). Then, the nonlinear component \( v \) solves the forced nonlinear wave equation
\begin{equation}\label{prelim:eq_forced_nlw}
\begin{cases}
-\partial_{tt} v+ \Delta v = (v+F)^3 \qquad (t,x)\in \reals\times \reals^4~,\\
v|_{t=0}  = v_0 \in \dot{H}^1, \quad \partial_t v|_{t=0} = v_1 \in L^2~,
\end{cases}
\end{equation}
where \( v_0 := u_0 + f^\omega_{0,<N_{\hi}} \) and \(v_1 := u_1 + f^\omega_{1,<N_{\hi}} \). The randomness in the initial data \( (v_0,v_1) \) is not important, and we treat it as arbitrary data in the energy space. For the rest of this section, we treat \( F \) as an arbitrary forcing term in \( L_t^3 L_x^6(\reals\times \rfour) \), since the finer properties of \( F \) will only be relevant in Section \ref{section:a_priori}.

\subsection{Local well-posedness and stability theory}

In this section, we recall the local well-posedness of \eqref{prelim:eq_forced_nlw}. Using stability theory, we recall the reduction of Theorem \ref{main_theorem} to an a-priori energy bound. These results are well-known in the literature, see e.g. \cite{DLM17,Pocovnicu17}. 

\begin{lem}[{Local well-posedness \cite[Lemma 3.1]{DLM17}}]\label{prelim:lem_lwp}
Let \( (v_0,v_1) \in \dot{H}^1(\rfour)\times L^2(\rfour) \) and \( F \in L_t^3 L_x^6([0,\infty)\times \rfour) \). Then, there exists a time \( T > 0 \) and a unique solution \( v \colon [0,T) \times \rfour \rightarrow \reals \) satisfying
\begin{equation*}
v \in C_t^0 \dot{H}^1_x([0,T)\times \rfour) ~ \mathsmaller{\bigcap} ~ L_t^3 L_x^6([0,T)\times \rfour) \qquad \text{and} \qquad \partial_t v \in C_t^0 L_x^2([0,T)\times \rfour)~. 
\end{equation*}
\end{lem}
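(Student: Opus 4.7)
The strategy is a standard contraction mapping argument in the Duhamel formulation of \eqref{prelim:eq_forced_nlw}. Writing $v = W(t)(v_0,v_1) + w$, the problem becomes the zero-data integral equation
\begin{equation*}
w(t) = -\int_0^t \frac{\sin((t-s)|\nabla|)}{|\nabla|}\bigl(w(s)+G(s)\bigr)^3 \, ds, \qquad G := W(t)(v_0,v_1)+F.
\end{equation*}
The natural space is $X_T := L_t^3 L_x^6([0,T)\times\rfour)$, since the pair $(3,6)$ is wave-admissible in $d=4$ at $\dot H^1 \times L^2$ regularity (indeed $1/3+3/(2\cdot 6) = 7/12 \leq 3/4$ and the scaling $2-4/6-1/3=1$ is correct). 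Thus Strichartz gives simultaneously $W(t)(v_0,v_1)\in L_t^3 L_x^6 \cap C_t^0 \dot H_x^1$ with $\partial_t \in C_t^0 L_x^2$, and the dual inhomogeneous estimate
\begin{equation*}
\Big\|\int_0^t \tfrac{\sin((t-s)|\nabla|)}{|\nabla|} H(s)\,ds\Big\|_{L_t^3 L_x^6 \cap L_t^\infty \dot H^1_x} + \Big\|\partial_t \int_0^t\cdots\Big\|_{L_t^\infty L_x^2} \lesssim \|H\|_{L_t^1 L_x^2}.
\end{equation*}

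The nonlinearity is handled by H\"older: $\|(w+G)^3\|_{L_t^1 L_x^2} \leq \|w+G\|_{L_t^3 L_x^6}^3$, together with the pointwise difference bound $|a^3-b^3|\lesssim (a^2+b^2)|a-b|$, which gives the trilinear Lipschitz estimate
\begin{equation*}
\|(w_1+G)^3-(w_2+G)^3\|_{L_t^1 L_x^2} \lesssim \bigl(\|w_1+G\|_{L_t^3 L_x^6}^2+\|w_2+G\|_{L_t^3 L_x^6}^2\bigr)\|w_1-w_2\|_{L_t^3 L_x^6}.
\end{equation*}
Since $W(t)(v_0,v_1) \in L_t^3 L_x^6(\reals \times \rfour)$ by Strichartz and $F \in L_t^3 L_x^6([0,\infty)\times\rfour)$ by hypothesis, dominated convergence allows me to choose $T>0$ so small that $\|G\|_{L_t^3 L_x^6([0,T)\times\rfour)} \leq \eta$ for any prescribed $\eta>0$.

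Next, I would verify that the Duhamel map $\Phi$ defined by the right-hand side above is a contraction on the ball $B_T := \{w \in X_T : \|w\|_{L_t^3 L_x^6} \leq 2\eta\}$. Combining the inhomogeneous Strichartz estimate with H\"older gives $\|\Phi(w)\|_{L_t^3 L_x^6} \leq C(\|w\|_{L_t^3 L_x^6}+\eta)^3 \leq C(3\eta)^3$ and similarly $\|\Phi(w_1)-\Phi(w_2)\|_{L_t^3 L_x^6} \leq C(3\eta)^2\|w_1-w_2\|_{L_t^3 L_x^6}$; choosing $\eta$ such that $27C\eta^2 \leq 1/2$ closes both inequalities. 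The Banach fixed-point theorem then yields a unique $w \in B_T$; the remaining regularity $v \in C_t^0 \dot H^1_x$ and $\partial_t v \in C_t^0 L_x^2$ comes from applying the full Strichartz estimate to $w$ using the (already controlled) forcing $(w+G)^3 \in L_t^1 L_x^2$, and uniqueness in the larger class $C_t^0 \dot H^1_x \cap L_t^3 L_x^6$ follows from the same Lipschitz estimate via a standard continuity/Gronwall argument on the $L_t^3 L_x^6$ norm.

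The only real subtlety, as opposed to the unforced case, is ensuring smallness of $\|F\|_{L_t^3 L_x^6([0,T)\times \rfour)}$ for small $T$; this relies on the global hypothesis $F \in L_t^3 L_x^6([0,\infty)\times\rfour)$ combined with dominated convergence, rather than on any quantitative local estimate, so $T$ depends on both $(v_0,v_1)$ and the distribution function of $F$ in time, and not only on the norms.
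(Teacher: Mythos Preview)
Your argument is correct and is precisely the standard contraction-mapping proof one expects for this lemma. Note that the paper does not actually give its own proof here: the lemma is quoted from \cite[Lemma~3.1]{DLM17} and left unproved, so there is nothing to compare against beyond observing that your Strichartz/H\"older contraction in $L_t^3 L_x^6$ is the canonical route and matches what the cited reference does.
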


Using stability theory, \cite{DLM17} proved the following proposition. 

\begin{prop}[{Reduction to an a-priori energy bound \cite[Theorem 1.3]{DLM17}}]\label{prelim:prop_reduction}
Let \( (v_0,v_1) \in \dot{H}^1(\rfour)\times L^2(\rfour) \) and \( F \in L_t^3 L_x^6(\reals\times \rfour) \). Let \( v \) be a solution of \eqref{prelim:eq_forced_nlw}, and let \( T_+ > 0 \) be its maximal time of existence. Furthermore, assume the a-priori energy bound \begin{equation*}
\sup_{t\in [0,T_+)} E[v](t) < \infty~.
\end{equation*}
Then, \( v \) is a global solution and satisfies the global space-time bound \( \| v\|_{L_t^3 L_x^6([0,\infty)\times \rfour)} < \infty \). As a result, there exist a scattering states \( (v_0^+,v_1^+)\in \dot{H}^1(\rfour)\times L^2(\rfour) \) such that 
\begin{equation*}
\lim_{t\rightarrow +\infty} \| (v(t)-W(t)(v_0^+,v_1^+), \partial_t v(t) - \partial_t W(t) (v_0^+,v_1^+)) \|_{\dot{H}^1\times L^2} = 0~. 
\end{equation*}
\end{prop}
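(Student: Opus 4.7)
The plan is to reduce the forced problem to the deterministic one on each piece of a partition of $[0,\infty)$ into finitely many intervals, and then invoke the global space-time bound from Theorem \ref{intro:thm_deterministic} together with a stability lemma.

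First, I would use the a-priori energy bound together with Lemma \ref{prelim:lem_lwp} and a standard blow-up criterion (the solution exists as long as $\|v\|_{L^3_tL^6_x}$ stays finite on bounded subintervals, which follows from the bounded energy together with the smallness of $F$ on short intervals) to conclude that $T_+ = +\infty$. Set $E_\ast := \sup_{t\geq 0} E[v](t) < \infty$. Since $F \in L_t^3 L_x^6(\reals\times \rfour)$, for any prescribed $\epsilon > 0$ I can partition $[0,\infty)$ into a \emph{finite} collection of intervals $\{I_j\}_{j=1}^J$ with $\|F\|_{L_t^3 L_x^6(I_j\times \rfour)} \leq \epsilon$, where $J=J(F,\epsilon)$.

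Next, on each interval $I_j = [t_j,t_{j+1})$ I would compare $v$ to the deterministic solution $w_j$ of the unforced equation $-\partial_{tt} w_j + \Delta w_j = w_j^3$ with data $(w_j,\partial_t w_j)|_{t=t_j} = (v,\partial_t v)|_{t=t_j}$. Since $E[w_j](t_j) = E[v](t_j) \leq E_\ast$, Theorem \ref{intro:thm_deterministic} gives a uniform bound
\begin{equation*}
\| w_j \|_{L_t^3 L_x^6(I_j\times \rfour)} \leq C(E_\ast).
\end{equation*}
The function $v$ satisfies the same equation as $w_j$ up to the error $e_j := (v+F)^3 - v^3 = 3v^2 F + 3 v F^2 + F^3$. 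Expanding and applying Hölder's inequality in the wave-admissible norm pair used for stability yields, schematically,
\begin{equation*}
\| e_j \|_{L_t^1 L_x^2(I_j\times \rfour)} \lesssim \|v\|_{L_t^3 L_x^6(I_j)}^2 \|F\|_{L_t^3 L_x^6(I_j)} + \|v\|_{L_t^3 L_x^6(I_j)} \|F\|_{L_t^3 L_x^6(I_j)}^2 + \|F\|_{L_t^3 L_x^6(I_j)}^3.
\end{equation*}
Choosing $\epsilon$ sufficiently small in terms of $C(E_\ast)$ (and $E_\ast$) and applying the energy-critical NLW stability lemma (as in \cite{DLM17,Pocovnicu17}), I obtain $\|v\|_{L_t^3 L_x^6(I_j\times \rfour)} \leq C'(E_\ast)$ on each $I_j$. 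Summing over the finitely many $j$ yields the global bound $\|v\|_{L_t^3 L_x^6([0,\infty)\times \rfour)} < \infty$.

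Finally, scattering follows by a routine Duhamel argument: the Duhamel representation
\begin{equation*}
\bigl(v(t),\partial_t v(t)\bigr) = W(t)(v_0,v_1) + \int_0^t \bigl(\tfrac{\sin((t-s)|\nabla|)}{|\nabla|}, \cos((t-s)|\nabla|)\bigr) (v+F)^3(s)\, ds
\end{equation*}
together with Strichartz estimates and the global $L_t^3 L_x^6$ bounds on $v$ and $F$ shows that the nonlinearity $(v+F)^3$ lies in a dual Strichartz space over $[0,\infty)$, so the integral converges in $\dot{H}^1\times L^2$ as $t\to\infty$. Defining
\begin{equation*}
(v_0^+,v_1^+) := (v_0,v_1) + \int_0^\infty \bigl(-\tfrac{\sin(s|\nabla|)}{|\nabla|}, \cos(s|\nabla|)\bigr)(v+F)^3(s)\, ds
\end{equation*}
gives the desired scattering state. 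The main obstacle, and the reason the statement is nontrivial, is the iteration in the second step: the stability lemma must be applied on each of the finitely many intervals without any loss accumulating in a way that depends on $J$, which is why one exploits the deterministic \emph{global} bound $C(E_\ast)$ on each $w_j$ rather than a purely local one.
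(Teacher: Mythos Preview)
Your approach is essentially the standard one from \cite{DLM17,Pocovnicu17}, which is exactly what the paper invokes here (the paper does not give its own proof but cites \cite[Theorem 1.3]{DLM17}). One small point: in your stability step you write the error as $e_j=(v+F)^3-v^3$, which depends on the unknown $\|v\|_{L_t^3L_x^6(I_j)}$ and thus makes the estimate circular as stated. The cleaner (and standard) formulation is to view the deterministic solution $w_j$ as an approximate solution to the \emph{forced} equation, with error $e_j=(w_j+F)^3-w_j^3$; then $\|e_j\|_{L_t^1L_x^2(I_j)}\lesssim C(E_\ast)^2\epsilon+C(E_\ast)\epsilon^2+\epsilon^3$ involves only known quantities, and the stability lemma directly produces $v$ on $I_j$ with $\|v-w_j\|_{L_t^3L_x^6(I_j)}$ small, with no bootstrap needed. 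With this adjustment your argument is complete and coincides with the one in the literature.
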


Using Lemma \ref{prelim:lem_lwp} and Proposition \ref{prelim:prop_reduction}, we have reduced the proof of Theorem \ref{main_theorem} to an a-priori bound on the energy of \( v \).

\subsection{Flux estimates}\label{section:flux}
As before, we let \( v \colon I \times \rfour \rightarrow \reals \) be a solution to the forced equation \eqref{prelim:eq_forced_nlw}. 
Recall that the (symmetric) energy-momentum tensor for the energy critical nonlinear wave equation is given by 
\begin{align*}
T^{00}= T^{00}[v] &:= \frac{1}{2} \big( (\partial_t v)^2+ |\nabla v|^2\big) + \frac{1}{4} v^4~, \\
T^{j0} = T^{j0}[v] &:= - \partial_t v \cdot \partial_{x_j} v~, \\
T^{jk} = T^{jk}[v] &:= \partial_{x_j} v \partial_{x_k} v - \frac{\delta_{jk}}{4} (-\partial_{tt}+ \Delta)(v^2) + \frac{\delta_{jk}}{2} v^4~. 
\end{align*}
The component \( T^{00} \) is the energy density, the component \( T^{j0} \) is the \( j\)-th momentum/energy flux, and the components \( T^{jk} \) are called the momentum flux. If \( v \) solves the energy critical nonlinear wave equation \eqref{intro:eq_nlw}, then the energy-momentum tensor is divergence free. This fails for solutions to the forced equation \eqref{prelim:eq_forced_nlw}; however, one can still expect that the error terms have lower order. Setting \( \mathcal{N} = (v+F)^3 - v^3 \), a short computation shows that 
\begin{align}
\partial_t T^{00} + \partial_{x_k} T^{0k}& = - \mathcal{N} \partial_t v  \label{prelim:eq_energy_flux}~,  \\
\partial_t T^{j0} + \partial_{x_k} T^{jk} &= \mathcal{N} \partial_{x_j} v - \frac{1}{2} \partial_{x_j} ( \mathcal{N} v ) \label{prelim:eq_momentum_flux}~. 
\end{align}
As in earlier work on almost sure scattering for radial data \cite{Bringmann18,DLM17,DLM18,KMV17}, the main goal of this paper is to bound the energy of \( v \). In terms of  the energy-momentum tensor, the (total) energy can be written as 
\begin{equation*}
E[v](t):= \int_{\rfour} T^{00}(t,x) \dx ~. 
\end{equation*}
For future use, we record the following consequence of \eqref{prelim:eq_energy_flux}. 
\begin{lem}[Total energy increment]
Let \( v \colon I \times \rfour \rightarrow \reals \) be a solution of \eqref{prelim:eq_forced_nlw}, and let \( a,b \in I \) with \( a \leq b\). Then, we have that 
\begin{equation}
E(b)-E(a)  = - \int_a^b \int_{\rfour} \mathcal{N} \partial_t v \dx \dt 
			\leq 6 \int_a^b \int_{\rfour} |F| |v|^2 |\partial_t v|\dx \dt + 3 \int_a^b \int_{\rfour} |F|^3 |\partial_t v|\dx \dt~. \label{prelim:eq_total_energy_increment}
\end{equation}
\end{lem}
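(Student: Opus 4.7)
The plan is to derive the claimed identity by computing $\frac{d}{dt} E[v]$ directly from the forced equation, and then estimate the resulting nonlinear expression pointwise.

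First I would differentiate the energy density under the integral sign (justified by the regularity $v \in C^0_t \dot{H}^1_x$, $\partial_t v \in C^0_t L^2_x$ from Lemma \ref{prelim:lem_lwp}, together with a standard approximation by smooth data). This yields
\begin{equation*}
\frac{d}{dt} E[v](t) = \int_{\reals^4} \bigl( \nabla v \cdot \nabla \partial_t v + \partial_t v \cdot \partial_{tt} v + v^3 \, \partial_t v \bigr)\, \dx~.
\end{equation*}
Integrating by parts in the first term transforms $\nabla v \cdot \nabla \partial_t v$ into $-\Delta v \cdot \partial_t v$, and then the forced equation $-\partial_{tt} v + \Delta v = (v+F)^3$ gives $\partial_{tt} v - \Delta v = -(v+F)^3$. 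Consequently, the integrand collapses to $\partial_t v \bigl( v^3 - (v+F)^3 \bigr) = - \mathcal{N}\, \partial_t v$. Integrating in time from $a$ to $b$ therefore produces the equality in \eqref{prelim:eq_total_energy_increment}.

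For the inequality, I would simply expand
\begin{equation*}
\mathcal{N} = (v+F)^3 - v^3 = 3 v^2 F + 3 v F^2 + F^3~,
\end{equation*}
so that $|\mathcal{N}| \leq 3 |F|\, v^2 + 3 F^2\, |v| + |F|^3$. The first and third summands are already of the desired form. To handle the cross-term I would apply the weighted AM--GM inequality
\begin{equation*}
3 F^2 |v| \leq \tfrac{3}{2}\, |F|\, v^2 + \tfrac{3}{2}\, |F|^3~,
\end{equation*}
which follows from $ab \leq \tfrac{1}{2}(a^2+b^2)$ with $a = |v||F|^{1/2}$ and $b = |F|^{3/2}$. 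Combining the three contributions gives $|\mathcal{N}| \leq \tfrac{9}{2} |F|\, v^2 + \tfrac{5}{2}|F|^3 \leq 6 |F|\, v^2 + 3 |F|^3$. Multiplying by $|\partial_t v|$ and integrating over $[a,b] \times \reals^4$ then produces the stated bound.

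The computation is essentially routine; the only technical point is the justification of the integration by parts and of differentiation under the integral at the $\dot{H}^1 \times L^2$ regularity level. This is standard and is handled by approximating $(v_0, v_1)$ by smooth, rapidly decaying data and invoking the local well-posedness of Lemma \ref{prelim:lem_lwp} together with persistence of regularity, so no genuine obstacle arises.
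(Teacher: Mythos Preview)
Your proof is correct and follows essentially the same approach as the paper: the paper states this lemma as a direct consequence of the energy-flux identity \eqref{prelim:eq_energy_flux} (which is exactly your computation of $\frac{d}{dt}E[v]$ phrased in terms of the energy-momentum tensor), and the pointwise bound $|\mathcal{N}|\leq 6|F|v^2+3|F|^3$ via expansion and AM--GM is the intended argument.
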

We will later see that the second summand on the right-hand side of \eqref{prelim:eq_total_energy_increment} can be bounded directly using Hölder's inequality and probabilistic Strichartz estimates. In contrast, no such estimate is available for the first summand, and we need the wave packet decomposition to control this term. 

Once we employ the wave packet decomposition, it will be natural to study the energy  on a time and length scale \( \sim N \geq 1\). We fix \( t_0 \in N \mathbb{N}_0 \) and \( x_0 \in N\zfour\), and define the local energy
\begin{equation}
E_{t_0,x_0}^N[v](t) := \int_{\|x-x_0 \|_\infty \leq 2N-|t-t_0|} T^{00}(t,x)\dx, \qquad \text{where}~  t \in [t_0,t_0+N]~. 
\end{equation}
Thus, this definition is adapted to the truncated \( \ell^\infty\)-cone \( \KN\), which is  given by
\begin{equation}\label{prelim:eq_truncated_cone}
K_{t_0,x_0}^N := \{ (t,x) \in [t_0,t_0+N] \times \rfour \colon \|x-x_0\|_\infty \leq 2N-|t-t_0| \} ~. 
\end{equation}
It might be more appropriate to call \( K_{t_0,x_0}^N \)  a pyramid (see Figure \ref{figure:linf}); unfortunately, the letter \( P \) is already heavily used in our notation, so that we decided to use the letter \( K \). Our reason for using the \( \ell^\infty\)-norm, instead of the more common \( \ell^2\)-norm, lies in the induction on scales argument (Proposition \ref{prelim:eq_total_energy_increment}). Then, it will be an advantage to write \( \KN \) as the union of finitely overlapping smaller cones \( K_{\tau_0,y_0}^{M} \), which are contained in \( \KN \). Using finite speed of propagation and the inequality \( \| \cdot \|_{\ell^\infty(\rfour)} \leq \| \cdot \|_{\ell^2(\rfour)} \), one can still meaningfully restrict the nonlinear wave equation to \( \KN \). 

\begin{figure}[t!]
\begin{center}
\includegraphics[height=6cm]{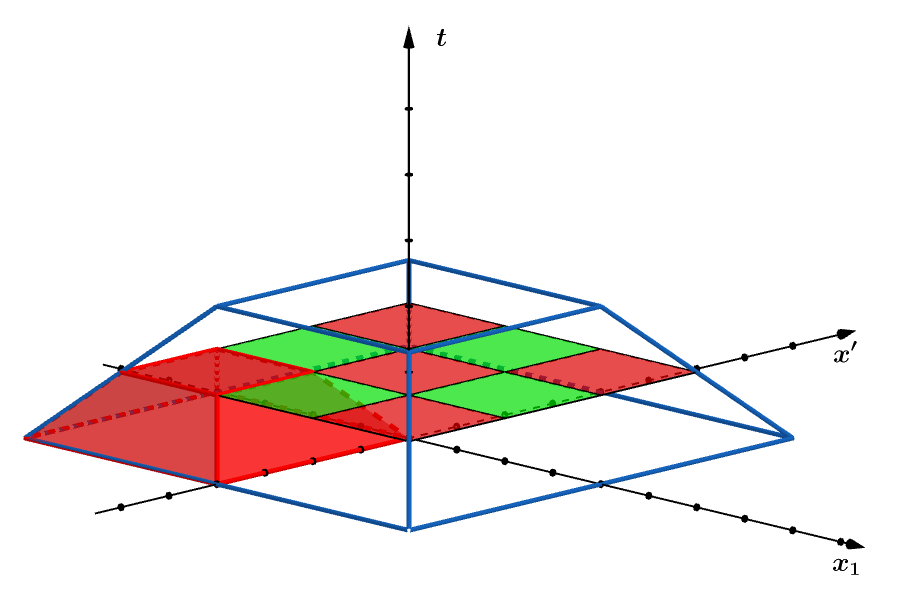}
\end{center}
\caption*{\small{In this figure, we illustrate the truncated \( \ell^\infty\)-cone, and its decomposition into smaller cones. The blue lines are the edges of a large \( \ell^\infty\)-cone.  The red and green squares are the tops of smaller truncated cones. In the lower left corner, we have drawn a single one of these smaller cones. As can be seen from this figure, no smaller cone has to exit the large cone. }}
\caption{Decomposition of the \( \ell^\infty\)-cone}
\label{figure:linf}
\end{figure}

\begin{lem}[Local energy increment]\label{prelim:ref_local_energy_increment}
Let \( v \) be a solution to the forced equation \eqref{prelim:eq_forced_nlw}, let \( N \geq 1 \),  let 
\( t_0 \in N \mathbb{N}_0 \), and let \( x_0 \in N \zfour\). Then, we have that 
\begin{equation}\label{prelim:eq_local_energy_increment}
\sup_{t \in [t_0,t_0+N]} E_{t_0,x_0}^N[v](t) \leq E_{t_0,x_0}^N[v](t_0) + 6 \int_{\KN}  |F| |v|^2 |\partial_t v| \dx\dt + 3 \int_{\KN}  |F|^3 |\partial_t v| \dx\dt~. 
\end{equation}
\end{lem}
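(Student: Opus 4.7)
The plan is to apply the spacetime divergence theorem to the conservation law \eqref{prelim:eq_energy_flux}, integrated over a truncated slab of the pyramid \( \KN \). Fix \( t^\ast \in [t_0, t_0 + N] \), set
\[
R(t^\ast) := \KN \cap \{ t_0 \leq t \leq t^\ast \}~,
\]
and integrate \eqref{prelim:eq_energy_flux} over \( R(t^\ast) \). The boundary \( \partial R(t^\ast) \) splits into a top face at \( t = t^\ast \) (contributing \( +E^N_{t_0,x_0}[v](t^\ast) \)), a bottom face at \( t = t_0 \) (contributing \( -E^N_{t_0,x_0}[v](t_0) \)), and eight planar lateral faces of the form \( \pm(x_j - x_{0,j}) = 2N - (t - t_0) \), \( j=1,\dots,4 \). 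The divergence theorem applied to the spacetime vector field \( (T^{00}, T^{01}, \dots, T^{04}) \) then produces
\[
E^N_{t_0,x_0}[v](t^\ast) - E^N_{t_0,x_0}[v](t_0) + \Phi_{\mathrm{lat}}(t^\ast) \;=\; -\int_{R(t^\ast)} \mathcal{N} \, \partial_t v \, \dx\, \dt~,
\]
where \( \Phi_{\mathrm{lat}}(t^\ast) \) denotes the flux integral over the lateral boundary.

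The crucial step, and the reason the argument favors the \( \ell^\infty\)-cone over the Euclidean light cone, would be to show that \( \Phi_{\mathrm{lat}}(t^\ast) \geq 0 \). On the face with outward spacetime normal proportional to \( (1, \pm e_j) \), a direct computation from the definitions of \( T^{00} \) and \( T^{0j} \) gives
\[
T^{00} \pm T^{0j} \;=\; \tfrac{1}{2}(\partial_t v \mp \partial_{x_j} v)^2 + \tfrac{1}{2}\sum_{k \neq j}(\partial_{x_k} v)^2 + \tfrac{1}{4} v^4 \;\geq\; 0~,
\]
which reflects the fact that each lateral face of \( \KN \) is a null hyperplane for the wave equation. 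Discarding \( \Phi_{\mathrm{lat}}(t^\ast) \) and bounding the remaining spacetime integral by its absolute value then yields
\[
E^N_{t_0,x_0}[v](t^\ast) \;\leq\; E^N_{t_0,x_0}[v](t_0) + \int_{\KN} |\mathcal{N}| \, |\partial_t v| \, \dx\, \dt~.
\]

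To conclude, I would expand \( \mathcal{N} = (v+F)^3 - v^3 = 3 v^2 F + 3 v F^2 + F^3 \) and absorb the mixed term via Young's inequality \( 2 |v| F^2 \leq v^2 |F| + |F|^3 \). This yields the pointwise bound \( |\mathcal{N}| \leq \tfrac{9}{2} v^2 |F| + \tfrac{5}{2} |F|^3 \leq 6 v^2 |F| + 3 |F|^3 \), after which taking the supremum over \( t^\ast \in [t_0, t_0+N] \) produces \eqref{prelim:eq_local_energy_increment}.

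The main technical obstacle is that the solution has only energy regularity, so the divergence theorem cannot be applied pointwise. I would resolve this by the standard density argument: approximate \( (v_0, v_1) \) and \( F \) by smooth data, carry out the above computation for the resulting classical solutions (where every manipulation is rigorously justified), and then pass to the limit using continuity of the components of \( T^{\mu\nu} \) and of \( \mathcal{N}\, \partial_t v \) in \( L^1_{\mathrm{loc}} \) afforded by Lemma \ref{prelim:lem_lwp} together with the Strichartz integrability of \( F \). The lower-dimensional corners and edges of the \( \ell^\infty\)-cone where the outward normal is undefined form a surface-measure-zero subset of \( \partial R(t^\ast) \) and may be safely ignored.
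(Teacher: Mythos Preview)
Your proof is correct and follows essentially the same route as the paper: integrate the energy-flux identity over the truncated cone, discard the lateral boundary contribution by sign, and bound $|\mathcal{N}|$ pointwise. The paper differentiates $E^N_{t_0,x_0}[v](t)$ in $t$ and applies the spatial divergence theorem on each time slice, then controls the boundary term via Cauchy--Schwarz ($|T^{0j}\nu_j|\leq |\partial_t v|\,\|\nabla v\|_2\leq T^{00}$), whereas you apply a spacetime divergence theorem and exhibit $T^{00}\pm T^{0j}$ as an explicit sum of squares; these are equivalent presentations of the same computation.

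One small interpretive correction: the nonnegativity of the lateral flux is \emph{not} special to the $\ell^\infty$-cone---it holds for any cone with slope at most one, in particular for the standard $\ell^2$ light cone, which is why the classical flux estimate works. The paper's reason for preferring $\KN$ is stated just above \eqref{prelim:eq_truncated_cone}: the $\ell^\infty$-cone decomposes cleanly into finitely overlapping smaller $\ell^\infty$-cones contained in it (Figure~\ref{figure:linf}), which is what the induction-on-scales argument in Proposition~\ref{apriori:prop_induction_scales} needs.
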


\begin{proof}
Using \eqref{prelim:eq_energy_flux}, we have that 
\begin{align*}
\frac{\mathrm{d}}{\dt} \En[v](t) &= - \int \displaylimits_{ \substack{\| x - x_0 \|_\infty \\= 2N- |t-t_0|}} T^{00}\dsigma(x) + \int\displaylimits_{ \substack{\| x - x_0 \|_\infty \\\leq 2N- |t-t_0|}}  \partial_t T^{00}\dx \\
&=   \int\displaylimits_{ \substack{\| x - x_0 \|_\infty \\= 2N- |t-t_0|}}  (-T^{00}+T^{0j} \nu_j ) \dsigma(x) - \int\displaylimits_{ \substack{\| x - x_0 \|_\infty \\ \leq2N- |t-t_0|}}  \mathcal{N} \partial_t v \dx
\end{align*}
Here, \( \nu \) is the outward unit normal to the cube. From Cauchy-Schwarz, it follows that \( |T^{0j}\nu_j |\leq |\partial_t v| \| \nabla v \|_2 \leq T^{00} \). After integrating in time, this completes the proof.
\end{proof}
To simplify the notation, we now write 
\begin{equation}
\EN[v] := \sup_{t \in [t_0,t_0+N]} E_{t_0,x_0}^N[v](t) ~. 
\end{equation}

In the following, we want to deduce a flux estimate for the solution \( v \) of the forced NLW \eqref{prelim:eq_forced_nlw}. Here, we encounter a minor technical problem. Let \( (t^\prime,x^\prime) \in \KN \)  be a point in the truncated \(\ell^\infty\)-cone. We then want to control the potential \( |v|^4 \) on the truncated light-cone \begin{equation*}
C^N_{t^\prime,x^\prime}:= \{ (t,x)\in [t_0,t_0+N]\times \rfour \colon |t-t^\prime | = \|x-x^\prime \|_{\small 2}\}~. 
\end{equation*}
Unfortunately, \( C^N_{t^\prime,x^\prime} \) may not be contained in \( \KN \), and hence we cannot expect to bound this solely by \( \EN[v] \). Since the flux estimate is derived through a monotonicity formula for the local energy, this issue persists even if we are only interested in the portion of \( C^N_{t^\prime,x^\prime} \) intersecting \( \KN \). To solve this problem, while still keeping the same energy increment as in \eqref{prelim:eq_local_energy_increment}, we introduce the notion of a locally forced solution.

\begin{definition}[Locally forced solution]\label{prelim:def_locally_forced_solution}
Let \( t_0 \in N\mathbb{N}_0 \) and \( x_0 \in N \zfour \). We call \( w \colon \reals \times \rfour \rightarrow \reals \) a \( \KN \)-locally forced solution if it solves
\begin{equation}\label{prelim:eq_locally_forced}
\begin{cases}
-\partial_{tt} w + \Delta w = (1_{\KN} F+w)^3 \qquad (t,x)\in \reals\times \reals^4\\
w|_{t=t_0}  =w_0 \in \dot{H}^1(\rfour), \quad \partial_t w|_{t=t_0} = w_1\in L^2(\rfour) ~
\end{cases}~.
\end{equation}
We also require that the functions  \( (w_0,w_1)\) agree with \( (v(t_0),\partial_t v(t_0)) \) on the cube \( \|x-x_0\|_\infty\leq 2N \).  
\end{definition}

\begin{rem}
From finite speed of propagation, it follows that \( w|_{\KN} = v|_{\KN} \). 
\end{rem}
For the same reasons as described in the last paragraph, we also use the energy on a slightly larger region. To this end, we define
\begin{equation*}
\widetilde{E}_{t_0,x_0}^N[w](t) := \int_{\|x-x_0 \|_\infty \leq 16N-|t-t_0|} T^{00}[w](t,x) \dx, \qquad \text{where}~  t \in [t_0,t_0+N]~. 
\end{equation*}
Thus, this definition is adapted to the fattened cone \( \KNtilde \), which is defined in \eqref{wp:eq_fattened_cone}. 
We also set 
\begin{equation}
\ENtilde[w]:= \sup_{t\in [t_0,t_0+N]} \widetilde{E}_{t_0,x_0}^N[w](t) ~. 
\end{equation}

\begin{lem}[Local flux estimate]\label{prelim:lem_local_flux}
Let \( t_0 \in N\mathbb{N}_0 \), let \( x_0 \in N \zfour \), and let \( w \) be  a \( \KN \)-locally forced solution. Then, we have that 
\begin{equation}\label{prelim:eq_local_flux}
\sup_{\substack{ t^\prime \in [t_0,t_0+N] \\ \| x^\prime-x_0 \|_{\ell^\infty}\leq 8N}} ~ \int \displaylimits_{\substack{\|x-x^\prime\|_2= |t-t^\prime| \\ t\in[t_0,t_0+N]}} \frac{w^4}{4} ~ \dsigma(t,x) \leq 4 \ENtilde + 12 \int\displaylimits_{\KN} |F| (|w|+|F|)^2 |\partial_t w| \dx \dt ~. 
\end{equation}
\end{lem}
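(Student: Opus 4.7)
The plan is to derive \eqref{prelim:eq_local_flux} from the divergence identity for the energy-momentum tensor of $w$, applied on a backward (and a forward) truncated light cone with apex at $(t',x')$.

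Fix $(t',x')$ with $t'\in[t_0,t_0+N]$ and $\|x'-x_0\|_\infty\leq 8N$. The piece of the light cone over $t\in[t_0,t_0+N]$ splits into a backward slant $S_-$ bounding the region
\[
\Gamma_-:=\{(t,x):t_0\leq t\leq t',\ \|x-x'\|_2\leq t'-t\}
\]
and a forward slant $S_+$ bounding $\Gamma_+:=\{(t,x):t'\leq t\leq t_0+N,\ \|x-x'\|_2\leq t-t'\}$. I will work out the backward case in detail; the forward case is identical after time reversal.

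Since $w$ solves $-\partial_{tt}w+\Delta w=(1_{\KN}F+w)^3$, a direct computation analogous to the derivation of \eqref{prelim:eq_energy_flux} gives
\[
\partial_t T^{00}[w]+\partial_{x_k}T^{0k}[w]=-\mathcal{N}_w\,\partial_t w, \qquad \mathcal{N}_w:=(1_{\KN}F+w)^3-w^3.
\]
I apply the divergence theorem in $\mathbb{R}^{1+4}$ to $(T^{00},T^{0k})$ on $\Gamma_-$. The boundary contributions are (i) the bottom disk $\{t=t_0\}\times B(x',t'-t_0)$ with outward normal $(-1,\mathbf{0})$, (ii) the slant $S_-$ with outward unit normal $\tfrac{1}{\sqrt{2}}(1,\nu)$ where $\nu=(x-x')/\|x-x'\|_2$, and (iii) the apex, which has measure zero. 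On $S_-$ the pointwise identity
\[
T^{00}+T^{0k}\nu_k=\tfrac12(\partial_t w-\nabla w\cdot\nu)^2+\tfrac12|\nabla_\omega w|^2+\tfrac14 w^4\geq \tfrac14 w^4,
\]
with $\nabla_\omega w$ the sphere-tangential gradient, shows that the slant flux dominates $\int_{S_-}\tfrac{w^4}{4}\,d\sigma$ up to an absolute constant.

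Rearranging the divergence identity then gives
\[
\int_{S_-}\tfrac{w^4}{4}\,d\sigma \lesssim \int_{B(x',t'-t_0)}T^{00}[w](t_0,x)\,dx + \int_{\Gamma_-}|\mathcal{N}_w|\,|\partial_t w|\,dx\,dt.
\]
The bottom disk satisfies $B(x',t'-t_0)\subset\{\|x-x_0\|_\infty\leq 9N\}$ since $t'-t_0\leq N$ and $\|x'-x_0\|_\infty\leq 8N$; in particular it lies inside the fattened cone $\KNtilde$, so the first term is bounded by $\ENtilde[w]$. For the forcing term, the algebraic identity $(a+b)^3-b^3=a(3b^2+3ab+a^2)$ applied to $a=1_{\KN}F$, $b=w$ gives $|\mathcal{N}_w|\leq 3\cdot 1_{\KN}|F|(|w|+|F|)^2$, which localizes the spacetime integral to $\KN$ and produces exactly the forcing term in \eqref{prelim:eq_local_flux}.

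The forward slant $S_+$ is treated identically on $\Gamma_+$, whose top disk $B(x',t_0+N-t')$ again lies in $\KNtilde$ by the same computation, and adding the two contributions yields \eqref{prelim:eq_local_flux} with slack in the numerical constants. The one step that requires any care, and the main reason $\KNtilde$ is defined with the factor $16$ rather than matching $\KN$, is verifying that the bottom and top disks of the backward/forward truncated cones with apices on the set $\|x'-x_0\|_\infty\leq 8N$ all fit inside $\KNtilde$; everything else is a routine energy-momentum computation combined with the pointwise bound on $\mathcal{N}_w$.
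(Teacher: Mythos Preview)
Your proof is correct and follows essentially the same route as the paper. The paper differentiates the localized energy $e(t)=\int_{\|x-x'\|_2\le |t-t'|}T^{00}\,dx$ in time and integrates over $[t_0,t']$ and $[t',t_0+N]$, while you apply the space-time divergence theorem directly on $\Gamma_\pm$; these are equivalent formulations of the same computation, and both hinge on the pointwise bound $T^{00}+T^{0k}\nu_k\ge \tfrac14 w^4$ on the slant together with the containment of the relevant spatial balls in $\KNtilde$.
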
 

We emphasize that, even though the energy \( \ENtilde \) is measured on a truncated \( \ell^\infty \)-cone, the flux is still controlled on a light cone. 
The estimate \eqref{prelim:eq_local_flux}, however, only controls \( w \) on a lower-dimensional surface in space-time, and thus cannot directly be used to bound the energy increment. In our main argument, we rely on the following averaged version. 

\begin{lem}[Averaged local flux estimate]\label{prelim:lem_averaged_flux} Let \( N\geq N_{\hi}  \), let \( t_0 \in N\mathbb{N}_0 \), let \( x_0 \in N \zfour \), and let \( w \) be  a \( \KN \)-locally forced solution. Then, we have that 
\begin{align}
\FluxNtilde&:= 
\sup_{\substack{ t^\prime \in [t_0,t_0+N]\\ \|x^\prime -x_0\|_\infty\leq 4N}} ~\int \displaylimits_{\substack{|\|x-x^\prime\|_2- |t-t^\prime||\leq N^{10\delta} \\ t\in[t_0,t_0+N]}} \frac{w^4}{4} ~\dx\dt \label{prelim:eq_flux} \\
&\lesssim N^{40\delta} \left( \ENtilde +  \int\displaylimits_{\KN} |F| (|w|+|F|)^2 |\partial_t w| \dx \dt \right)~. \notag
\end{align}
\end{lem}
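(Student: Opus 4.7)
The plan is to foliate the thickened light cone by a one-parameter family of genuine shifted light cones, apply the local flux estimate (Lemma \ref{prelim:lem_local_flux}) to each leaf of the foliation, and integrate in the foliation parameter. To set this up, I would fix a vertex $(t^\prime, x^\prime)$ achieving (or nearly achieving) the supremum defining $\FluxNtilde$, and by time-reversal symmetry restrict attention to the forward part $t \geq t^\prime$ of the tube (the backward part is identical). On the forward tube, introduce the phase function $\Phi(t,x) := \|x - x^\prime\|_2 - (t - t^\prime)$; its level set $\{\Phi = s\}$ is precisely the forward light cone with vertex $(t^\prime - s, x^\prime)$, and its space-time gradient has the constant magnitude $|\nabla_{t,x} \Phi| = \sqrt{2}$ away from the axis $\{x = x^\prime\}$.

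Split the forward tube into a ``near-vertex'' piece $A := \{\|x - x^\prime\|_2 \leq N^{10\delta}\} \cap \{|t - t^\prime|\leq 2 N^{10\delta}\}$ and its complement. On $A$, where $|\nabla \Phi|$ degenerates, I would use the crude bound
\begin{equation*}
\int_A \frac{w^4}{4}\,dx\,dt \;\leq\; 4 N^{10\delta} \cdot \sup_{t\in[t_0,t_0+N]}\int_{\|x-x_0\|_\infty\leq 15N} \frac{w^4}{4}\,dx \;\lesssim\; N^{10\delta}\,\ENtilde,
\end{equation*}
using that $w^4/4$ is one of the nonnegative summands of $T^{00}[w]$ and that $A \subseteq \KNtilde$ for $N$ large. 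On the complementary piece the coarea formula (with $|\nabla \Phi| = \sqrt{2}$) gives
\begin{equation*}
\int \frac{w^4}{4}\,dx\,dt \;=\; \frac{1}{\sqrt{2}} \int_{-N^{10\delta}}^{N^{10\delta}} \int_{C_s} \frac{w^4}{4}\,d\sigma\,ds,
\end{equation*}
where $C_s$ denotes the portion of the forward cone $\{\Phi = s\}$ lying in $[t_0,t_0+N]\times \rfour$. Applying Lemma \ref{prelim:lem_local_flux} with shifted vertex $(t^\prime - s, x^\prime)$ bounds the inner integral by $\ENtilde + \int_{\KN} |F|(|w|+|F|)^2|\partial_t w|\,dx\,dt$, and the outer $s$-integral contributes only a factor of $2N^{10\delta}$.

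The main technical obstacle I anticipate is that the shifted vertex time $t^\prime - s$ can lie slightly outside $[t_0, t_0+N]$, since $|s|$ can be as large as $N^{10\delta}$. My plan is to handle this by re-running the proof of Lemma \ref{prelim:lem_local_flux} on the slightly enlarged cone with vertex $(t^\prime - s, x^\prime)$ truncated to $t \in [t_0, t_0+N]$; because $N^{10\delta} \ll N$, this enlarged cone still lies inside the fattened pyramid $\KNtilde$, and the same monotonicity argument (based on \eqref{prelim:eq_energy_flux} and the Cauchy--Schwarz bound $|T^{0j}\nu_j|\leq T^{00}$) yields the identical bound with $\ENtilde$ on the right-hand side. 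The generous power $N^{40\delta}$ in the statement leaves ample slack to absorb this boundary enlargement, combinatorial constants arising in the foliation, and any $O(1)$ loss coming from reconciling the surface measure used in Lemma \ref{prelim:lem_local_flux} with the Euclidean surface measure appearing in the coarea formula.
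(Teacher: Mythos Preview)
Your approach is correct and shares the paper's core idea: foliate the thickened cone by exact light cones and apply Lemma~\ref{prelim:lem_local_flux} leaf by leaf. The difference is in the choice of foliation parameter. You shift the \emph{time} coordinate of the vertex via the level sets of $\Phi(t,x)=\|x-x'\|_2-(t-t')$ and integrate in $s$ using the coarea formula; the paper instead shifts the \emph{spatial} vertex, integrating \eqref{prelim:eq_local_flux} over $y'$ in a ball of radius $\sim N^{10\delta}$ around $x'$ (the $4$-dimensional volume of this ball is what produces the factor $N^{40\delta}$). The paper's choice is slightly cleaner precisely because it sidesteps the obstacle you identified: since Lemma~\ref{prelim:lem_local_flux} already allows vertices with $\|x'-x_0\|_\infty\leq 8N$, a spatial perturbation of size $N^{10\delta}$ from the range $\|x'-x_0\|_\infty\leq 4N$ stays in bounds automatically, and no re-opening of the monotonicity proof is needed. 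Your workaround for the time shift (re-running the argument with a vertex time in $[t_0-N^{10\delta},\,t_0+N+N^{10\delta}]$, using $N^{10\delta}\ll N$ so the relevant balls remain inside $\KNtilde$) is valid, but the paper's route reduces the whole lemma to a single sentence.
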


The appeareance of \( N^{10\delta}\) is for technical reasons only, and the reader is encouraged to mentally replaced it by \( 1 \). This will later help us to deal with the spatial tails of the wave packets. 

\begin{proof}[Proof of Lemma \ref{prelim:lem_local_flux}] 
For the duration of this proof, we define
\begin{equation*}
e(t):= \int_{\| x - x^\prime\|_2 \leq |t-t^\prime|} T^{00}(t,x) \dx 
\end{equation*}
From finite speed of propagation, we expect \( e(t) \) to be (nearly) non-increasing on \( [t_0,t^\prime] \) and non-decreasing on \( [t^\prime,t_0+N] \). From the assumptions above, it follows that \( {\| x - x^\prime\|_2 \leq |t-t^\prime|}  \) implies
\begin{equation*}
\| x-x_0 \|_{\infty} \leq \| x-x^\prime \|_{\infty} + \| x^\prime - x_0 \|_\infty \leq \| x-x^\prime \|_2 + 8N \leq |t-t^\prime| + 8N \leq 10N - |t-t_0|~. 
\end{equation*}
Thus, we obtain that \( e(t) \leq \ENtilde \) for all \( t \in [t_0,t_0+N] \).  Using \eqref{prelim:eq_energy_flux}, we obtain for all \( t \in [t^\prime,t_0+N] \) that 
\allowdisplaybreaks
\begin{align*}
\frac{\mathrm{d}}{\dt} e(t) &= \int_{\| x - x^\prime\|_2 \leq |t-t^\prime|} \partial_t T^{00}(t,x) \dx + \int_{\| x - x^\prime\|_2 = |t-t^\prime|} T^{00}(t,x) \dsigma(x)\\
&= - \int_{\| x - x^\prime\|_2 \leq |t-t^\prime|} \mathcal{N} \partial_t v \dx - \int_{\| x - x^\prime\|_2 \leq |t-t^\prime|} \partial_{x_k} T^{0k} \dx + \int_{\| x - x^\prime\|_2 = |t-t^\prime|} T^{00}(t,x) \dsigma(x) \\
&= - \int_{\| x - x^\prime\|_2 \leq |t-t^\prime|} \mathcal{N} \partial_t v \dx + \int_{\| x - x^\prime\|_2 = |t-t^\prime|} (T^{00}(t,x) + T^{0k} \nu_k) \dsigma(x) \\
&\geq -6 \int_{\| x - x^\prime\|_2 \leq |t-t^\prime|} |F| ( |F|+|v|)^2 |\partial_t v|\dx + \int_{\| x - x^\prime\|_2 = |t-t^\prime|} \frac{v^4}{4} \dsigma(x)~. 
\end{align*}
Integrating this inequality in time, we obtain the result on \( [t^\prime,t_0+N] \). The bound on \( [t_0,t^\prime] \) is similar.
\end{proof}

\begin{proof}[Proof of Lemma \ref{prelim:lem_averaged_flux}] Since \( N\geq N_{\hi} \geq \eta^{-1} \), we have that \( N^{10\delta} \ll N \). 
We then simply integrate \eqref{prelim:eq_local_flux} over a spatial ball of radius \( \sim N^{10\delta} \) around \( x^\prime \). 
\end{proof}

\section{The energy increment and induction on scales}\label{section:a_priori}

We are now ready to finally bound the energy increment of the nonlinear component \( v \). The argument roughly splits into two parts: A single scale analysis and induction on scales. 

For technical reasons, we define a flux-term involving a thinner neighborhood of the cone. More precisely, we let 
\begin{equation*}
\FluxN[w] :=\sup_{\substack{(t^\prime,x^\prime)\in [t_0,t_0+N]\times \rfour\\ \|x^\prime -x_0\|_\infty\leq 4N}} \int \displaylimits_{\substack{|\|x-x^\prime\|_2- |t-t^\prime||\leq N^{5\delta} \\ t\in[t_0,t_0+N]}} \frac{w^4}{4} ~\dx\dt \\
\end{equation*}
Recall that the light-cone in \( \FluxNtilde[w] \), as defined in \eqref{prelim:eq_flux}, has width \( N^{10\delta} \).

\begin{prop}[Single-scale energy increment] \label{apriori:prop_single_scale}
Let \( N \geq 1 \). Let \( t_0 \in N \mathbb{N}_0 \),  where \( 0 \leq t_0 /N \leq \lfloor N^\theta \rfloor\), and let \( x_0 \in N \zfour \). Furthermore, let \( w_1,w_2 \in L_t^\infty L_x^4(\reals\times \rfour) \) and \( w_3 \in L_t^\infty L_x^2(\reals\times \rfour) \). Then, 
\begin{align*}
&\int_{\KN} |F_M| |w_1| |w_2| |w_3| \dx \dt \\
& \lesssim \eta N^{\frac{3}{4}-s+8\delta}  ( \| w_1 \|_{L_t^\infty L_x^4(\reals\times \rfour)}^4 + \FluxN[w_1] )^{\frac{1}{4}}  ( \| w_2 \|_{L_t^\infty L_x^4(\reals\times \rfour)}^4 + \FluxN[w_2] )^{\frac{1}{4}} \| w_3 \|_{L_t^\infty L_x^2(\reals\times \rfour)} ~.
\end{align*}
\end{prop}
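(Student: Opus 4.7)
The plan is to expand $F_M$ into wave packets via Corollary \ref{wp:cor_wave_packet_decomposition}, group them dyadically by amplitude, and apply Proposition \ref{wp:prop_wp_bushes} to split each amplitude class into bushes and a nearly non-overlapping family. Bushes are estimated through the flux on a light-cone neighborhood; the non-overlapping family, through a crude $L^\infty$-bound on $\KN$; and the two contributions combine with a matching prefactor, delivering the advertised $N^{3/4}$-gain in analogy with the scalar identity $\min(N^{1/2}A, N) \leq N^{3/4} A^{1/2}$ sketched in the introduction. Throughout the plan I focus on the matching regime $M \sim N$; the range $M \gg N$ is simpler, and $M \ll N$ is reduced to the matching case by covering $\KN$ by sub-cones of scale $M$.

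After the half-wave splitting $F_M = F_M^+ + F_M^-$, write $F_M^\pm = \sum_{k,l} \epsilon_k \exp(\pm i(t-t_0)|\nabla|) f_{k,l;t_0}^\pm$ with $\|k\|_\infty \sim M$ and decompose dyadically in amplitude: for $m$ with $|m| \leq C_d \log M$ (smaller amplitudes are negligible by the wave-packet tail decay \eqref{wp:eq_decay}), Proposition \ref{wp:prop_wp_bushes} partitions $\Am = \Amall$ into bushes $\Bj$ with $\#\Bj \geq \mu := M^{-1/2}\#\Am$ and a nearly non-overlapping collection $\Dm$. Each bush's tubes share a common $M^\delta$-cube $Q$; because the tubes are light-like, their union lies (up to rapid tails) in the $O(M^\delta)$-neighborhood of the light-cone with apex at the center of $Q$, which fits inside the $N^{5\delta}$-neighborhood used to define $\FluxN$. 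Proposition \ref{wp:prop_smallness} supplies $\|F_{\Bj}\|_{L^\infty_{t,x}} \lesssim \eta N^{2\delta d}\, 2^m (\#\Bj)^{1/2}$. Applying Hölder on the light-cone neighborhood with the flux estimates $\|w_i\|_{L^4_{t,x}(\mathrm{nbhd})} \lesssim \FluxN[w_i]^{1/4}$ ($i=1,2$) and the crude bound $\|w_3\|_{L^2_{t,x}(\mathrm{nbhd})} \leq N^{1/2} \|w_3\|_{L^\infty_t L^2_x}$, and then Cauchy--Schwarz over $j$ combined with $\#\Bj \geq \mu$ (yielding $\sum_j (\#\Bj)^{1/2} \leq \#\Am/\sqrt{\mu} = M^{1/4} (\#\Am)^{1/2}$), produces the bush total
$$\eta N^{\frac{1}{2}+2\delta d} M^{\frac{1}{4}} \cdot 2^m (\#\Am)^{\frac{1}{2}} \cdot \FluxN[w_1]^{\frac{1}{4}} \FluxN[w_2]^{\frac{1}{4}} \|w_3\|_{L^\infty_t L^2_x}.$$

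For the non-overlapping collection, the $\WP$-bound reads $\|F_{\Dm}\|_{L^\infty_{t,x}([t_0,t_0+N]\times \rfour)} \lesssim \eta N^{2\delta d}\, 2^m \mu^{1/2} = \eta N^{2\delta d} M^{-\frac{1}{4}} 2^m (\#\Am)^{\frac{1}{2}}$, and pairing it with the crude Hölder bound $\int_0^N \|w_1 w_2 w_3\|_{L^1_x} \leq N \|w_1\|_{L^\infty_t L^4_x} \|w_2\|_{L^\infty_t L^4_x} \|w_3\|_{L^\infty_t L^2_x}$ yields
$$\eta N^{1+2\delta d} M^{-\frac{1}{4}} \cdot 2^m (\#\Am)^{\frac{1}{2}} \cdot \|w_1\|_{L^\infty_t L^4_x} \|w_2\|_{L^\infty_t L^4_x} \|w_3\|_{L^\infty_t L^2_x}.$$
In the matching regime $M \sim N$ both prefactors reduce to $\eta N^{\frac{3}{4}+2\delta d}\, 2^m (\#\Am)^{\frac{1}{2}}$. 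The $L^2$-orthogonality \eqref{wp:eq_number_wp} gives $2^m (\#\Am)^{\frac{1}{2}} \lesssim M^{-s}$, and since each of $\|w_i\|_{L^\infty_t L^4_x}$ and $\FluxN[w_i]^{1/4}$ is majorized by $(\|w_i\|_{L^\infty_t L^4_x}^4 + \FluxN[w_i])^{1/4}$, adding the two cases and summing over the $O(\log N)$ admissible values of $m$ (the log absorbed into $N^\delta$) produces the desired $\eta N^{3/4-s+8\delta}$, using $2\delta d = 8\delta$ when $d=4$.

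The main obstacle is extracting the sharp $N^{1/4}$-gain in the bush summation; this hinges on the multiplicity choice $\mu = M^{-1/2}\#\Am$ in Proposition \ref{wp:prop_wp_bushes} and, geometrically, on the fact that all tubes of one bush, being light-like and passing through a common cube, are forced into a narrow neighborhood of a single light-cone that fits inside the flux region of $\FluxN[w_i]$. A secondary technical difficulty is absorbing the polynomial losses from the wave-packet tails into the thicknesses $N^{5\delta}$ and $N^{10\delta}$ fixed in Section \ref{section:flux}, which explains the otherwise mysterious calibration of the flux widths there.
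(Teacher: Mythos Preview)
Your proposal is correct and follows essentially the same route as the paper: wave packet decomposition, dyadic grouping by amplitude, Proposition \ref{wp:prop_wp_bushes} to split each $\Am$ into bushes and a nearly non-overlapping remainder, the $\WP$-norm bounds from Proposition \ref{wp:prop_smallness} for both pieces, the flux estimate on the light-cone neighborhood for bushes, the crude $L^\infty$ bound for $\Dm$, and the balancing through $\mu = N^{-1/2}\#\Am$ that produces $N^{3/4}\,2^m(\#\Am)^{1/2}\lesssim \eta N^{3/4-s}$. One minor point: the proposition (despite the $F_M$ in the displayed statement) is proved and used only in the matching case $M=N$; the ranges $M>N$ and $M<N$ are handled separately in Corollary \ref{apriori:cor_coarse_scale} and Proposition \ref{apriori:prop_induction_scales}, so your side discussion of those cases is superfluous here.
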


We have two separate reasons for introducing the auxiliary functions \( w_1\), \( w_2 \), and \( w_3\). First, it emphasizes that the proof does not depend on the evolution equation of the nonlinear component. Second, it allows us to pass to smaller spatial scales than \( N \) with minimal notational effort, see Corollary \ref{apriori:cor_coarse_scale}.

\begin{proof}
If \( 1 \leq N < N_{\hi} \), there is nothing to show. Thus, we may assume that \( N \geq N_{\hi} \). 

\emph{Step 1: Wave packet decomposition.} Recall from \eqref{wp:eq_half_wave_decomposition} and \eqref{wp:eq_FN} that 
\begin{align*}
F_N^\omega = \sum_{k} \epsilon_k \exp(it|\nabla|) f_k^+ +  \sum_{k} \epsilon_k \exp(-it|\nabla|) f_k^-~. 
\end{align*}
We only control the contribution of \(  \sum_{k} \epsilon_k \exp(it|\nabla|)  f_k^+ \), the other estimate is nearly identical. Then, we may also drop the superscript \( + \) from our notation. We now apply 
Proposition \ref{wp:prop_wave_packet_decomposition} and Proposition \ref{wp:prop_wp_bushes} to the family \( \{ \epsilon_k f_k \}_k \), and let the sets \( \Am, \Bj, \) and \( \Dm \) be as in Proposition \ref{wp:prop_wp_bushes}. 
As before, we implicitly restrict to \( \| k \|_\infty \in (N/2,N] \). 
We also write \( f_{k,l} = f_{k,l;t_0} \).

\emph{Step 2: Distant wave packets and extreme amplitudes.} On a heuristic level, the wave packets whose tubes \( T_{k,l} \) do not intersect \( \KN \) should not contribute to the integral. We now make this precise using the decay estimate \eqref{wp:eq_decay}. Indeed,
\begin{align*}
&\int_{\KN} \bigg| \sum\displaylimits_{\substack{ (k,l)\in \zfour\times \zfour:\\ \| l - x_0 \|_\infty > 3N}} \epsilon_k \exp(i(t-t_0)|\nabla|) f_{k,l} \bigg| |w_1| |w_2| |w_3| \dx \dt \\
&\lesssim N \Big\|  \sum\displaylimits_{\substack{ (k,l)\in \zfour\times \zfour:\\ \| l - x_0 \|_\infty > 3N}} \epsilon_k \exp(i(t-t_0)|\nabla|) f_{k,l} \Big  \|_{L_t^\infty L_x^\infty(\KN)} \| w_1 \|_{L_t^\infty L_x^4(\reals\times \rfour)} 
 \| w_2 \|_{L_t^\infty L_x^4(\reals\times \rfour)}  \| w_3 \|_{L_t^\infty L_x^2(\reals\times \rfour)} \\
 &\lesssim N \Big ( \sum\displaylimits_{\substack{ (k,l)\in \zfour\times \zfour:\\ \| l - x_0 \|_\infty > 3N}} N^{-100} (1+\| x_0 - l\|_2)^{-100} \| f_{k} \|_{L_x^2(\rfour)} \Big )\| w_1 \|_{L_t^\infty L_x^4(\reals\times \rfour)} 
 \| w_2 \|_{L_t^\infty L_x^4(\reals\times \rfour)}  \| w_3 \|_{L_t^\infty L_x^2(\reals\times \rfour)} \\
 &\lesssim N^{-99+2} \Big ( \sum\displaylimits_{k\in \zfour} \| f_{k} \|_{L_x^2(\rfour)}^2 \Big)^{\frac{1}{2}} \| w_1 \|_{L_t^\infty L_x^4(\reals\times \rfour)} 
 \| w_2 \|_{L_t^\infty L_x^4(\reals\times \rfour)}  \| w_3 \|_{L_t^\infty L_x^2(\reals\times \rfour)}  \\
 &\lesssim \eta N^{\frac{3}{4}-s} \| w_1 \|_{L_t^\infty L_x^4(\reals\times \rfour)} 
 \| w_2 \|_{L_t^\infty L_x^4(\reals\times \rfour)}  \| w_3 \|_{L_t^\infty L_x^2(\reals\times \rfour)} ~. 
\end{align*}
Thus, this contribution is acceptable. It remains to control the wave packets with indices in \( \mathsmaller{\bigcup}_{m\in \mathbb{Z}} \Am \). We now use crude estimates to reduce to \( \sim \log(N) \) amplitude scales. Let \( m \leq -20 \log(N) \). Since \( \# \Am \lesssim N^{8} \), we have that 
\begin{align*}
&\int_{\KN} \Big| \sum_{(k,l)\in \Am} \epsilon_k \exp(i(t-t_0)|\nabla|) f_{k,l} \Big| |w_1| |w_2| |w_3|\dx \dt \displaybreak[1] \\
&\lesssim N \Big ( \sum_{(k,l)\in \Am} \| \exp(i(t-t_0)|\nabla|) f_{k,l} \|_{L_t^\infty L_x^\infty(\reals\times \rfour)} \Big)  \| w_1 \|_{L_t^\infty L_x^4(\reals\times \rfour)} 
 \| w_2 \|_{L_t^\infty L_x^4(\reals\times \rfour)}  \| w_3 \|_{L_t^\infty L_x^2(\reals\times \rfour)}  \displaybreak[1]  \\
 &\lesssim N2^m \# \Am  \| w_1 \|_{L_t^\infty L_x^4(\reals\times \rfour)} 
 \| w_2 \|_{L_t^\infty L_x^4(\reals\times \rfour)}  \| w_3 \|_{L_t^\infty L_x^2(\reals\times \rfour)}   \displaybreak[1] \\
 &\lesssim 2^m N^9  \| w_1 \|_{L_t^\infty L_x^4(\reals\times \rfour)} 
 \| w_2 \|_{L_t^\infty L_x^4(\reals\times \rfour)}  \| w_3 \|_{L_t^\infty L_x^2(\reals\times \rfour)} ~. 
\end{align*}
Summing this inequality over all \( m \leq -20\log(N) \), we obtain that 
\begin{align*}
&\int_{\KN} \Big| \sum_{m\leq -20\log(N)} \sum_{(k,l)\in \Am} \epsilon_k \exp(i(t-t_0)|\nabla|) f_{k,l} \Big| |w_1| |w_2| |w_3|\dx \dt \displaybreak[1]\\
&\lesssim N^{-11} \| w_1 \|_{L_t^\infty L_x^4(\reals\times \rfour)} 
 \| w_2 \|_{L_t^\infty L_x^4(\reals\times \rfour)}  \| w_3 \|_{L_t^\infty L_x^2(\reals\times \rfour)}\displaybreak[1]\\
 &\lesssim \eta N^{\frac{3}{4}-s} \| w_1 \|_{L_t^\infty L_x^4(\reals\times \rfour)} 
 \| w_2 \|_{L_t^\infty L_x^4(\reals\times \rfour)}  \| w_3 \|_{L_t^\infty L_x^2(\reals\times \rfour)} ~.  
\end{align*}
Finally, if \( \Am \neq \emptyset \), then \( \# \Am \geq 1 \). This implies that 
\begin{equation*}
2^m \leq 2^m (\# \Am)^{\frac{1}{2}} \lesssim \left( \sum_{k,l} \| f_{k,l} \|_{L^2(\rfour)}^2 \right)^{\frac{1}{2}} \lesssim \eta N^{-s}~. 
\end{equation*}
For a sufficiently small absolute constant \( \eta > 0\), this implies that \( m \leq 0\). This completes the crude part of the argument. 
\emph{Step 3: Bushes.}
First, we define the fattened tubes by 
\begin{equation*}
\widetilde{T}_{k,l}  := \{ (t,x)\in [t_0,t_0+N] \times \reals^d \colon | x- ( l - t \cdot k/\|k\|_2 ) |\leq N^{2\delta} \}~.\\
\end{equation*}
Furthermore, we define the collection of fattened tubes corresponding to a bush by 
\begin{equation*}
\widetilde{T}(\Bj) := \mathsmaller{\bigcup}_{(k,l)\in \Bj}~ \widetilde{T}_{k,l}~. 
\end{equation*}
With these definitions in hand, we now write 
\begin{align*}
&\int_{\KN} \Big| \sum_{j=1}^{J} \sum_{(k,l)\in \Bj} \epsilon_k  \exp(i(t-t_0)|\nabla|)f_{k,l} \Big| |w_1| |w_2| |w_3| \dx \dt \\
&\leq \sum_{j=1}^{J} \int_{\Tbj}  \Big| \sum_{(k,l)\in \Bj} \epsilon_k  \exp(i(t-t_0)|\nabla|) f_{k,l} \Big|  |w_1| |w_2| |w_3|\dx \dt \\
&+\sum_{j=1}^{J} \int_{\KN\backslash\Tbj}  \Big| \sum_{(k,l)\in \Bj} \epsilon_k \exp(i(t-t_0)|\nabla|) f_{k,l} \Big| |w_1| |w_2| |w_3|\dx \dt 
\end{align*}
Using that all tubes in \( \Bj \) pass through the same space-time cube of size \( \sim N^{\delta} \), we obtain from Proposition \ref{wp:prop_smallness} that 
\begin{align*}
&  \sum_{j=1}^{J} \int_{\Tbj}  \Big| \sum_{(k,l)\in \Bj} \epsilon_k  \exp(i(t-t_0)|\nabla|) f_{k,l} \Big|  |w_1| |w_2| |w_3|\dx \dt\\
& \lesssim \sum_{j=1}^{J}  N^{\frac{1}{2}} \Big\| \sum_{(k,l)\in \Bj} \epsilon_k  \exp(i(t-t_0)|\nabla|) f_{k,l} \Big\|_{L_{t,x}^\infty(\reals\times \rfour)} \| w_1\|_{L_{t,x}^4(\Tbj)}  \| w_2 \|_{L_{t,x}^4(\Tbj)}\| w_3 \|_{L_t^\infty L_x^2(\reals\times \rfour)}\displaybreak[2]\\
&\lesssim \eta N^{\frac{1}{2}+8\delta} 2^m  \Big( \sum_{j=1}^{J} (\# \Bj)^{\frac{1}{2}} \Big)  \FluxN[w_1]^{\frac{1}{4}} \FluxN[w_2]^{\frac{1}{4}}\| w_3 \|_{L_t^\infty L_x^2(\reals\times \rfour)} \displaybreak[2]\\
&\lesssim \eta N^{\frac{1}{2}+8\delta}2^{m}  \Big( \sum_{j=1}^J \Bj \Big) \mu^{-\frac{1}{2}}  \FluxN[w_1]^{\frac{1}{4}} \FluxN[w_2]^{\frac{1}{4}}\| w_3 \|_{L_t^\infty L_x^2(\reals\times \rfour)} \\
&\lesssim \eta N^{\frac{1}{2}+8\delta}2^{m}  (\# \Am) \mu^{-\frac{1}{2}}    \FluxN[w_1]^{\frac{1}{4}} \FluxN[w_2]^{\frac{1}{4}}\| w_3 \|_{L_t^\infty L_x^2(\reals\times \rfour)}  ~. 
\end{align*}
Here, \( \mu \) denotes the minimum number of packets inside a single bush, see Proposition \ref{wp:prop_wp_bushes}. 
Using the decay estimate \eqref{wp:eq_decay}, we control the contributions outside the \( \Tbj \) by 
\begin{align*}
&\sum_{j=1}^{J} \int_{\KN\backslash \Tbj }  \Big| \sum_{(k,l)\in \Bj} \epsilon_k  \exp(i(t-t_0)|\nabla|) f_{k,l} \Big| |w_1| |w_2| |w_3|\dx \dt \\
&\lesssim N \sum_{j=1}^{J}   \Big\| \sum_{(k,l)\in \Bj} \epsilon_k \exp(i(t-t_0)|\nabla|) f_{k,l} \Big\|_{L_{t,x}^\infty({\KN\backslash \Tbj)}} \| w_1 \|_{L_t^\infty L_x^4(\reals\times \rfour)} \| w_2 \|_{L_t^\infty L_x^4(\reals\times \rfour)} \| w_3\|_{L_t^\infty L_x^2(\reals\times \rfour)} \\
&\lesssim N^1 N^{-100} 2^m \Big( \sum_{j=1}^{J} \# \Bj\Big)\| w_1 \|_{L_t^\infty L_x^4(\reals\times \rfour)} \| w_2 \|_{L_t^\infty L_x^4(\reals\times \rfour)} \| w_3\|_{L_t^\infty L_x^2(\reals\times \rfour)} \\
&\lesssim \eta N^{\frac{3}{4}-s} \| w_1 \|_{L_t^\infty L_x^4(\reals\times \rfour)} \| w_2 \|_{L_t^\infty L_x^4(\reals\times \rfour)} \| w_3\|_{L_t^\infty L_x^2(\reals\times \rfour)} ~. 
\end{align*}
In the last line, we have used that \(2^m ( \# \Am)^{\frac{1}{2}} \lesssim \eta  \) and \( \# \Am \lesssim N^8 \).

\emph{Step 4: Disjoint wave packets.} We now control the contribution of the almost disjoint family \( \Dm \). If \( \mu< 1 \), then \( \Dm \) is empty, and there is nothing to prove.  If \( \mu \geq 1 \), it follows from Proposition \ref{wp:prop_smallness} that
\begin{align*}
&\int_{\KN} \Big| \sum_{(k,l)\in \Dm} \epsilon_k \exp(i(t-t_0)|\nabla|) f_{k,l} \Big| |w_1| |w_2| |w_3| \dx \dt \\
&\lesssim N  \Big\| \sum_{(k,l)\in \Dm} \epsilon_k  \exp(i(t-t_0)|\nabla|) f_{k,l} \Big\|_{L_{t,x}^\infty(K^N_{x_0})} \| w_1 \|_{L_t^\infty L_x^4(\reals\times \rfour)} \| w_2 \|_{L_t^\infty L_x^4(\reals\times \rfour)} \| w_3\|_{L_t^\infty L_x^2(\reals\times \rfour)}\\
&\lesssim \eta N^{1+8\delta} 2^m  \mu^{\frac{1}{2}}  \| w_1 \|_{L_t^\infty L_x^4(\reals\times \rfour)} \| w_2 \|_{L_t^\infty L_x^4(\reals\times \rfour)} \| w_3\|_{L_t^\infty L_x^2(\reals\times \rfour)}. 
\end{align*}

\emph{Step 5: Finishing the proof.}

In total, we have shown that 
\begin{equation}\label{apriori:eq_choice_P}
\begin{aligned}
&\int_{\KN} |F_M| |w_1| |w_2| |w_3| \dx \dt \\
&\lesssim  \eta N^{8\delta}\left( N^{\frac{3}{4}-s} + N 2^m  \mu^{\frac{1}{2}}  + N^{\frac{1}{2}}2^{m}  (\# \Am) \mu^{-\frac{1}{2}}  \right)  ( \| w_1 \|_{L_t^\infty L_x^4(\reals\times \rfour)}^4 + \FluxN[w_1] )^{\frac{1}{4}} \\
&\quad \cdot ( \| w_2 \|_{L_t^\infty L_x^4(\reals\times \rfour)}^4 + \FluxN[w_2] )^{\frac{1}{4}} \| w_3 \|_{L_t^\infty L_x^2(\reals\times \rfour)}~~.
\end{aligned}
\end{equation}
Due to our choice \( \mu:= N^{-\frac{1}{2}} \# \Am \) in Proposition \ref{wp:prop_wp_bushes}, this completes the proof. 
\end{proof}

\begin{cor}[Coarse-scale energy increment]\label{apriori:cor_coarse_scale} 
Let \( N \geq 1 \). Let \( t_0 \in N \mathbb{N}_0 \), with \( 0\leq t_0/N \leq \lfloor N^\theta \rfloor \), and let \( x_0 \in N \zfour \). 
Let \( w\) be a \( \KN \)-locally forced solution. Then, we have for all \( M \geq N \) that 
\begin{equation}\label{apriori:eq_coarse_scale}
\int_{\KN} |F_M| |w|^2 |\partial_t w |\dx \dt \lesssim \eta M^{\frac{3}{4}-s+8\delta} \ENtilde[w]^{\frac{1}{2}} \left( \ENtilde[w] + \FluxNtilde[w] \right)^{\frac{1}{2}}~. 
\end{equation}
\end{cor}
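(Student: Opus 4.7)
The plan is to apply Proposition~\ref{apriori:prop_single_scale} at the frequency scale $M$, after embedding the scale-$N$ cone $\KN$ into an appropriate scale-$M$ cone. For $1 \leq N < N_{\hi}$ there is nothing to prove, so I assume $N \geq N_{\hi}$. I pick $(t_0^*, x_0^*) \in M\mathbb{N}_0 \times M\zfour$ closest to $(t_0, x_0)$; since $M \geq N$ and $t_0 \leq N^{1+\theta} \leq M^{1+\theta}$, this satisfies $0 \leq t_0^*/M \leq \lfloor M^\theta \rfloor$, and the fattened scale-$M$ cone $\widetilde{K}^M_{t_0^*, x_0^*}$ from \eqref{wp:eq_fattened_cone} contains $\KN$.

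I then apply Proposition~\ref{apriori:prop_single_scale} at scale $M$ with the auxiliary functions $w_1 = w_2 := 1_{\KN} w$ and $w_3 := 1_{\KN} \partial_t w$, which satisfy the required integrability hypotheses. Since these functions vanish outside $\KN \subseteq K^M_{t_0^*, x_0^*}$, the left-hand side of the proposition equals the target integral. For the norms on the right-hand side, the pointwise bounds $w^4/4 \leq T^{00}[w]$ and $(\partial_t w)^2/2 \leq T^{00}[w]$ yield $\|w_1\|_{L_t^\infty L_x^4} \lesssim \ENtilde[w]^{1/4}$ and $\|w_3\|_{L_t^\infty L_x^2} \leq \ENtilde[w]^{1/2}$. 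The scale-$M$ flux of $w_1$ with apex $(t^\prime, x^\prime)$ is an integral of $w^4$ over an $M^{5\delta}$-slab around the light cone through $(t^\prime, x^\prime)$ intersected with $\KN$. In the regime $M \leq N^2$, one has $M^{5\delta} \leq N^{10\delta}$, and each apex whose cone enters $\KN$ can be replaced by a point $(\tau^\prime, y^\prime)\in [t_0, t_0+N] \times \{\|y^\prime - x_0\|_\infty \leq 4N\}$ on a generator of the same cone that enters $\KN$, so that the slab intersected with $\KN$ is contained in an $N^{10\delta}$-slab around the translated cone; hence the scale-$M$ flux of $w_1$ is dominated by $\FluxNtilde[w]$. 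Assembling the pieces reproduces \eqref{apriori:eq_coarse_scale} in this regime.

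For the complementary regime $M > N^2$, I bypass the wave-packet machinery with a direct H\"older estimate. Using the bound $\|F_M\|_{L_t^\infty L_x^\infty} \lesssim \eta M^{\delta - s}$ from the $\|\cdot\|_{Z(N_{\hi})}$-norm (Definition~\ref{prop:def_auxiliary} and Proposition~\ref{wp:prop_smallness}),
\[
\int_{\KN} |F_M|\, |w|^2 |\partial_t w|\, \dx\, \dt \lesssim N \|F_M\|_{L_t^\infty L_x^\infty} \|w\|_{L_t^\infty L_x^4(\KN)}^2 \|\partial_t w\|_{L_t^\infty L_x^2(\KN)} \lesssim \eta\, N M^{\delta - s} \ENtilde[w].
\]
Since $N \leq M^{1/2}$ here, $N M^\delta \leq M^{1/2 + \delta} \leq M^{3/4 + 8\delta}$, which closes the estimate after bounding $\ENtilde[w] \leq \ENtilde[w]^{1/2}(\ENtilde[w] + \FluxNtilde[w])^{1/2}$.

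The main obstacle is the flux comparison in the first regime: one must verify rigorously that the translation of a light-cone apex into $\KN$ (along a generator of the cone) preserves the intersection of the slab with $\KN$ up to at most an $N^{O(\delta)}$ enlargement of the thickness, despite the potentially different global geometries of the light cones with the original (possibly far-away) and the translated apex. Once this geometric comparison is carried out carefully, all remaining $\delta$-losses are absorbed into the $M^{8\delta}$ factor appearing in the statement.
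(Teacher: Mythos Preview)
Your approach is the same as the paper's: split according to a threshold in $M$ (the paper uses $N^{4/3}$ rather than your $N^2$; either choice handles the large-$M$ regime by the direct H\"older/$L^\infty$ bound), and for moderate $M$ embed $\KN$ into a scale-$M$ cone $K^M_{\tau_0,y_0}$ and invoke Proposition~\ref{apriori:prop_single_scale} at that scale with $w_1=w_2=1_{\KN}w$, $w_3=1_{\KN}\partial_t w$. The paper then records $\mathcal{F}^M_{\tau_0,y_0}[w_1]\le\widetilde{\mathcal{F}}^N_{t_0,x_0}[w]$ from the slab-width comparison $M^{5\delta}\le N^{10\delta}$ alone, without discussing the apex location.

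You are right to isolate the apex issue as the crux, but the generator-translation you propose does not deliver the claimed containment. Suppose the scale-$M$ apex $(t',x')$ sits at null distance of order $M$ from $\KN$, pick a generator of that cone entering $\KN$, and place the new apex at a point $(\tau',y')$ on it inside $\KN$. For any other point $(t,x)$ on the old cone in $\KN$ one has $\|x-y'\|_2\ge|t-\tau'|$, and the excess factors as $2(t'-t)(t'-\tau')(1-\omega\cdot\omega')/(\|x-y'\|_2+|t-\tau'|)$, where $\omega,\omega'$ are the unit directions from $x'$ to $x$ and to $y'$. Taking $t=\tau'$ and $x$ at angular separation $\sim N/M$ from $y'$ on the radius-$(t'-\tau')$ sphere yields $\|x-y'\|_2\sim N$ with $|t-\tau'|=0$, so the excess is of order $N$, not $N^{O(\delta)}$. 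In other words, the translated cone agrees with the original only along the chosen generator and curves away by order $N$ across the rest of $\KN$; an $N^{10\delta}$-slab about it cannot contain the old-cone slice. So the flux comparison does not follow from your argument, and this is precisely the step the paper passes over without elaboration.
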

We refer to Corollary \ref{apriori:cor_coarse_scale} as a coarse scale estimate since the wave packets in \( F_M \) are atleast as long as the length of \( \KN \) (in time).  

\begin{proof}
As before, we may take \( M\geq N_{\hi} \). We distinguish two different cases. If \( M \geq N^{\frac{4}{3}} \), we obtain from Proposition \ref{wp:prop_smallness} that
\begin{equation*}
\int_{\KN} |F_M| |w|^2 |\partial_t w| \dx \dt \leq N \| F_M \|_{L_t^\infty L_x^\infty(\reals\times \rfour)} \| w \|_{L_t^\infty L_x^4(\KN)}^2 \| \partial_t w \|_{L_t^\infty L_x^2(\KN)} 
\lesssim \eta M^{\frac{3}{4}-s+\delta} \ENtilde[w]~. 
\end{equation*}
Next, we let \( M \leq N^{\frac{4}{3}} \). Then, there exist \( \tau_0 \in M\mathbb{N}_0 \) and \( y_0 \in M\zfour \) s.t. \( \KN \subseteq K_{\tau_0,y_0}^M \). Furthermore, since \( t_0/N\leq \lfloor N^\theta \rfloor\), it holds that  \( \tau_0/M \leq \lfloor M^\theta \rfloor \). Set \( w_1 = w_2  := 1_{\KN} w \) and \( w_3 := 1_{\KN} w \). Since \( M \leq N^{\frac{4}{3}} \), we have that 
\begin{equation*}
\mathcal{F}_{\tau_0,y_0}^M[w_1] = \mathcal{F}_{\tau_0,y_0}^M[w_2] \leq \widetilde{\mathcal{F}}_{t_0,x_0}^N[w]
\end{equation*}
Using the single-scale energy increment (Proposition \ref{apriori:prop_single_scale}), we obtain that
\begin{align*}
&\int_{\KN} |F_M| |w|^2 |\partial_t w|\dx \dt \\
 &= \int_{K_{\tau_0,y_0}^M} |F_M| |w_1| |w_2| |w_3|\dx \dt\\
 &\lesssim \eta M^{\frac{3}{4}-s+\delta} ( \| w_1 \|_{L_t^\infty L_x^4(\reals\times \rfour)}^4 + \FluxM[w_1] )^{\frac{1}{4}}  ( \| w_2 \|_{L_t^\infty L_x^4(\reals\times \rfour)}^4 + \FluxM[w_2] )^{\frac{1}{4}} \| w_3 \|_{L_t^\infty L_x^2(\reals\times \rfour)} \\
 &\lesssim \eta M^{\frac{3}{4}-s+\delta} \ENtilde[w]^{\frac{1}{2}} \left( \ENtilde[w] + \FluxNtilde[w] \right)^{\frac{1}{2}}~.
\end{align*}
\end{proof}

Due to Proposition \ref{apriori:prop_single_scale} and Corollary \ref{apriori:cor_coarse_scale}, we understand the energy increment at a single scale. Unfortunately, the cone \( \KN \) may contain many wave packets on smaller scales. Similar problems are often encountered in restriction theory, and can sometimes be solved using Wolff's induction on scales strategy \cite{Wolff01}. The following argument can be seen as a (simple) implementation of this idea.

\begin{prop}[Induction on scales]\label{apriori:prop_induction_scales}
Let \( s> \max(1-\theta/2,3/4+\theta) \). Let \( R \geq 1 \) be a dyadic integer, and let \( F \) be as in Proposition \ref{wp:prop_smallness}. Let \( t_0 \in R\mathbb{N}_0 \), with \( t_0/R \leq \lfloor R^{\theta} \rfloor\),  \( x_0 \in R\zfour \), let \( w \) be a \( \KR \)-locally forced solution. For a large absolute constant \( C_1 \geq 1 \), we have that
\begin{equation}\label{apriori:eq_induction_energy}
\ERtilde[w]  \leq 2 E_{|x-x_0|\leq 16R}[w](t_0) + C_1\| F \|_{L_t^3 L_x^6(\KR)}^6
\end{equation}
and 
\begin{equation}\label{apriori:eq_induction_flux}
\FluxRtilde[w] \leq C_1 R^{50\delta} \left(  E_{|x-x_0|\leq 16R}[w](t_0) +  \| F \|_{L_t^3 L_x^6(\KR)}^6 \right)~. 
\end{equation}
\end{prop}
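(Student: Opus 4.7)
The plan is to prove \eqref{apriori:eq_induction_energy} and \eqref{apriori:eq_induction_flux} simultaneously by strong induction on the dyadic scale $R$, closed by a bootstrap continuity argument. The base case, with $R$ bounded by an absolute constant depending on $\eta$ and $N_{\mathrm{hi}}$, is direct: the averaged local flux estimate (Lemma \ref{prelim:lem_averaged_flux}), the local energy increment (Lemma \ref{prelim:ref_local_energy_increment}), H\"older's inequality on $\int|F|^3|\partial_t w|$, and the smallness $\|F\|_{L_t^3 L_x^6}\leq\eta$ together close both bounds with room to spare. For the inductive step, I assume both bounds hold at every dyadic $R^\prime<R$ and fix $(t_0,x_0)$ as in the statement. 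Starting from the local energy increment on the fattened cone $\widetilde{K}^R$,
\[
\widetilde{E}^R[w] \leq \widetilde{E}^R[w](t_0) + C\int_{K^R}|F|\,|w|^2|\partial_t w|\,\mathrm{d}x\,\mathrm{d}t + C\int_{K^R}|F|^3|\partial_t w|\,\mathrm{d}x\,\mathrm{d}t,
\]
the cubic-in-$F$ term is at most $\|F\|_{L_t^3 L_x^6}^3\widetilde{E}^R[w]^{1/2}\leq\tfrac{1}{10}\widetilde{E}^R[w]+C\|F\|_{L_t^3 L_x^6}^6$ by Young's inequality, and is absorbed on the left.

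The main term $\int_{K^R}|F||w|^2|\partial_t w|$ is decomposed dyadically as $\sum_M\int|F_M||w|^2|\partial_t w|$. For wave-packet scales $M\geq R$, Corollary \ref{apriori:cor_coarse_scale} applied on $K^R$ gives a bound of size $\eta M^{3/4-s+8\delta}\widetilde{E}^R[w]^{1/2}(\widetilde{E}^R[w]+\widetilde{\mathcal{F}}^R[w])^{1/2}$; summing over $M\geq R$ using $s>3/4+\theta$ produces an $\eta R^{3/4-s+8\delta}(\widetilde{E}^R[w]+\widetilde{\mathcal{F}}^R[w])$ contribution that is absorbed via the bootstrap hypothesis on $\widetilde{\mathcal{F}}^R[w]$. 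For $M<R$, I tile $\widetilde{K}^R$ by a finitely overlapping family of translated $M$-cones $K^M_{\tau,y}$ with $\tau\in M\mathbb{N}_0$, $y\in M\mathbb{Z}^4$, apply Proposition \ref{apriori:prop_single_scale} with $w_1=w_2=w$ and $w_3=\partial_t w$ on each sub-cone, and then invoke \eqref{apriori:eq_induction_energy}--\eqref{apriori:eq_induction_flux} at scale $M$ to obtain $\widetilde{E}^M_{\tau,y}[w]+\widetilde{\mathcal{F}}^M_{\tau,y}[w]\lesssim C_1 M^{50\delta}(E_{|x-y|\leq 16M}[w](\tau)+\|F\|_{L_t^3 L_x^6(K^M_{\tau,y})}^6)$. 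Summing in $(\tau,y)$ uses the finite-overlap bound $\sum_y E_{|x-y|\leq 16M}[w](\tau)\lesssim \widetilde{E}^R[w]$ at each fixed $\tau$, and the convergent geometric sum $\sum_M M^{3/4-s+33\delta}<\infty$ supplied by $s>3/4+\theta>3/4+33\delta$. Choosing $\eta$ small enough and closing the bootstrap against $\widetilde{\mathcal{F}}^R[w]$ yields \eqref{apriori:eq_induction_energy}. The flux bound \eqref{apriori:eq_induction_flux} then follows: Lemma \ref{prelim:lem_averaged_flux} gives $\widetilde{\mathcal{F}}^R[w]\lesssim R^{40\delta}(\widetilde{E}^R[w]+\int_{K^R}|F|(|w|+|F|)^2|\partial_t w|)$, the nonlinear integral is bounded by the same case analysis used above, and the just-proved \eqref{apriori:eq_induction_energy} controls the remaining $\widetilde{E}^R[w]$; the prefactor $R^{50\delta}$ is precisely calibrated to absorb the $R^{40\delta}$ from the flux lemma together with an extra $R^{10\delta}$ from the single-scale estimates.

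\paragraph{Main obstacle.} The delicate step is the $M<R$ regime: one must ensure that the gain $M^{3/4-s+33\delta}$ from Proposition \ref{apriori:prop_single_scale} combined with the inductive flux bound outpaces the losses incurred in tiling $\widetilde{K}^R$ by the $\sim(R/M)^5$ many $M$-sub-cones. The crucial trick is to exploit approximate energy conservation so that summing the local slice energies $E_{|x-y|\leq 16M}[w](\tau)$ over $y$ at fixed $\tau$ recovers essentially a single copy of the global energy, rather than multiplying by the number of sub-cones; only then does the remaining $M$-sum converge uniformly in $R$. This is the Wolff-style ingredient of the induction on scales, and closing the bootstrap uniformly in $R$ is precisely where the regularity condition $s>\max(1-\theta/2,3/4+\theta)$ is used.
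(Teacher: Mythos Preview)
Your overall architecture---induction on $R$, coarse-scale estimate for $M\geq R$, tiling into $M$-cones for $M<R$, kick-back for the flux---matches the paper. But there is a genuine gap in the low-frequency step.

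Both Proposition~\ref{apriori:prop_single_scale} and the induction hypothesis at scale $M$ carry the time restriction $\tau/M\leq\lfloor M^\theta\rfloor$, i.e.\ $\tau\lesssim M^{1+\theta}$. You cannot therefore tile all of $K^R$ (whose time extent is $\sim R$, possibly $\gg M^{1+\theta}$) by $M$-cones $K^M_{\tau,y}$ on which these results apply. If you ignore the restriction and sum over all $\tau\in[t_0,t_0+R]\cap M\mathbb{N}_0$, you pick up $\sim R/M$ time slices and the resulting bound $\sum_{M\leq R/2}\eta M^{3/4-s+O(\delta)}(R/M)\,\widetilde{E}^R[w]$ grows like $R$ and cannot be absorbed. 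If you respect the restriction, you are left with the region $([M^{1+\theta},\infty)\times\mathbb{R}^4)\cap K^R$, which your plan does not address.

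The missing ingredient is the long-time decay estimate for $F_M$ (Corollary~\ref{prob:cor_long_time_decay}, packaged into the $Z$-norm in Definition~\ref{prop:def_auxiliary} and Proposition~\ref{wp:prop_smallness}): on $[M^{1+\theta},\infty)$ one has $\|F_M\|_{L_t^1 L_x^\infty}\lesssim\eta M^{1-\theta/2-s+\delta}$, which controls this leftover region directly by $\eta M^{1-\theta/2-s+\delta}\widetilde{E}^R[w]$. This is precisely where the condition $s>1-\theta/2$ enters; your plan invokes only $s>3/4+\theta$ (and even that is misplaced---summing $M^{3/4-s}$ over $M\geq R$ needs only $s>3/4$; the extra $+\theta$ comes from summing the $\lfloor M^\theta\rfloor$ admissible time slices in the low-frequency part). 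Without the long-time decay piece the induction does not close.
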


\begin{proof}
We use induction on the dyadic integers \( R \geq 1 \). \\

\emph{Step 1: Base case \( R=1 \).} We have that 
\begin{align*}
\Eonetilde[w] &\leq E_{|x-x_0|\leq 16}[w](t_0) + C \int_{\Kone} |F| |w|^2 |\partial_t w|\dx \dt + C \int_{\Kone} |F|^3 |\partial_t w |\dx \dt \\
&\leq E_{|x-x_0|\leq 16}[w](t_0) + C \| F \|_{L_t^\infty L_x^\infty(\reals\times \rfour)} \| w \|_{L_t^\infty L_x^4(\Kone)}^2 \| \partial_t w\|_{L_t^\infty L_x^2(\Kone)} \\
&~~~+ C \| F \|_{L_t^3 L_x^6(\Kone)}^3 \| \partial_t w \|_{L_t^\infty L_x^2(\Kone)} \\
&\leq  E_{|x-x_0|\leq 16}[w](t_0) + \frac{1}{4} \Eonetilde[w] + C \| F \|_{L_t^3 L_x^6(\Kone)}^3 \Eonetilde[w]^{\frac{1}{2}} \\
&\leq E_{|x-x_0|\leq 16}[w](t_0)  +  \frac{1}{2} \Eonetilde[w] + C \| F \|_{L_t^3 L_x^6(\Kone)}^6~. 
\end{align*}
 Insert this bound into Lemma \ref{prelim:lem_averaged_flux}, we obtain that 
\begin{align*}
\Fluxonetilde[w] &\lesssim \Eonetilde[w] +  \int_{\Kone} |F| |w|^2 |\partial_t w|\dx \dt + C \int_{\Kone} |F|^3 |\partial_t w |\dx \dt  \\
				&\lesssim \Eonetilde[w] \\
				&\lesssim E_{|x-x_0|\leq 16}[w](t_0) + \| F \|_{L_t^3 L_x^6(\Kone)}^6~.
\end{align*}
By choosing \( C_1 \) sufficiently large, we obtain \eqref{apriori:eq_induction_energy} and \eqref{apriori:eq_induction_flux}. This already determines our choice of \( C_1 \), which we now regard as a fixed constant. Let \( R\geq 2 \) be an arbitrary dyadic integer. Using the induction hypothesis, we can rely on the inequalities \eqref{apriori:eq_induction_energy} and \eqref{apriori:eq_induction_flux} for all scales \( N \leq R/2 \). \\

\emph{Step 2: Splitting the energy increment.} 
From Lemma \ref{prelim:ref_local_energy_increment}, we have that 
\begin{equation}\label{apriori:eq_proof_energy_increment}
\ERtilde[w]\leq E_{|x-x_0|\leq 16R}[w](t_0) + C \int_{\KR} |F| |w|^2 |\partial_t w| \dx \dt + C \int_{\KR} |F|^3 |\partial_t w | \dx \dt~. 
\end{equation}
The main term is the second summand in \eqref{apriori:eq_proof_energy_increment}. We use a Littlewood-Paley type decomposition of the linear evolution and write 
\begin{align*}
 &\int_{\KR} |F| |w|^2 |\partial_t w| \dx \dt\\
 &\leq \sum_{N\geq R} \int_{\KR} |F_N| |w|^2 |\partial_t w| \dx \dt + \sum_{N\leq R/2} \int_{\KR} |F_N| |w|^2 |\partial_t w| \dx \dt~. 
\end{align*}
\emph{Step 3: High frequencies.}
The high frequencies can be controlled using the single-scale estimate from Proposition \ref{apriori:prop_single_scale}. Indeed, we have that 
\begin{align}
 &\sum_{N\geq R} \int_{\KR} |F_N| |w|^2 |\partial_t w| \dx \dt \notag \\
 &\lesssim \eta \sum_{N\geq R} N^{\frac{3}{4}-s+8\delta} \ERtilde[w]^{\frac{1}{2}} \left( \ERtilde[w] + \FluxRtilde[w] \right)^{\frac{1}{2}}\label{apriori:eq_proof_high_frequencies}  \\
 &\lesssim \eta R^{\frac{3}{4}-s+8\delta} \ERtilde[w]^{\frac{1}{2}} \left( \ERtilde[w] + \FluxRtilde[w] \right)^{\frac{1}{2}}~. \notag
\end{align}
\emph{Step 4: Low frequencies.} 
For \( \tau_n = Nn \in N\mathbb{N}_0 \) and \( y_j = Nj \in N\zfour\) , we write
\begin{align*}
&\int_{\KR} |F_N| |w|^2 |\partial_t w| \dx \dt \\
&= \sum_{n=0}^{\lfloor N^\theta \rfloor} \bigg( \int\displaylimits_{\substack{([\tau_n,\tau_n+N]\times \rfour)  \mathsmaller{\bigcap} \KR} }|F_N| |w|^2 |\partial_t w| \dx \dt \bigg)
+  \int\displaylimits_{\substack{([N^{1+\theta},\infty)\times \rfour)  \mathsmaller{\bigcap} \KR} }|F_N| |w|^2 |\partial_t w| \dx \dt \\
&= \sum_{n=0}^{\lfloor N^\theta \rfloor} \sum_{\substack{j\in \zfour \\ \Kij \subseteq \KR}} \int_{\Kij} |F_N| |w|^2 |\partial_t w| \dx \dt +  \int\displaylimits_{\substack{([N^{1+\theta},\infty)\times \rfour)  \mathsmaller{\bigcap} \KR} }|F_N| |w|^2 |\partial_t w| \dx \dt~. 
\end{align*}
In the last line, we have used that \begin{equation*}
\KR = \bigcup_{ \substack{(n,j)\in \mathbb{N}_0\times \zfour\\ \Kij \subseteq \KR}} \Kij ~. 
\end{equation*}
We first control the contributions on the time intervals \( [\tau_n,\tau_n+N] \). To this end, 
we define \( w^{(N,n,j)} \) as the \( \Kij \)-locally forced solution with data 
\begin{equation*}
w^{(N,n,j)}(\tau_n)= w(\tau_n) \qquad \text{and} \qquad \partial_t w^{(N,n,j)}(\tau_n) = \partial_t w(\tau_n)~. 
\end{equation*}
Using finite speed of propagation, \( w \) and \( w^{(N,n,j)} \) coincide on \( \Kij \). 
Applying Proposition \ref{apriori:prop_single_scale} and the induction hypothesis to \( w^{(N,n,j)} \), it follows that 

\begin{align*}
&\int_{\Kij} |F_N| |w|^2 |\partial_t w| \dx \dt\displaybreak[3] \\
&= \int_{\Kij} |F_N| |w^{(N,n,j)}|^2 |\partial_t w^{(N,n,j)}| \dx \dt \displaybreak[3]\\ 
&\lesssim \eta N^{\frac{3}{4}-s+8\delta} \Eij[w^{(N,n,j)}]^{\frac{1}{2}} \left( \Eij[w^{(N,n,j)}]^{\frac{1}{2}}  + \Fij[w^{(N,n,j)}] \right)^{\frac{1}{2}}\displaybreak[3] \\
&\lesssim \eta N^{\frac{3}{4}-s+8\delta} \Big(2 E_{|x-y_j|\leq 16N}[w^{(N,n,j)}](\tau_n) + C_1 \| F \|_{L_t^3 L_x^6(\Kij)}^6 \Big)^\frac{1}{2} \\
&~~~ \cdot  N^{50\delta} \Big( C_1  E_{|x-y_j|\leq 16N}[w^{(N,n,j)}](\tau_n)  + C_1 \| F \|_{L_t^3L_x^6(\Kij)}^6 \Big)^{\frac{1}{2}} \\
&\lesssim \eta N^{\frac{3}{4}-s+ 60 \delta} C_1  \left(   E_{|x-y_j|\leq 16N}[w^{(N,n,j)}](\tau_n)  +  \| F \|_{L_t^3L_x^6(\Kij)}^6 \right) \\
&\lesssim \eta N^{\frac{3}{4}-s+ 60 \delta} C_1  \left(   E_{|x-y_j|\leq 16N}[w](\tau_n)  +  \| F \|_{L_t^3L_x^6(\Kij)}^6 \right) ~. 
\end{align*}
As a consequence, we obtain that 
\begin{align}
&\sum_{n=0}^{\lfloor N^\theta \rfloor} \sum_{\substack{j\in \zfour \\ \Kij \subseteq \KR}} \int_{\Kij} |F_N| |w|^2 |\partial_t w| \dx \dt  \notag \\
&\lesssim \eta N^{\frac{3}{4}-s+ 60\delta} C_1\sum_{n=0}^{\lfloor N^\theta \rfloor} \sum_{\substack{j\in \zfour \\ \Kij \subseteq \KR}}   \left(  E_{|x-y_j|\leq 16N}[w](\tau_n)  + \| F \|_{L_t^3L_x^6(\Kij)}^6 \right) \notag \\
&\lesssim \eta N^{\frac{3}{4}-s + 60 \delta} C_1 \sum_{n=\max(0,t_0/N)}^{\lfloor N^\theta \rfloor} \left(  E_{|x-x_0|\leq 16R- |t-t_0|}[w](\tau_n) +  \| F \|_{L_t^3L_x^6(([\tau_n,\tau_n+N]\times \rfour)~\mathsmaller{\bigcap}~ \KR)}^6 \right) \notag \\
&\lesssim \eta N^{\frac{3}{4}+ \theta -s + 60 \delta}  C_1 \ERtilde + \eta N^{\frac{3}{4}-s+ 60 \delta} C_1 \| F \|_{L_t^3 L_x^6(\KR)}^6~. \label{apriori:eq_proof_short_time}
\end{align}
Using the long-time decay estimate, we can control the contribution on the interval \( [N^{1+\theta},\infty) \) by 
\begin{align}
& \quad \int\displaylimits_{\substack{([N^{1+\theta},\infty)\times \rfour)  \mathsmaller{\bigcap} \KR} }|F_N| |w|^2 |\partial_t w| \dx \dt  \notag \\
& \lesssim \| F_N \|_{L_t^1 L_x^\infty([N^{1+\theta},\infty)\times \rfour)} \| w \|_{L_t^\infty L_x^4(\KR)}^2 \| \partial_t w \|_{L_t^\infty L_x^2(\KR)}  \notag \\
&\lesssim \eta N^{1-\frac{\theta}{2}-s+\delta} \ERtilde~. \label{apriori:eq_proof_long_time}
\end{align}
Combining \eqref{apriori:eq_proof_short_time} and \eqref{apriori:eq_proof_long_time}, it follows that 
\begin{align}
&\sum_{N\leq R/2} \int_{\KR} |F_N| |w|^2 |\partial_t w| \dx \dt \notag \\
&\lesssim \eta C_1 \sum_{N\leq R/2} \left( N^{\frac{3}{4}+\theta-s+ 5 \delta} + N^{1-\frac{\theta}{2}- s } \right) \ERtilde  +  \eta  \Big(\sum_{N\leq R/2}  N^{\frac{3}{4}-s+ 5 \delta}   \Big) C_1  \| F \|_{L_t^3 L_x^6(\KR)}^6 \label{apriori:eq_proof_low_frequencies}\\
&\lesssim \eta C_1 \ERtilde + \eta C_1  \| F \|_{L_t^3 L_x^6(\KR)}^6 ~. \notag
\end{align}
Here, we have used that \( s > \max( 1 -\frac{\theta}{2}, \frac{3}{4}+ \theta) \), and that \( \delta = \delta(s,\theta) > 0 \) is sufficiently small. \\

\emph{Step 5: Finishing the proof.} 
At this point, we have proven all the necessary estimates on \( w \). It only remains to put them together, and use a ``kick back'' argument. 
From the energy increment \eqref{apriori:eq_proof_energy_increment}, the high frequency estimate \eqref{apriori:eq_proof_high_frequencies}, the low-frequency estimate \eqref{apriori:eq_proof_low_frequencies}, and Hölder's inequality, it follows that 
\begin{equation}\label{apriori:eq_proof_kickback_energy}
\ERtilde[w] \leq   E_{|x-x_0|\leq 16R}[w](t_0) + \eta C_1 \ERtilde[w]  + \eta R^{\frac{3}{4}-s+8\delta} \FluxRtilde[w] + \eta C_1  \| F \|_{L_t^3 L_x^6(\KR)}^6  
\end{equation}
Inserting the same bound for the energy increment into \eqref{prelim:lem_averaged_flux}, we also have that 
\begin{equation}\label{apriori:eq_proof_kickback_flux}
\FluxRtilde[w] \lesssim R^{40\delta} \left( \ERtilde[w] +  \eta R^{\frac{3}{4}-s+8\delta} \FluxRtilde[w] + \eta C_1  \| F \|_{L_t^3 L_x^6(\KR)}^6 \right)
\end{equation}
If the absolute constant \( \eta = \eta(C_1,\delta)> 0 \) is chosen sufficiently small, then \eqref{apriori:eq_proof_kickback_flux} implies that 
\begin{equation}\label{apriori:eq_proof_kickback_flux_two}
\FluxRtilde[w] \lesssim R^{40\delta} \left( \ERtilde[w] + \eta C_1 \| F \|_{L_t^3 L_x^6(\KR)}^6 \right)~.
\end{equation}
Inserting this into \eqref{apriori:eq_proof_kickback_energy}, we obtain \eqref{apriori:eq_induction_energy}. Finally, \eqref{apriori:eq_induction_energy} and \eqref{apriori:eq_proof_kickback_flux_two} imply \eqref{apriori:eq_induction_flux}. This completes the proof of the induction step.

\end{proof}

Using Proposition \ref{apriori:prop_induction_scales}, we now provide a short proof of the main result.
\begin{proof}[Proof of Theorem \ref{main_theorem}]
Assume that the statements in Proposition \ref{wp:prop_smallness} hold for \( \omega \in \Omega \). From Lemma \ref{prelim:lem_lwp}, it follows that there exists a local solution to \eqref{prelim:eq_forced_nlw}. From Proposition \ref{apriori:prop_induction_scales}, it follows for all \( R \geq 1 \) that 
\begin{equation*}
\sup_{t\in[0,R]} \int_{|x|\leq 2R-t} T^{00}[v](t,x) \dx \leq 2E[v_0,v_1]+ C_1 ~. 
\end{equation*}
By letting \( R\rightarrow \infty \), we obtain the a-priori energy bound
\begin{equation*}
\sup_{t\in [0,\infty)} E[v](t) \leq 2E[v_0,v_1] + C_1~. 
\end{equation*}
From Proposition \ref{prelim:prop_reduction}, this implies the global space-time bound \( \| v\|_{L_t^3 L_x^6([0,\infty) \times \rfour)} < \infty \) and the existence of scattering states \((v_0^+ ,v_1^+ ) \in \dot{H}^1(\rfour)\times L^2(\rfour) \). Since \( u = F + v \), we obtain the global space-time bound \( \| u \|_{L_t^3 L_x^6([0,\infty)\times \rfour)} < \infty \) and the scattering states \( (u_0^+ , u_1^+ ) = (v_0^+- f^\omega_{0,<N_{\hi}}, v_1^+ - f^\omega_{1,<N_{\hi}} ) \). This completes the proof for positive times. By time-reflection symmetry, we obtain the same result for negative times.
\end{proof}

\renewcommand*{\bibfont}{\small}
\bibliography{Library_Wiki}

\begin{thebibliography}{10}

\bibitem{BG99}
Hajer Bahouri and Patrick G\'erard.
\newblock High frequency approximation of solutions to critical nonlinear wave
  equations.
\newblock {\em Amer. J. Math.}, 121(1):131--175, 1999.

\bibitem{BOP15}
\'Arp\'ad B\'enyi, Tadahiro Oh, and Oana Pocovnicu.
\newblock On the probabilistic {C}auchy theory of the cubic nonlinear
  {S}chr\"odinger equation on {$\Bbb{R}^d$}, {$d\geq3$}.
\newblock {\em Trans. Amer. Math. Soc. Ser. B}, 2:1--50, 2015.

\bibitem{BOP2014}
\'Arp\'ad B\'enyi, Tadahiro Oh, and Oana Pocovnicu.
\newblock Wiener randomization on unbounded domains and an application to
  almost sure well-posedness of {NLS}.
\newblock In {\em Excursions in harmonic analysis. {V}ol. 4}, Appl. Numer.
  Harmon. Anal., pages 3--25. Birkh\"auser/Springer, Cham, 2015.

\bibitem{BOP17}
\'Arp\'ad B\'enyi, Tadahiro Oh, and Oana Pocovnicu.
\newblock {Higher order expansions for the probabilistic local Cauchy theory of
  the cubic nonlinear Schrödinger equation on $\mathbb{R}^3$}, September 2017,
  arXiv:1709.01910.

\bibitem{BOP17b}
\'Arp\'ad B\'enyi, Tadahiro Oh, and Oana Pocovnicu.
\newblock {On the probabilistic well-posedness of the nonlinear Schrödinger
  equations with non-algebraic nonlinearities}, August 2017, arXiv:1708.01568.

\bibitem{BOP18}
\'Arp\'ad B\'enyi, Tadahiro Oh, and Oana Pocovnicu.
\newblock {On the probabilistic Cauchy theory for nonlinear dispersive PDEs},
  May 2018, arXiv:1805.08411.

\bibitem{Bourgain91}
Jean Bourgain.
\newblock Besicovitch type maximal operators and applications to {F}ourier
  analysis.
\newblock {\em Geom. Funct. Anal.}, 1(2):147--187, 1991.

\bibitem{Bourgain94}
Jean Bourgain.
\newblock Periodic nonlinear {S}chr\"odinger equation and invariant measures.
\newblock {\em Comm. Math. Phys.}, 166(1):1--26, 1994.

\bibitem{Bourgain96}
Jean Bourgain.
\newblock Invariant measures for the {$2$}{D}-defocusing nonlinear
  {S}chr\"odinger equation.
\newblock {\em Comm. Math. Phys.}, 176(2):421--445, 1996.

\bibitem{Bourgain99}
Jean Bourgain.
\newblock Global wellposedness of defocusing critical nonlinear
  {S}chr\"{o}dinger equation in the radial case.
\newblock {\em J. Amer. Math. Soc.}, 12(1):145--171, 1999.

\bibitem{BB14}
Jean Bourgain and Aynur Bulut.
\newblock Invariant {G}ibbs measure evolution for the radial nonlinear wave
  equation on the 3d ball.
\newblock {\em J. Funct. Anal.}, 266(4):2319--2340, 2014.

\bibitem{Bringmann18b}
Bjoern {Bringmann}.
\newblock {Almost sure local well-posedness for a derivative nonlinear wave
  equation}, September 2018, arXiv:1809.00220.

\bibitem{Bringmann18}
Bjoern {Bringmann}.
\newblock {Almost sure scattering for the radial energy critical nonlinear wave
  equation in three dimensions}, April 2018, arXiv:1804.09268.

\bibitem{BT08I}
Nicolas Burq and Nikolay Tzvetkov.
\newblock Random data {C}auchy theory for supercritical wave equations. {I}.
  {L}ocal theory.
\newblock {\em Invent. Math.}, 173(3):449--475, 2008.

\bibitem{BT08II}
Nicolas Burq and Nikolay Tzvetkov.
\newblock Random data {C}auchy theory for supercritical wave equations. {II}.
  {A} global existence result.
\newblock {\em Invent. Math.}, 173(3):477--496, 2008.

\bibitem{CCMNS18}
Sagun {Chanillo}, Magdalena {Czubak}, Dana {Mendelson}, Andrea {Nahmod}, and
  Gigliola {Staffilani}.
\newblock {Almost sure boundedness of iterates for derivative nonlinear wave
  equations}, October 2017, arXiv:1710.09346.

\bibitem{CCT03}
Michael {Christ}, James {Colliander}, and Terence {Tao}.
\newblock {Ill-posedness for nonlinear Schrodinger and wave equations},
  November 2003, arXiv:0311048.

\bibitem{CKSTT08}
James Colliander, Markus Keel, Gigliola Staffilani, Hideo Takaoka, and Terence
  Tao.
\newblock Global well-posedness and scattering for the energy-critical
  nonlinear {S}chr\"{o}dinger equation in {$\Bbb R^3$}.
\newblock {\em Ann. of Math. (2)}, 167(3):767--865, 2008.

\bibitem{Cordoba77}
Antonio Cordoba.
\newblock The {K}akeya maximal function and the spherical summation
  multipliers.
\newblock {\em Amer. J. Math.}, 99(1):1--22, 1977.

\bibitem{PD02}
Giuseppe Da~Prato and Arnaud Debussche.
\newblock Two-dimensional {N}avier-{S}tokes equations driven by a space-time
  white noise.
\newblock {\em J. Funct. Anal.}, 196(1):180--210, 2002.

\bibitem{Dodson12}
Benjamin Dodson.
\newblock Global well-posedness and scattering for the defocusing,
  {$L^{2}$}-critical nonlinear {S}chr\"{o}dinger equation when {$d\geq3$}.
\newblock {\em J. Amer. Math. Soc.}, 25(2):429--463, 2012.

\bibitem{Dodson16b}
Benjamin Dodson.
\newblock Global well-posedness and scattering for the defocusing, {$L^2$}
  critical, nonlinear {S}chr\"{o}dinger equation when {$d=1$}.
\newblock {\em Amer. J. Math.}, 138(2):531--569, 2016.

\bibitem{Dodson16}
Benjamin Dodson.
\newblock Global well-posedness and scattering for the defocusing,
  {$L^2$}-critical, nonlinear {S}chr\"{o}dinger equation when {$d=2$}.
\newblock {\em Duke Math. J.}, 165(18):3435--3516, 2016.

\bibitem{Dodson17}
Benjamin Dodson.
\newblock Global well-posedness and scattering for the defocusing,
  mass-critical generalized {K}d{V} equation.
\newblock {\em Ann. PDE}, 3(1):Art. 5, 35, 2017.

\bibitem{Dodson18}
Benjamin {Dodson}.
\newblock {Global well-posedness and scattering for the radial, defocusing,
  cubic nonlinear wave equation}, September 2018, arXiv:1809.08284.

\bibitem{DLM17}
Benjamin {Dodson}, Jonas {Lührmann}, and Dana {Mendelson}.
\newblock {Almost sure scattering for the 4D energy-critical defocusing
  nonlinear wave equation with radial data}, March 2017, arXiv:1703.09655.

\bibitem{DLM18}
Benjamin {Dodson}, Jonas {Lührmann}, and Dana {Mendelson}.
\newblock {Almost sure local well-posedness and scattering for the 4D cubic
  nonlinear Schrödinger equation}, February 2018, arXiv:1802.03795.

\bibitem{Fefferman73}
Charles Fefferman.
\newblock A note on spherical summation multipliers.
\newblock {\em Israel J. Math.}, 15:44--52, 1973.

\bibitem{Grillakis90}
Manoussos~G. Grillakis.
\newblock Regularity and asymptotic behaviour of the wave equation with a
  critical nonlinearity.
\newblock {\em Ann. of Math. (2)}, 132(3):485--509, 1990.

\bibitem{Grillakis92}
Manoussos~G. Grillakis.
\newblock Regularity for the wave equation with a critical nonlinearity.
\newblock {\em Comm. Pure Appl. Math.}, 45(6):749--774, 1992.

\bibitem{Guth16}
Larry Guth.
\newblock A restriction estimate using polynomial partitioning.
\newblock {\em J. Amer. Math. Soc.}, 29(2):371--413, 2016.

\bibitem{KT98}
Markus Keel and Terence Tao.
\newblock Endpoint {S}trichartz estimates.
\newblock {\em Amer. J. Math.}, 120(5):955--980, 1998.

\bibitem{KMV17}
Rowan {Killip}, Jason {Murphy}, and Monica {Visan}.
\newblock {Almost sure scattering for the energy-critical NLS with radial data
  below $H^1(\mathbb{R}^4)$}, July 2017, arXiv:1707.09051.

\bibitem{KTV08}
Rowan Killip, Terence Tao, and Monica Visan.
\newblock The cubic nonlinear {S}chr\"odinger equation in two dimensions with
  radial data.
\newblock {\em J. Eur. Math. Soc. (JEMS)}, 11(6):1203--1258, 2009.

\bibitem{KVZ08}
Rowan Killip, Monica Visan, and Xiaoyi Zhang.
\newblock The mass-critical nonlinear {S}chr\"{o}dinger equation with radial
  data in dimensions three and higher.
\newblock {\em Anal. PDE}, 1(2):229--266, 2008.

\bibitem{KT99}
Sergiu Klainerman and Daniel Tataru.
\newblock On the optimal local regularity for {Y}ang-{M}ills equations in
  {${\bf R}^{4+1}$}.
\newblock {\em J. Amer. Math. Soc.}, 12(1):93--116, 1999.

\bibitem{LL01}
Elliott~H. Lieb and Michael Loss.
\newblock {\em Analysis}, volume~14 of {\em Graduate Studies in Mathematics}.
\newblock American Mathematical Society, Providence, RI, second edition, 2001.

\bibitem{LM13}
Jonas L\"uhrmann and Dana Mendelson.
\newblock Random data {C}auchy theory for nonlinear wave equations of
  power-type on {$\Bbb {R}^3$}.
\newblock {\em Comm. Partial Differential Equations}, 39(12):2262--2283, 2014.

\bibitem{LM16}
Jonas L\"uhrmann and Dana Mendelson.
\newblock On the almost sure global well-posedness of energy sub-critical
  nonlinear wave equations on {$\Bbb R^3$}.
\newblock {\em New York J. Math.}, 22:209--227, 2016.

\bibitem{Murphy17}
Jason {Murphy}.
\newblock {Random data final-state problem for the mass-subcritical NLS in
  $L^2$}, March 2017, arXiv:1703.09849.

\bibitem{NORS12}
Andrea~R. Nahmod, Tadahiro Oh, Luc Rey-Bellet, and Gigliola Staffilani.
\newblock Invariant weighted {W}iener measures and almost sure global
  well-posedness for the periodic derivative {NLS}.
\newblock {\em J. Eur. Math. Soc. (JEMS)}, 14(4):1275--1330, 2012.

\bibitem{NPS13}
Andrea~R. Nahmod, Nata\v{s}a Pavlovi\'{c}, and Gigliola Staffilani.
\newblock Almost sure existence of global weak solutions for supercritical
  {N}avier-{S}tokes equations.
\newblock {\em SIAM J. Math. Anal.}, 45(6):3431--3452, 2013.

\bibitem{OP16}
Tadahiro Oh and Oana Pocovnicu.
\newblock Probabilistic global well-posedness of the energy-critical defocusing
  quintic nonlinear wave equation on {$\Bbb{R}^3$}.
\newblock {\em J. Math. Pures Appl. (9)}, 105(3):342--366, 2016.

\bibitem{Pocovnicu17}
Oana Pocovnicu.
\newblock Almost sure global well-posedness for the energy-critical defocusing
  nonlinear wave equation on {$\Bbb{R}^d$}, {$d=4$} and {$5$}.
\newblock {\em J. Eur. Math. Soc. (JEMS)}, 19(8):2521--2575, 2017.

\bibitem{Rauch81}
Jeffrey Rauch.
\newblock I. {T}he {$u^{5}$}\ {K}lein-{G}ordon equation. {II}. {A}nomalous
  singularities for semilinear wave equations.
\newblock In {\em Nonlinear partial differential equations and their
  applications. {C}oll\`ege de {F}rance {S}eminar, {V}ol. {I} ({P}aris,
  1978/1979)}, volume~53 of {\em Res. Notes in Math.}, pages 335--364. Pitman,
  Boston, Mass.-London, 1981.

\bibitem{VR07}
Eric Ryckman and Monica Visan.
\newblock Global well-posedness and scattering for the defocusing
  energy-critical nonlinear {S}chr\"{o}dinger equation in {$\Bbb R^{1+4}$}.
\newblock {\em Amer. J. Math.}, 129(1):1--60, 2007.

\bibitem{SS93}
Jalal Shatah and Michael Struwe.
\newblock Regularity results for nonlinear wave equations.
\newblock {\em Ann. of Math. (2)}, 138(3):503--518, 1993.

\bibitem{SS94}
Jalal Shatah and Michael Struwe.
\newblock Well-posedness in the energy space for semilinear wave equations with
  critical growth.
\newblock {\em Internat. Math. Res. Notices}, (7):303ff., approx.\ 7 pp.\,
  1994.

\bibitem{Strauss68}
Walter~A. Strauss.
\newblock Decay and asymptotics for {$ cmu=F(u)$}.
\newblock {\em J. Functional Analysis}, 2:409--457, 1968.

\bibitem{Struwe88}
Michael Struwe.
\newblock Globally regular solutions to the {$u^5$} {K}lein-{G}ordon equation.
\newblock {\em Ann. Scuola Norm. Sup. Pisa Cl. Sci. (4)}, 15(3):495--513
  (1989), 1988.

\bibitem{Tao04b}
Terence Tao.
\newblock Some recent progress on the restriction conjecture.
\newblock In {\em Fourier analysis and convexity}, Appl. Numer. Harmon. Anal.,
  pages 217--243. Birkh\"{a}user Boston, Boston, MA, 2004.

\bibitem{Tao06b}
Terence Tao.
\newblock Spacetime bounds for the energy-critical nonlinear wave equation in
  three spatial dimensions.
\newblock {\em Dyn. Partial Differ. Equ.}, 3(2):93--110, 2006.

\bibitem{VershyninRMT}
Roman Vershynin.
\newblock Introduction to the non-asymptotic analysis of random matrices.
\newblock In {\em Compressed sensing}, pages 210--268. Cambridge Univ. Press,
  Cambridge, 2012.

\bibitem{Vershynin}
Roman Vershynin.
\newblock {\em High-dimensional probability}, volume~47 of {\em Cambridge
  Series in Statistical and Probabilistic Mathematics}.
\newblock Cambridge University Press, Cambridge, 2018.
\newblock An introduction with applications in data science, With a foreword by
  Sara van de Geer.

\bibitem{Visan07}
Monica Visan.
\newblock The defocusing energy-critical nonlinear {S}chr\"{o}dinger equation
  in higher dimensions.
\newblock {\em Duke Math. J.}, 138(2):281--374, 2007.

\bibitem{Wolff99}
Thomas Wolff.
\newblock Recent work connected with the {K}akeya problem.
\newblock In {\em Prospects in mathematics ({P}rinceton, {NJ}, 1996)}, pages
  129--162. Amer. Math. Soc., Providence, RI, 1999.

\bibitem{Wolff01}
Thomas Wolff.
\newblock A sharp bilinear cone restriction estimate.
\newblock {\em Ann. of Math. (2)}, 153(3):661--698, 2001.

\end{thebibliography}
\bibliographystyle{hplain}
\end{document}